\renewcommand{\a}{\alpha}
\renewcommand{\b}{\beta}
\newcommand{\g}{\gamma}
\renewcommand{\d}{\delta}
\newcommand{\D}{\Delta}
\newcommand{\e}{\varepsilon}
\newcommand{\f}{\varphi}
\newcommand{\s}{\sigma}
\newcommand{\Si}{\Sigma}
\renewcommand{\k}{\kappa}
\renewcommand{\l}{\lambda}
\renewcommand{\t}{\theta}
\renewcommand{\O}{\Omega}
\newcommand{\cF}{{\mathcal F}}
\newcommand{\cC}{{\mathcal C}}
\newcommand{\cM}{{\mathcal M}}
\newcommand{\cT}{{\mathcal T}}
\newcommand{\cB}{{\mathcal B}}
\newcommand{\cL}{{\mathcal L}}
\newcommand{\cE}{{\mathcal E}}
\newcommand{\cU}{{\mathcal U}}
\newcommand{\cV}{{\mathcal V}}
\newcommand{\cN}{{\mathcal N}}
\newcommand{\cH}{{\mathcal H}}
\newcommand{\cD}{{\mathcal D}}
\newcommand{\cI}{\mathcal I} 
\newcommand{\cJ}{{\mathcal J}}
\newcommand{\bR}{\mathbb R}
\newcommand{\bB}{\mathbb B}
\newcommand{\bH}{\mathbb H}
\newcommand{\bN}{\mathbb N}
\newcommand{\bI}{\mathbb I} 
\newcommand{\Ric}{\mathrm{Ric}}
\newcommand{\be}{\begin{equation}}
\newcommand{\ee}{\end{equation}}
\newcommand{\bes}{\begin{equation*}}
\newcommand{\ees}{\end{equation*}}
\newcommand{\tr}{\mathrm{tr}}
\newcommand{\beaa}{\begin{eqnarray*}}
\newcommand{\bea}{\begin{eqnarray}}
\newcommand{\beal}[1]{\begin{eqnarray}\label{#1}}
\newcommand{\bean}{\begin{eqnarray}\nonumber}
\newcommand{\beadl}[1]{\begin{deqarr}\label{#1}}
\newcommand{\eeadl}[1]{\arrlabel{#1}\end{deqarr}}
\newcommand{\eeal}[1]{\label{#1}\end{eqnarray}}
\newcommand{\eead}[1]{\end{deqarr}}
\newcommand{\eea}{\end{eqnarray}}
\newcommand{\eeaa}{\end{eqnarray*}}
\newcommand{\p}{\partial}
\renewcommand{\to}{\rightarrow}
\renewcommand{\phi}{\varphi}
\renewcommand{\epsilon}{\varepsilon}
\renewcommand{\hat}{\widehat}
\renewcommand{\>}{\rangle}
\newcommand{\dm}{{\partial M}}
\newcommand{\w}{\widetilde}
\theoremstyle{plain}
\newtheorem{lemma}{Lemma}[section]
\newtheorem{proposition}[lemma]{Proposition}
\newtheorem{theorem}[lemma]{Theorem}
\newtheorem{Theorem}{Theorem}
\newtheorem{corollary}[lemma]{Corollary}
\theoremstyle{remark}
\theoremstyle{definition}
\newtheorem{remark}[lemma]{Remark}
\newtheorem{definition}[lemma]{Definition}
\newtheorem{Definition}[Theorem]{Definition}
\def\blacksquare{\hbox to .60em {\vrule width .60em height .60em}}
\numberwithin{equation}{section}
\begin{document}

\title[ ]{On the initial boundary value problem for the vacuum Einstein equations and geometric uniqueness}

\author{Zhongshan An and Michael T. Anderson}

\address{Department of Mathematics, 
University of Connecticut,
Storrs, CT 06269}
\email{zhongshan.an@uconn.edu}

\address{Department of Mathematics, 
Stony Brook University,
Stony Brook, NY 11794}
\email{michael.anderson@stonybrook.edu}

\thanks{MSC 2010: 35L53, 35Q76, 58J45, 83C05\\
Keywords: initial boundary value problems, hyperbolic systems, Einstein equations, energy estimates}

\begin{abstract}
We formulate an initial boundary value problem (IBVP) for the vacuum Einstein equations by describing the boundary conditions of a spacetime metric in its associated gauge. This gauge is determined, equivariantly with respect to diffeomorphisms, by the spacetime metric. The vacuum spacetime metric $g$ and its associated gauge $\phi_g$ are solved simultaneously in local harmonic coordinates. Further we show that vacuum spacetimes satisfying fixed initial-boundary conditions and corner conditions are geometrically unique near the initial surface. Finally, in analogy to the solution of the Cauchy problem, we also construct a unique maximal globally hyperbolic solution of the IBVP.
\end{abstract}

\maketitle

\section{Introduction}

This article is concerned with the initial boundary value problem (IBVP) for the vacuum Einstein equations on a spacetime $M$ with boundary of the form $M = I\times S$, where $I=[0,1)$ and $S$ is a compact 3-manifold with non-empty boundary $\p S = \Si$. 
 The boundary $\p M$ of $M$ consists of two parts: the initial surface $S\cong \{0\}\times S$ and hypersurface $\cC \cong I\times \Si$.  These 3-manifolds are glued along their common boundary $\Si$ giving $M$ the structure of a manifold with corner.  We are interested in Lorentz metrics $g$ on $M$ such that the initial surface $S$ is spacelike and the hypersurface $\cC$ is timelike; such spacetimes $(M,g)$ are called \textit{ST-spacetimes} in \cite{Friedrich:2009}. 
The IBVP is the problem of finding ST-spacetimes $(M,g)$ satisfying the vacuum Einstein equations 
\be \label{vac}
\Ric_g = 0,
\ee
together with prescribed initial conditions along $S$ and boundary conditions along $\cC$. Throughout the paper we will use $\cT$ to denote a connected open subset in $M$, with $\{x \in M: t(x) \leq \tau\}\subset\cT$ for {some time function $t$ on $M$ and some $\tau>0$}. So the boundary $\p \cT$ contains the entire initial surface $S$ and  a portion $\cC\cap \cT$ of $\cC$. We will refer to such a subset $\cT$ as \textit{an ST-neighborhood (of the initial surface)}.

\subsection{Well-posed IBVP}
To place the problem in perspective, recall that the Cauchy problem for the equation \eqref{vac} has been well-understood since the 
fundamental work of Choquet-Bruhat \cite{Choquet-Bruhat:1952} and has been extensively studied in the literature, cf.~\cite{Choquet-Bruhat:2009}, \cite{Friedrich-Rendall:2000}, \cite{Hawking-Ellis:1973} and 
\cite{Ringstrom:2009} for example. 
The initial data $(\g, \k)$ on $S$ consists of a Riemannian metric $\g$ and symmetric bilinear form $\k$ satisfying 
the vacuum Einstein constraint equations, i.e. 
the Hamiltonian and momentum (or Gauss and Gauss-Codazzi) constraint equations: 
\be \label{constraint}
|\k|^2 - (\tr \k)^2 - R_{\g} = 0,\quad
{\rm div}[\k - (\tr \k)\g] = 0\ \ \mbox{ on }S.
\ee
Here $R_{\g}$ denotes the scalar curvature of the Riemannian metric $\g$ on $S$, while the norm $|.|$, trace $\tr$ and divergence {${\rm div}$} are all with respect to $\g$. It is well-known that given such smooth data $(\g,\k)$ on $S$, there exists a smooth globally hyperbolic vacuum spacetime $(V,g)$ such that the Riemannian metric $g_{S}$ and the second fundamental form $K_{g|S}$, both induced by $g$ on the initial surface $S\subset (V,g)$, satisfy $g_{S}=\g,~K_{g|S}=\k$ on $S$. Furthermore, if there is another vacuum spacetime $(V',g')$ inducing the same initial data $(g'_S,K_{g'|S})=(\g,\k)$ on $S$, then $(V,g)$ and $(V',g')$ must be isometric in a neighborhood of $S$.
Existence and Cauchy stability of solutions of \eqref{vac} with given initial data are proved by means of suitable choices of gauge (e.g.~suitable choices of local coordinates or space-time foliations of the spacetime). Such gauge choices are necessary to reduce the Einstein equations to be strictly hyperbolic in view of the invariance of solutions of \eqref{vac} under diffeomorphisms. Uniqueness of solutions (up to local isometry) is then proved by exploiting the geometric nature of the initial data. This allows one to patch together locally defined solutions (not necessarily given apriori in the same gauge) to obtain more global solutions. This leads to the existence of a unique (up to isometry) maximal globally hyperbolic 
solution to the Cauchy problem proved by Choquet-Bruhat and Geroch \cite{Choquet-Geroch:1969}.

In analogy to the Cauchy problem above, throughout the paper, we use the following definition of a well-posed IBVP for the vacuum Einstein equations:
\begin{Definition}\label{well-posed}
An initial boundary value problem for the vacuum Einstein equations on $M$ is a system for the spacetime metric $g$ given by 
\be\label{ibvp}
\Ric_g=0 \ \mbox{ on }M, \ \ g_{S}=\g,~K_{g|S}=\k \ \mbox{ on }S, \ \ \mathcal B(g)=b \ \mbox{ on }\cC,
\ee
where $\mathcal B(g)$ denotes a collection (to be chosen) of geometric quantities of $g$ evaluated along $\cC$. Such an IBVP is called (smoothly) \textit{well-posed} if for any smooth initial data $(\g,\k)$ (satisfying the vacuum constraints) and smooth boundary data $b$, satisfying smooth compatibility conditions at the corner $\Sigma$, there exists a smooth Lorentz metric $g$ solving the system above.  Moreover, if $g'$ is another solution, then $(M,g)$ and $(M,g')$ are isometric in some ST-neighborhood $\cT$. 
In addition, up to such local isometries, solutions $g$ depend continuously on the initial and boundary data.  
\end{Definition}

It needs to be pointed out that the uniqueness of solutions up to isometry above does not mean geometric uniqueness in the most ideal case, mainly due to the complexity in describing the boundary geometry $\mathcal B(g)$. We refer to the next two subsections for a detailed discussion.

\medskip 

The IBVP for \eqref{vac} has been well understood in the case of asymptotically locally anti-de-Sitter spacetimes, where the boundary is at infinity. Namely, Friedrich \cite{Friedrich:1995} has shown that geometric boundary data consisting of the conformal class of the metric at conformal infinity admits a well-posed IBVP, cf.~also \cite{Enciso-Kamran:2019} for similar results in higher dimensions. However, it is well-known that the situation where the boundary is at finite distance is much more difficult to understand than that of a boundary at conformal infinity. This is already apparent in the simpler, but formally analogous, situation of Riemannian Einstein metrics, cf.~\cite{Anderson:2008}; in particular, in the Riemannian setting boundary data consisting of the boundary metric $g_\cC$ alone cannot lead to a well-posed IBVP at a finite boundary in general. Henceforth, we will only be concerned with the finite IBVP, where the Cauchy surface $S$ is compact, with non-degenerate induced metric on $\Sigma$. 
\medskip

A number of distinct approaches to the finite IBVP for the equation \eqref{vac} have been developed. The IBVP was first seriously investigated and proved to have a well-posed gauge-dependent solution by Friedrich-Nagy in \cite{Friedrich-Nagy:1999}. This approach takes the basic unknowns as an orthonormal tetrad of the metric, associated connection coefficients and Weyl curvature components. 
They extract a symmetric hyperbolic system for these unknowns from the Einstein equations by assuming the so-called \textit{adapted gauge}. To fix an adapted gauge, one needs to choose apriori gauge source fields $f, F^A (A=1,2)$ in a neighborhood of $\cC$. 
One main step of their work is to prove that a solution of the reduced hyperbolic system must solve the original Einstein equations and satisfy the adapted gauge condition simultaneously. The boundary geometry $\mathcal B(g)$ of the spacetime
consists of the mean curvature $H_{\mathcal C}$ of $\cC$, and a tensor field built upon the coordinate components of the Weyl tensor expressed in the boundary frame determined by $F^A$.  
By solving the reduced system, they obtain a solution $g$ to \eqref{ibvp} in the adapted gauge. Furthermore, if $g'$ is another solution (not necessarily in the adapted gauge), then $g'$ is isometric to $g$ in an ST-neighborhood. Such a uniqueness result was first proved in the case $H_{\mathcal C} = f=const, F^A=0$, and later shown in the general case in \cite{Friedrich:2021}. We note that here the function $f$ is both treated as a gauge source function and as geometric data (mean curvature) describing the timelike boundary. 

In \cite{KRSW:2009}, cf.~also \cite{KRSW:2007}, Kreiss-Reula-Sarbach-Winicour treat the IBVP for \eqref{vac} in a harmonic gauge, analogous to the gauge most often used for the Cauchy problem.  Under the harmonic gauge, the vacuum equation \eqref{vac} is reduced to a hyperbolic system of wave equations for $g$. By imposing appropriate gauge conditions on the boundary $\cC$, it can be shown that solutions to the reduced system must solve both the original Einstein equation and satisfy the harmonic gauge condition. The boundary geometry $\mathcal B(g)$ of the spacetime is described by a collection of Sommerfeld-type boundary equations for the metric. Again, existence of a solution $g$ to \eqref{ibvp} in the harmonic gauge is proved by solving the reduced hyperbolic system. However, the uniqueness part is not clear --  it is not known whether other possible solutions must be isometric to such constructed $g$. The main reason for the failure of uniqueness is that the boundary equations in $\mathcal B(g)$ are expressed in a non-canonical frame of the spacetime on $\cC$.

In this paper, we develop a new approach to construct a well-posed IBVP \eqref{ibvp}, where the existence of solutions is proved in a similar way as in \cite{KRSW:2009} using (local) harmonic charts; the uniqueness result is achieved by describing the boundary geometry $\mathcal B(g)$ in a preferred frame -- referred to as the \textit{associated gauge} $\phi_g$ near $\cC$, which is determined by the spacetime $(M,g)$ and a gauge source field $\Theta_\cC$. {The way the uniqueness issue is resolved is, to a certain  extent, similar to that in \cite{Friedrich-Nagy:1999}; the key is the method of describing the boundary geometry. However, while the boundary frame in \cite{Friedrich-Nagy:1999} is deeply involved with the adapted gauge in the process of reducing the Einstein equations, in our work the associated gauge $\phi_g$ is geometric and independent of the local harmonic gauge imposed on the Einstein equations. }

The approach is to associate to each spacetime $(M,g)$ a natural wave map {(preferred frame)} $\f_g$ which, with {an appropriate} choice of initial-boundary data for $\f_g$, is uniquely determined by $g$. Moreover, this association will satisfy a natural and crucial equivariance property \eqref{equif} below. We then use natural boundary data for the metric $g$ in the {frame} $\f_g$ to describe the boundary conditions for $g$ in the IBVP. 

We first describe the construction of the associated gauge in detail. Given a compact, connected 3-manifold $S$ with nonempty boundary $\p S=\Si$ as above, let $M_0 = \bR_0^+ \times S$ be the standard product space equipped with a fixed global time function $t_0$ on $\bR_0^+ = [0,+\infty)$. Let $S_0=\{0\}\times S$ be the initial surface naturally identified with $S$, and $\cC_0 = \bR_0^+\times \Si$ be the associated portion of $\dm_0$, i.e.~$\dm_0 = S_0 \cup \cC_0$ with corner $\Si_0=\{0\}\times\Si$.  
Next, choose a fixed background smooth complete Riemannian metric $g_R$ on an open extension of $M_0$. As an example, when $\Si_0 = \p S_0$ is embedded in $\bR^3$ as the boundary of a handlebody $S_0$, one may choose $g_R$ to be the flat Euclidean metric on $\bR^4$. Let $r_0$ denote the distance function to $\cC_0$ in $M_0$ with respect to the Riemannian metric $g_R$. In the following we will consider wave maps from (subdomains of) an ST-spacetime $(M,g)$ to the target space $(M_0,g_R)$, equipped with the standard functions $t_0,r_0$.  The main results described below do not depend on these choices of $g_R, t_0, r_0$.

Suppose $g$ is a spacetime metric defined on a ST-neighborhood $\cT$ in $M$, with timelike boundary $\cC\cap\cT$. Consider a wave map 
$$\phi_g:(\cU,g) \to (M_0,g_R).$$
Here $\cU$ is an open neighborhood of the corner $\Sigma$ in $\cT$, which contains the entire corner $\Si$ and admits spacelike initial surface $S\cap\cU$ and timelike boundary $\cC\cap\cU$. {We will refer to such neighborhood $\cU$ as an \textit{ST-corner neighborhood}}. To describe the boundary geometry, it is sufficient to construct a preferred frame in an ST-corner neighborhood. The wave map equation for $\phi_g$ is given by 
\be \label{wave_map}
\Box_g \phi_g + \Gamma_{g_R}(\phi_g)g(\nabla \phi_g, \nabla \phi_g)= 0 \ \mbox{ in }\cU;
\ee
we refer to \S 2.1 for a detailed discussion of \eqref{wave_map}. Given a fixed Lorentz metric $g$, \eqref{wave_map} is a system of semi-linear hyperbolic wave equations for $\phi_g$. To obtain a unique wave map $\f_g$ associated to $g$, we impose natural initial conditions on $S\cap\cU$ and boundary conditions on $\cC\cap\cU$ for the hyperbolic {system} \eqref{wave_map}. 

We first discuss the initial conditions, which play the main role in establishing the equivariance property \eqref{equif} below. The initial data are chosen using a slice to the action of diffeomorphisms on the space of metrics on the surface $\Si$ given by the uniformization theorem. 
For $\Si = S^2$, fix three distinct points $p_i \in S^2$ (to break the action of the conformal group of $S^2$). For $\Si = T^2$, fix one point $p \in T^2$ (to break the action of translations). For $\Si$ of higher genus, such base points are not needed. Let $\mathrm{Diff}'(\Si)$ be the group of diffeomorphisms of $\Si$, isotopic to the identity, fixing $\{p_i\}$ in the case of $S^2$ and fixing $p$ in the case of $T^2$. 
By the uniformization theorem for surfaces, for any metric $\s$ on $\Si$, there is a unique diffeomorphism $\Phi_{\s} \in \mathrm{Diff}'(\Si)$ such that the pullback metric $(\Phi_{\s}^{-1})^*\s$ is pointwise conformal to a space-form metric $\s_0$ on $\Si$, i.e. $\s = \Phi_{\s}^*(\l^2 \s_0)$ for some function $\l$ on $\Si$. Here $\s_0$ is the round metric on the unit sphere for $\Si = S^2$, while $\s_0$ is a quotient of the Euclidean plane $\bR^2$ or hyperbolic plane $\bH^2$ by a lattice in case $\Si$ has positive genus. In suitable function space topologies, the transverse slice mapping 
$\s \to \Phi_{\s}$
is smooth. 

For an arbitrary Riemannian metric $\g$ defined on $S$, let $\g_\Si$ denote the induced metric on the boundary $\Si\subset (S,\g)$. By the analysis above, there is a unique diffeomorphism $\Phi_{\g_\Si}\in {\rm Diff}'(\Si)$ associated to $\g_\Si$. Since $\Si$ is naturally identified with the corner $\Si_0$ of $M_0$, we regard $\Phi_{\g_\Si}$ as a diffeomorphism from $\Si$ to $\Si_0$. Let $n_{\g}$ be the field of inward unit normal vectors to the equidistant foliations from $\Si$ in $(S,\g)$; $n_{\g}$ is well-defined in a collar neighborhood $W$ of $\Si$ in $S$. In the target spacetime $(M_0,g_R)$, let $\g_R$ denote the induced metric on $S_0$, then we can define the vector field $n_{\g_R}$ similarly on $S_0$. To each Riemannian metric $\g$ on $S$ we then assign a map $E_{\g}: W \to S_0$, defined by
\be \label{c1}
E_{\g} = \Phi_{\g_\Si} \ \ \mbox{ on }\Si, \quad\quad (E_{\g})_*(n_{\g}) = n_{\g_R}\ \ \mbox{ on }W.
\ee
Geometrically, $E_{\g}$ maps $\Si$ to $\Si_0$ and maps equidistance surfaces from $\Si$ to equidistance surfaces from $\Si_0$. Clearly $E_{\g}$ is a diffeomorphism onto its image in $S_0$ and note that $E_{\g}$ is uniquely determined and depends smoothly on the Riemannian metric $\g$ on $S$.
The choice of the defining equation \eqref{c1} is not unique. The main reason for the choice \eqref{c1} is that it satisfies the following equivariance property: Let ${\rm Diff}'(S)$ denote the diffeomorphism group
\be\label{diffS}
\mathrm{Diff}'(S):= \{\psi \in \mathrm{Diff}(S): \psi |_{\Si} \in \mathrm{Diff}'(\Si)\},
\ee
then for any Riemannian metric $\g$ and diffeomorphism $\psi \in \mathrm{Diff}'(S)$, the map $E_{\psi^*\g}$ generated by the pullback metric $\psi^*\g$ is naturally related to $E_{\g}$ by
\be \label{equi}
E_{\psi^*\g} 
= E_{\g} \circ \psi.
\ee
The results below hold for any choice of $E_{\g}$ satisfying \eqref{equi}.

Given an ST neighborhood $(\cT,g)$, let $g_S$ denote the induced Riemannian metric on the initial surface $S\subset (\cT,g)$. {So $g_S$ generates a map $E_{g_S}$ as in the above. We will choose an ST-corner neighborhood $\cU$ in $\cT$ so that} $E_{g_S}$ is well defined on $S\cap\cU$ and set it as the initial value for $\phi_g$, i.e. $\phi_g|_S=E_{g_S}$, and in addition we prescribe the 1-jet of $\phi_g$ by requiring
\be\label{c2}
(\phi_g)_*(N_g) = T_{g_R}\ \ \mbox{ on }S\cap\cU.
\ee
Recall that the boundary $\cC\cap\cT$ is timelike in $(\cT, g)$, so the induced metric $g_\cC$ is Lorentzian. Let $N_g$ be the future pointing unit timelike normal to the corner $\Si$ with respect to the ambient Lorentzian space $(\cC\cap\cT, g_\cC)$. Then we extend $N_g$ to be defined on $S\cap \cU$ by parallel translation along the flow of $n_{g_S}$. 
In the target spacetime, let $T_{g_R}$ denote the future pointing unit normal vector to the initial surface $S_0\subset (M_0,g_R)$. For convenience, here and in the following, we assume that $S_0$ is perpendicular to $\cC_0$ in $(M_0,g_R)$ along the corner $\Si_0$, so $T_{g_R}$ is tangent to the boundary surface $\cC_0$ along $\Si_0$. These provide the initial data for $\phi_g$.

Next we turn to the boundary conditions for \eqref{wave_map}. We first require that $\phi_g$ maps the boundary $\cC\cap\cU$ into $\cC_0$, i.e.~$r_0\circ\phi_g=0$ on $\cC\cap\cU$. Let $\Si_\tau$ be the level set $\Si_\tau = \{p\in\cC_0: t_0(p) = \tau\}$ in $\cC_0$. It naturally gives rise to the level set $\phi_g^{-1}(\Si_\tau)$ in $\cC\cap \cU$. Define $T_g^c$ on $\cC\cap\cU$ as the future pointing timelike unit normal vector to every level set $\phi_g^{-1}(\Si_\tau)$ in $\cC\cap\cU$ with respect to the induced metric $g_\cC$, and let $\nu_g$ be the outward unit normal vector to the boundary $(\cC\cap\cU)\subset (\cU,g)$. Then we impose the condition 
\begin{equation*}
[(\f_g)_*(T^c_g+\nu_g)]^T=\Theta_{\cC} \ \  \mbox{ on } \cC\cap\cU,
\end{equation*}
where $\Theta_{\cC}$ is a fixed, arbitrarily prescribed smooth vector field on $\cC_0$. Here the notation $[\cdot]^T$ denotes orthogonal projection of a vector field onto the boundary $\cC_0$ with respect to $g_R$ (cf. equation \eqref{prj}). This provides three Sommerfeld-type boundary conditions for $\phi_g$ (cf.~ \S 6.1).

By means of the discussion above, the associated gauge $\f_g$ to a spacetime metric $g$ is defined as follows.
\begin{Definition}\label{asso_wave}
Suppose $(\cT,g)$ is an ST- neighborhood 
and let $\cU$ be an ST-corner neighborhood in $ \cT$. Let $\Theta_\cC$ be a non-vanishing vector field on $\cC_0$. 
{The \textit{associated gauge} $\phi_g$ to $g$ in $\cU$, with respect to the gauge source $\Theta_\cC$, is defined as the wave map $\f_g:\cU\to M_0$
which is the unique solution to the system: }
\be \label{mainf3}
\begin{split}
\Box_g \f_g + \Gamma_{g_R}(\f_g)g(\nabla \f_g, \nabla \f_g)= 0
\quad\text{on }\cU
\end{split}
\end{equation}
\be \label{if3}
\begin{split}
\phi = E_{g_S},~\phi_*(N_g) = T_{g_R}
\quad\text{on }S\cap\cU
\end{split}
\end{equation}
\be \label{bf3}
\begin{split}
r_0 \circ \f_g = 0,\ \ 
[(\f_g)_*(T^c_g + \nu_g)]^T = \Theta_{\cC}
\quad\text{on }\cC \cap \cU.
\end{split}
\end{equation}

\end{Definition}

{Basic analysis from the theory of hyperbolic systems shows that for a fixed choice of $\Theta_{\cC}$, any spacetime $(\cT,g)$ satisfying certain compatibility condition along the corner $\Si$ (cf.~Proposition \ref{coner_cmpt}) admits a unique associated gauge $\f_g$ defined on some open neighborhood $\cU$.} Further the mapping $g \to \f_g$ is smooth. Equally importantly, the equivariance property \eqref{equi} implies that $\f_g$ also transforms equivariantly, i.e. the associated gauge $\phi_{\Psi^* g}$ for the pull back metric $\Psi^*g$ is related to $\phi_g$ by
\be \label{equif}
\f_{\Psi^*g} = \f_g \circ \Psi,
\ee
for any diffeomorphism $\Psi \in \mathrm{Diff}'(\cU) = \{\Psi \in \mathrm{Diff}(\cU)\big| \Psi:S\cap\cU\to S\cap\cU,\Psi:\cC\cap \cU\to\cC\cap \cU\mbox{ and }\Psi|_{\Si} \in \mathrm{Diff}'(\Si)\}$. 

\medskip

With the associated gauge $\phi_g$ well-established, we are ready to describe 
geometric boundary conditions for the metric $g$. We first describe the boundary data for Lorentz metrics $\mathring{g}$ on the target space $M_0$; the main case of interest will be $\mathring{g}$ of the form $\mathring{g} = (\f_g^{-1})^*g$ determined by an ST-corner neighborhood $(\cU,g)$. 
Recall that $\Si_{\tau}$ is the $\tau$-level set of the time function $t_0$ on $\cC_0$. For any $p\in\cC_0$, there is a surface $\Si_\tau$ containing $p$ and a {2-dimensional} metric $\mathring{g}_{\Si_\tau}$ induced by $\mathring{g}$ {on $\Si_\tau$}. Let $[\mathring{g}^\intercal]$ be the field on $\cC_0$ which, when evaluated at $p\in\cC_0$, equals to the pointwise conformal class of $\mathring{g}_{\Si_\tau}$ {at $p$}. In other words, we can understand $[\mathring{g}^\intercal]$ as the conformal invariant tensor field 
\bes
[\mathring{g}^\intercal]=\det (\mathring{g}_{\Si_\tau})^{-1/2} \mathring{g}_{\Si_\tau} \ \ \mbox{ on }\cC_0.
\ees
Next, let $S_\tau$ be the level set $\{x\in M_0: t_0(x)=\tau\}$ in $M_0$. Also let $K_{\mathring{g}|S_\tau}$ denote the second fundamental form of $S_\tau\subset (M_0,\mathring{g})$, and $K_{\mathring{g}|\cC_0}$ denote that of $\cC_0\subset(M_0,\mathring{g})$. Define the function $H_{\mathring{g}}$ on $\cC_0$ as
\be \label{Hintro}
H_{\mathring{g}} = a \tr_{\mathring{g}_{\Si_\tau}}K_{\mathring{g}|S_\tau} + b \tr_{\mathring{g}_{\cC_0}}K_{\mathring{g}|\cC_0} + c \tr_{\mathring{g}_{\Si_\tau}}K_{\mathring{g}|\cC_0}.
\ee
Here $\tr_{\mathring{g}_{\cC_0}}$ denotes the trace with respect to the induced metric $\mathring{g}_{\cC_0}$ on $\cC_0\subset(M_0,\mathring{g})$; and $\tr_{\mathring{g}_{\Si_\tau}}$ denotes the trace with respect to the induced metric $\mathring{g}_{\Si_\tau}$ on $\Si_\tau=S_\tau\cap\cC_0$; the coefficients $a, b, c$ are freely specifiable with a certain range, cf.~\eqref{h2n} and Proposition \ref{energy2} for exact details. 
Given an ST-corner neighborhood $(\cU,g)$ with the associated gauge $\phi_g$, we will consider the pull-back metric $(\f_g^{-1})^*g$ and the boundary geometry described as above when $\mathring{g} = (\f_g^{-1})^*g$, i.e.
\be\label{bg0}
\cB(g)=\big(~[ \big((\phi_g^{-1})^*g\big)^\intercal],~ H_{(\phi_g^{-1})^*g}~\big)~\mbox{ on }\cC_0\cap \phi_g(\cU).
\ee
Note that $\f_g$ may be a diffeomorphism only well-defined in an ST-corner neighborhood $\cU\subset\cT$. This is sufficient to provide a foliation of $\cT$ near $\cC$ in which to describe the boundary geometry of $g$. Thus $\f_g$ should be understood as a preferred frame (depending on the choice of $\Theta_{\cC}$) near the boundary assigned in a unique way to each metric $g$ in which one computes the boundary quantities \eqref{bg0}. 

With this understood, the first main result of the paper is that the IBVP \eqref{ibvp} for the vacuum Einstein equations with $\cB(g)$ given by \eqref{bg0} is well-posed. 

\begin{Theorem}\label{exist}(Well-posed IBVP)
For {any smooth vector field $\Theta_\cC$ on $\cC_0$}, smooth initial data $(\g,\k)$ satisfying the vacuum constraint equations \eqref{constraint} on $S$, and smooth boundary data $([\s], H)$ on $\cC_0$, all satisfying smooth compatibility conditions at the corner $\Si$, there exists an ST-neighborhood $\cT\subset M$ 
and a smooth spacetime metric $g$  on $\cT$ such that 
\be \label{maing3}
\begin{split}
\Ric_g = 0 
\quad\text{in }\cT
\end{split}
\end{equation}
\be \label{ig3}
\begin{split}
g_S = \g,~K_{g|_S} = \k
\quad\text{on }S
\end{split}
\end{equation}
\be \label{bg3}
\begin{split}
[\big((\phi_g^{-1})^*g\big)^\intercal]= [\s],\ \ H_{(\phi_g^{-1})^*g}=H
\quad\text{on }\cC_0 \cap \phi_g(\cU).
\end{split}
\end{equation}
In the above $\f_g$ is the unique associated gauge to $g$ on some ST-corner neighborhood $\cU$ 
in $\cT$ (with respect to the gauge source $\Theta_\cC$).
Further, $\cT$ equals to the domain of dependence of the boundary $\p\cT=S\cup (\cC\cap\cT)$ in $(\cT,g)$.

In addition, if $(\cT',g')$ is another solution of \eqref{maing3}-\eqref{bg3}, then it must be isometric to $(\cT,g)$ in some ST-neighborhood. {Finally, up to such isometries, solutions $g$ depend continuously on the data $\Theta_{\cC}$, $(\g, \k)$ and $([\s], H)$. }
\end{Theorem}
{Here smooth boundary data $([\s],H)$ on $\cC_0$ consists of a smooth function $H$ on $\cC_0$ and a field of conformal metrics $[\s]$ determined by $\s$, where $\s=\s(\tau)$ is a smooth family of 2-dim 
metrics with $\s(\tau)$ defining a metric on the level set $\Si_\tau$ of $\cC_0$ for each $\tau$. So the first equation in \eqref{bg3} means that the metric on $\Si_\tau$ induced from $(\phi_g^{-1})^*g$ is pointwisely conformal to $\s(\tau)$ for every $\tau$ that is contained in $\phi_g(\cU)\cap \cC_0$. }
We refer to \S 5 for the detailed compatibility conditions for the choice of $\Theta_\cC$ and initial-boundary data.

\subsection{Geometric uniqueness} 
As mentioned previously, the uniqueness in Definition \ref{well-posed} for the IBVP is different from the most general geometric uniqueness result in the Cauchy problem. Let $(V,g)$ denote a {Cauchy development} of some initial data $(\g,\k)$ on $S$, i.e.~a globally hyperbolic vacuum spacetime $V$ containing the initial data set $(S, \g,\k)$. Two Cauchy developments $(V_1, g_2)$, $(V_2, g_2)$ are called equivalent if they contain a common subdevelopment, i.e.~there exists a Cauchy development $(V, g)$ and isometric embeddings $\Psi_i: V \to V_i$ such that $\Psi_i^*g_i = g~(i=1,2)$. Let $\cV$ be the space of equivalence classes of Cauchy developments; in particular, isometric solutions belong to the same equivalence class. 
Similarly, we can define the space $\cI$ of equivalence classes of initial data $(\g,\k)$, where two initial data $(\g_1,\k_1),(\g_2,\k_2)$ are equivalent if there is a diffeomorphism $\psi$ of $S$ such that $\g_1=\psi^*\g_2,\k_1=\psi^*\k_2$. The geometric uniqueness result of the Cauchy problem implies that if two initial data sets are equivalent, then their Cauchy developments are also equivalent, i.e. there is a bijective correspondence  {(in fact a homeomorphism)}
\be \label{mod1}
D: \cV \to \cI.
\ee 
The map $D$ in \eqref{mod1} is canonical (there is only one natural choice). However, to construct an explicit inverse to $D$, i.e.~to construct a representative $(V, g)$ of the equivalence class $\{(V,g)\}$ over $(\g,\k) \in \{(\g, \k)\}$ in $\cI$, requires solving the equation \eqref{vac} which in turn requires a choice of gauge. Thus a map $D^{-1}$ at the level of representatives of an equivalence class is gauge-dependent. The geometric uniqueness of solutions shows this dependence disappears when passing to equivalence (or local isometry) classes. 

The bijection \eqref{mod1} gives an effective parametrization of the space of solutions of \eqref{vac} by their initial data. Of course the most natural abstract representative for an equivalence class is the unique (up to isometry) maximal globally hyperbolic solution to the Cauchy problem. 

Ideally, one would like to obtain similar results and a similar understanding for the IBVP for the equation \eqref{vac}, and so in particular obtain a bijective correspondence 
\be \label{mod2}
D: \cM \to \cI \times_{c} \cB,
\ee 
where $\cM$ denotes the space of equivalence classes of ST-spacetimes. 
As above, $\cI$ is the moduli space of initial data on $S$ and $\cB$ is a space of boundary data on $\cC$ modulo some (to be determined) action of $\mathrm{Diff}(\cC)$. The subscript $c$ denotes compatibility conditions between the initial and boundary data at the corner $\Si$. This suggests that one starts with a well-posed IBVP \eqref{ibvp} and try to define equivalence  relations for the boundary geometry $\mathcal B(g)$.  Again ideally, the boundary term $\mathcal B(g)$ would also be determined from the ${\rm Diff}_0(M)$-invariant Cauchy data at $\cC$, i.e.~the induced metric $g_\cC$ and second fundamental form $K_{g|\cC}$ of $\cC$ in $(M, g)$. It remains a basic open problem (not answered here) of whether there is a choice of gauge for which the IBVP is well-posed for some choice of such geometric boundary data and for which \eqref{mod2} holds. 

The next main result is that the well-posed IBVP for the vacuum Einstein equations \eqref{maing3}-\eqref{bg3} in Theorem \ref{exist} possesses certain geometric uniqueness property,  relevant to establishing a parametrization of $\cM$ as in \eqref{mod2}.

In the following and throughout the paper, ${\rm Diff}(M)$ denotes the group of diffeomorphisms on $M$ which induce diffeomorphisms $S \to S$ and $\cC \to \cC$; the restricted gauge group ${\rm Diff}_0(M)$ consists of diffeomorphisms which restrict to the identity on the boundary $\p M=S \cup \cC$. We also recall ${\rm Diff}'(S)$ defined in \eqref{diffS}.
We will use $({\mathbb I},\bB)$ to denote the free initial-boundary data in 
\eqref{ig3}-\eqref{bg3}:
\be\label{ibg3}
{\mathbb I}=(\g,\k),~\bB=([\s],H).
\ee
\begin{Definition}\label{equiv_IB}
Two sets $(\mathbb{I}_1,\mathbb{B}_1)=(\g_1,\k_1,[\s_1],H_1)$ and $(\mathbb{I}_2,\mathbb{B}_2)=(\g_2,\k_2,[\s_2],H_2)$ are called \textit{equivalent} if there exists a diffeomorphism $\psi\in{\rm Diff}'(S)$ such that 
\begin{equation}\label{equivI}
(\g_1,\k_1)=(\psi^*\g_2,\psi^*\k_2)\ \ \mbox{on }S,
\end{equation}
and a subdomain $\cC_{0\tau}=\{x\in\cC_0:t_0(x)<\tau\}~(\tau>0)$ in $\cC_0$ such that 
\begin{equation}\label{equivB}
([\s_1],H_1)=([\s_2],H_2)\ \ \mbox{on }\cC_{0\tau}.
\end{equation}
\end{Definition}
Based on the construction, it is obvious that if $g_1,g_2$ are two isometric vacuum spacetime metrics on $M$, i.e. $\Psi^*g_2=g_1$ for some $\Psi\in {\rm Diff}'(M)=\{\Psi\in {\rm Diff}(M):\Psi|_S\in{\rm Diff}'(S)\}$, then they must satisfy the system \eqref{maing3}-\eqref{bg3} with equivalent initial-boundary data, where their associated gauge are with respect to a fixed source $\Theta_\cC$. 

Note that the full diffeomorphism group ${\rm Diff}(M)$ acts trivially on the boundary data space $\bB$; thus, diffeomorphisms act simultaneously (and inversely) on the metric $g$ and the associated gauge $\f_g$. Among the 6 degrees of freedom one would expect to prescribe for boundary geometry of $g$, 3 correspond to the choice of $\Theta_{\cC}$ (a gauge choice) and 3 correspond to data in $\bB$. The latter accounts for the 2 degrees of freedom of the gravitational field while 1 (for example the second condition in \eqref{bg3}) accounts for the evolution of the boundary $\cC$ off the corner $\Si$.

Conversely, we prove the equivalence class of ST-spacetimes solving the IBVP \eqref{maing3}-\eqref{bg3} is determined by the initial-boundary data:

\begin{Theorem}\label{geom_unique}(Geometric Uniqueness)
Let $(\cT_1, g_1)$ and $(\cT_2, g_2)$ be two solutions of the system \eqref{maing3}-\eqref{bg3} with respect to the initial-boundary data $(\mathbb{I}_1,\mathbb{B}_1)$ and $(\mathbb{I}_2,\mathbb{B}_2)$, {where $\phi_{g_1},\phi_{g_2}$ are the associated gauge to $g_1, g_2$ (with respect to a fixed gauge source $\Theta_\cC$).} If $(\mathbb{I}_1,\mathbb{B}_1)$ and $(\mathbb{I}_2,\mathbb{B}_2)$ are equivalent and related via $\psi$ {as in \eqref{equivI}-\eqref{equivB},} then there are ST-neighborhoods $\cT'_i \subset \cT_i$ ($i=1,2$), such that 
$$\Psi^*g_2=g_1$$
for some diffeomorphism $\Psi: \cT'_1 \to \cT'_2$. In addition, $\Psi|_{S}=\psi$ and $\Psi|_{\cU} = \f_{g_2}^{-1}\circ\f_{g_1}|_{\cU}$ where $\cU$ is an ST-corner neighborhood in $\cT'_1$ on which $\phi_{g_1}$ is well defined.
\end{Theorem}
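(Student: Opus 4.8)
The plan is to reduce the problem to an equality of \emph{target-space} metrics on $M_0$ and then invoke uniqueness for the reduced hyperbolic problem. Set $\mathring{g}_i = (\phi_{g_i}^{-1})^*g_i$ on a neighborhood of $\cC_0$ in $M_0$, and let $\Psi = \phi_{g_2}^{-1}\circ\phi_{g_1}$ on the neighborhood $\cU$ of $\cC\cap\cT_1'$ where $\phi_{g_1}$ is a diffeomorphism onto its image. Since $g_i = \phi_{g_i}^*\mathring{g}_i$, one has $\Psi^*g_2 = \phi_{g_1}^*\mathring{g}_2$, so that $\Psi^*g_2 = g_1$ on $\cU$ is \emph{equivalent} to $\mathring{g}_1 = \mathring{g}_2$ on the overlap of the images of $\phi_{g_1}$ and $\phi_{g_2}$ in $M_0$. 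Thus the boundary part of the theorem is the single assertion $\mathring{g}_1 = \mathring{g}_2$. Note also that the claimed normalization $\Psi|_S = \psi$ is automatic: since $\phi_{g_i}|_S = E_{\gamma_i}$ and $\gamma_1 = \psi^*\gamma_2$, the equivariance \eqref{equi} gives $E_{\gamma_1} = E_{\gamma_2}\circ\psi$, hence $\Psi|_S = E_{\gamma_2}^{-1}\circ E_{\gamma_1} = \psi$.

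To prove $\mathring{g}_1 = \mathring{g}_2$ I would check that both are solutions of one and the same gauge-reduced Einstein IBVP on $M_0$ and then quote its uniqueness. First, because $\phi_{g_i}$ is a wave map from $(\cU,g_i)$ to $(M_0,g_R)$, the identity map $(M_0,\mathring{g}_i)\to(M_0,g_R)$ is a wave map; consequently $\mathring{g}_i$ solves the $g_R$-wave-map reduced Einstein system, a quasilinear hyperbolic system for the components of $\mathring{g}_i$, as used in the proof of Theorem \ref{exist}. Second, the boundary data coincide: by hypothesis $([\sigma_1],H_1) = ([\sigma_2],H_2)$ on $\cC_{0\tau}$, and by \eqref{bg3} these are exactly $([\mathring{g}_i^\intercal], H_{\mathring{g}_i})$, so $\mathring{g}_1$ and $\mathring{g}_2$ carry identical boundary data there. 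It then remains to match the Cauchy data on $S_0$.

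The Cauchy-data matching is where the equivariance of the construction does the essential work. The induced metric of $\mathring{g}_i$ on $S_0$ is $(E_{\gamma_i}^{-1})^*\gamma_i$; using $\gamma_1 = \psi^*\gamma_2$ and $E_{\gamma_1} = E_{\gamma_2}\circ\psi$ one computes $(E_{\gamma_1}^{-1})^*\gamma_1 = (E_{\gamma_2}^{-1})^*\gamma_2$, so the two induced metrics agree. For the transverse (first-order) data one must track the normalizations built into \eqref{c1}, \eqref{c2}: the condition $(\phi_{g_i})_*(N_{g_i}) = T_{g_R}$ together with $\kappa_1 = \psi^*\kappa_2$ and the equivariant identification $E_{\gamma_1} = E_{\gamma_2}\circ\psi$ forces the pushed-forward second fundamental forms, and hence $\partial_{t_0}\mathring{g}_i|_{S_0}$, to coincide. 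With identical reduced equations, boundary data and Cauchy data, uniqueness for the reduced hyperbolic IBVP (the companion to the existence in Theorem \ref{exist}) yields $\mathring{g}_1 = \mathring{g}_2$ on the common domain of dependence, establishing $\Psi^*g_2 = g_1$ near $\cC$ and $\Psi|_\cU = \phi_{g_2}^{-1}\circ\phi_{g_1}$.

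Finally I would extend the isometry off the boundary collar into the full domain of dependence of $S\cup(\cC\cap\cT)$. Away from $\cC$ the problem is an ordinary Cauchy problem, so the classical geometric uniqueness for \eqref{vac} of Choquet-Bruhat and Geroch produces an isometry near $S$ extending $\psi$; on the overlapping corner region this interior isometry and the boundary isometry $\phi_{g_2}^{-1}\circ\phi_{g_1}$ both restrict to $\psi$ on $S$ and solve the same reduced system, hence agree, and patch to a single $\Psi$. One then propagates $\Psi$ over all of $\cT_1'$ by the usual open and closed argument: the set on which the local isometries coincide is open (local uniqueness), closed (continuity), and nonempty, so it is everything. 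I expect the main obstacle to be precisely this gluing together with the first-order Cauchy-data matching of the third paragraph: one must verify that the wave-map gauge near $\cC$ and the harmonic gauge used for the interior Cauchy uniqueness are mutually consistent across the corner, so that no obstruction prevents the two locally defined isometries from assembling into one globally defined $\Psi$ on the domain of dependence.
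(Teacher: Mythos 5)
Your overall strategy is the same as the paper's (which proves this statement as Theorem \ref{exist_unique}, via Theorems \ref{corner_exist1} and \ref{corner_exist2}): pull both metrics back to $M_0$ by their associated wave maps, so the boundary assertion becomes $\mathring{g}_1=\mathring{g}_2$; observe that each $\mathring{g}_i$, paired with the identity map, solves the same wave-map--gauged reduced system; invoke uniqueness of that reduced IBVP; handle the interior with Choquet-Bruhat--Geroch; and glue the two isometries by rigidity along $S$ (for the gluing, the precise mechanism is not that the two maps ``solve the same reduced system,'' but that both fix $S$ and carry the $g_1$-unit normal of $S$ to the $g_2$-unit normal, so they agree to first order along $S$ and hence agree on the connected overlap). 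Your reduction $\Psi^*g_2=g_1 \Leftrightarrow \mathring{g}_1=\mathring{g}_2$, the identity $\Psi|_S=\psi$ via \eqref{equi}, and the observation that $\mathrm{Id}:(M_0,\mathring{g}_i)\to(M_0,g_R)$ is a wave map are all correct.

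The genuine gap is in your third paragraph, the Cauchy-data matching. The inference ``the pushed-forward second fundamental forms coincide, hence $\partial_{t_0}\mathring{g}_i|_{S_0}$ coincide'' is a non sequitur: for the \emph{reduced} hyperbolic system the Cauchy data are the full components $(\mathring{g}_{\a\b},\partial_{t_0}\mathring{g}_{\a\b})$ on $S_0$, and the geometric data $\big((E_{\g_i}^{-1})^*\g_i,(E_{\g_i}^{-1})^*\k_i\big)$ only fix the spatial part. To pin down the lapse/shift components $\mathring{g}_i(T_{g_R},\cdot)$ along $S_0$ you need, in addition to \eqref{c2}, (i) the corner compatibility with the \emph{fixed} $\Theta_\cC$ (equation \eqref{corner2}), which is what forces the two solutions to have the same corner angle $g_i(N_{g_i},n_{g_{i,S}})$ at $\Si$, and (ii) an argument that the decomposition of $N_{g_i}$ into normal and tangential parts along $S$ propagates from the corner by ODEs whose coefficients involve only $(\g_i,\k_i)$ (Weingarten/Gauss formulas), so that it is the same for both solutions; and to pin down $\partial_{t_0}\mathring{g}_{0\a}$ you must invoke the gauge condition $V_{\mathring{g}_i}=0$ on $S_0$, which you never use. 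You also omit from the matched boundary conditions the Sommerfeld condition $(T^c_{\mathring{g}_i}+\nu_{\mathring{g}_i})^T=\Theta_\cC$ inherited from \eqref{bf3}; the data $([\s],H)$ alone do not suffice for uniqueness of the reduced system. The paper sidesteps all of this: it never claims the full gauged Cauchy data agree, but instead quotes the uniqueness of the \emph{expanded} system (Propositions \ref{unique1}, \ref{unique2}, via Lemma \ref{Hvac2}), in which any mismatch of gauged Cauchy data and of harmonic gauge is absorbed into diffeomorphisms in $\mathrm{Diff}_0$ and $\mathrm{Diff}_1$; the requirement that these diffeomorphisms also intertwine the gauge fields, i.e.\ $\mathrm{Id}\circ\Psi=\mathrm{Id}$, then forces them to be the identity, yielding $\mathring{g}_1=\mathring{g}_2$ without any direct data matching. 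Your route can be completed, but only after supplying (i), (ii) and the $V=0$ argument above; as written, the crucial step is asserted rather than proved.
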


In the above the diffeomorphism $\psi$ must belong to ${\rm Diff}'(S)$. It remains open if this can be generalized to $\psi \in {\rm Diff}(S)$, since for general $\psi\in{\rm Diff}(S)$, one may lose track of the boundary data when transforming from $g$ to $\psi^*g$. We refer to Theorem \ref{exist_unique} for more detail. 

\medskip

Next we turn to the existence of a correspondence as in \eqref{mod2}. In analogy to prior discussion, we call a solution of the system \eqref{maing3}-\eqref{bg3} a vacuum development of the initial-boundary data $(\bI,\bB)$. Let $\cM$ be the moduli space of vacuum developments, where two solutions $(\cT_1, g_1)$, $(\cT_2, g_2)$ are equivalent if they are isometric for a short time starting from the initial surface $S$, in other words, there exists a vacuum development $(\cT, g)$ and embeddings $\Psi_i: \cT \to \cT_i$, with $\Psi_i:S\to S,~\Psi_i:\cC\cap\cT\to\cC\cap\cT_i$ and $\Psi_i|_S\in{\rm Diff}'(S)$, such that $\Psi_i^*g_i = g$ $(i=1,2)$. 

Regarding the right side of \eqref{mod2}, there are compatibility or corner conditions between the initial data $\mathbb{I}$ and boundary data $\mathbb{B}$; these are described in more detail in \S 2. Let then $\cI\times_c \cB$ denote the space of compatible initial data $\mathbb{I}$ and boundary data $\mathbb{B}$, modulo the equivalence relation given in Definition \ref{equiv_IB}. Next, regarding the boundary gauge $\Theta_{\cC}$, let $\chi(\cC_0)$ denote the space of smooth vector fields on $\cC_0$. While $\Theta_{\cC} \in \chi(\cC_0)$ may be chosen arbitrarily away from the corner $\Si_0$, there are also compatibility conditions that $\Theta_{\cC}$ must satisfy at the corner $\Si_0$; for example, at lowest order (cf. equations \eqref{cibc3} and \eqref{corner1})
$$\Theta_{\cC} = \ell T_{g_R}$$
at $\Si_0$, for some function $\ell$, $0<\ell<\tfrac{1}{\sqrt{2}}$; the value of $\ell$ is determined by the intersection angle of $S_0$ and $\cC_0$ at $\Si_0$ with respect to the pull-back metric $(\f_{g}^{-1})^*g$. Let $\cJ(\Si_0)$ denote the space of $C^{\infty}$ jets of vector fields on $\cC_0$ at $\Si_0$, satisfying the corner conditions, cf.~\S 2 and Proposition \ref{coner_cmpt} for details. A jet $J\in \cJ(\Si_0)$
 is given by
 \bes
J= \big(\Theta_\cC,~\cL_{T_0}\Theta_\cC,~\cL^2_{T_0}\Theta_\cC,...\big) ~\mbox{ on }~\Si_0
 \ees
for some vector field $\Theta_\cC\in\chi(\cC_0)$. Here $T_0$ is the field of
the unit normal vectors to the level sets $\Si_\tau$ in the ambient manifold $(\cC_0,(g_R)|_{\cC_0})$; and $\cL^n_{T_0}$ is the $n$ times Lie derivative with respect to $T_0$. One has a natural fibration $\pi: \chi(\cC_0) \to \cJ(\Si_0)$; each fiber is diffeomorphic to the space of smooth vector fields on $\cC_0$ vanishing to infinite order at $\Si_0$. A smooth section $\Lambda:\cJ(\Si_0)\to \chi(\cC_0)$ assigns to each jet $J\in\cJ(\Si_0)$ a smooth vector field $\Lambda(J)\in\chi(\cC_0)$.

\begin{Theorem}\label{phase_space} With a fixed smooth section $\Lambda$ of the fibration $\pi: \chi(\cC_0) \to \cJ(\Si_0)$, there is a bijective correspondence
\be \label{mod}
D_{\Lambda}: \cM \to (\cI \times_c \cB)\times  \cJ(\Si_0).
\ee
In addition, given a representative element $(\bI,\bB,J)\in (\cI \times_c \cB)\times  \cJ(\Si_0)$ there is a unique (up to isometry) maximal vacuum development corresponding to $\big(\bI,\bB,\Lambda(J)\big)$.
\end{Theorem}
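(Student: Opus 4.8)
The plan is to establish the bijection \eqref{mod} by constructing $D_\Lambda$ and its inverse separately, deferring the hard analytic content to the already-proven existence and uniqueness theorems. The map $D_\Lambda$ itself is the easy direction: given an equivalence class of vacuum developments $[(\cT,g)]\in\cM$, I would first form the associated wave map $\f_g$ in the boundary gauge determined by the jet data, read off the initial data $\bI=(g_S,K_{g|S})$ on $S$ and the geometric boundary data $\bB=\big([((\f_g^{-1})^*g)^\intercal],\,H_{(\f_g^{-1})^*g}\big)$ on $\cC_0\cap\f_g(\cU)$ as in \eqref{bg0}, together with the corner jet $J=\pi(\Theta_\cC)\in\cJ^c_{\Si_0}(\cC_0)$ extracted from the gauge field. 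The content of \eqref{equif}, combined with the fact that ${\rm Diff}(M)$ acts trivially on $\bB$, guarantees that isometric developments produce equivalent triples $(\bI,\bB,J)$, so $D_\Lambda$ is well-defined on equivalence classes; the corner-compatibility of the output lands it in $(\cI\times_c\cB)\times\cJ^c_{\Si_0}(\cC_0)$ by the corner analysis of \S 2 and Proposition~\ref{coner_cmpt}.

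For the inverse, I would feed a representative $(\bI,\bB,J)$ together with the chosen section value $\Theta_\cC=\Lambda(J)$ into the Existence theorem (Theorem~\ref{exist}). This produces a vacuum development $(\cT,g)$ realizing exactly $(\bI,\bB)$ in the boundary gauge $\Lambda(J)$, whose equivalence class in $\cM$ I would declare to be $D_\Lambda^{-1}(\bI,\bB,J)$. That $D_\Lambda\circ D_\Lambda^{-1}=\mathrm{id}$ follows from the compatibility of the constructions: the metric produced by Theorem~\ref{exist} has, by construction, initial-boundary data $(\bI,\bB)$ and gauge field $\Lambda(J)$ with jet $J$. For the other composite $D_\Lambda^{-1}\circ D_\Lambda=\mathrm{id}$, I would invoke the Geometric Uniqueness theorem (Theorem~\ref{geom_unique}): if $(\cT,g)$ is any development with data $(\bI,\bB,J)$, then solving Theorem~\ref{exist} afresh with $\Theta_\cC=\Lambda(J)$ yields a development with equivalent data, and Theorem~\ref{geom_unique} supplies a diffeomorphism $\Psi\in{\rm Diff}'$ with $\Psi^*g_2=g_1$ near $S$, hence the two lie in the same class of $\cM$.

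The maximal development claim I would handle by the standard Choquet-Bruhat--Geroch patching argument adapted to the IBVP. Given $(\bI,\bB,\Lambda(J))$, consider the collection of all vacuum developments realizing these data in the fixed gauge $\Lambda(J)$, partially ordered by the subdevelopment relation used to define $\cM$. The key point—and this is where the geometric structure built in the paper does the real work—is that any two such developments agree on a common subdevelopment: the equivariance \eqref{equif} lets me transport the gauge $\Lambda(J)$ consistently between the two solutions via $\f_{g_2}^{-1}\circ\f_{g_1}$, and Theorem~\ref{geom_unique} then gives a canonical isometric identification of overlaps that is compatible on triple intersections because the identifying diffeomorphisms are pinned down explicitly ($\Psi|_S=\psi$, $\Psi|_\cU=\f_{g_2}^{-1}\circ\f_{g_1}$). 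I would take the union over a maximal chain (Zorn's lemma) and verify the resulting glued spacetime is again a vacuum development, globally hyperbolic with the correct domain-of-dependence property from Theorem~\ref{exist}, and maximal by construction.

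The main obstacle I anticipate is the consistency of the patching diffeomorphisms on overlaps, specifically that the locally-defined isometries from Theorem~\ref{geom_unique} are \emph{the same} map on intersections of three or more developments, so that the quotient is Hausdorff and the glued object is a genuine manifold rather than a non-separated one. In the pure Cauchy setting this is the delicate step resolved by Geroch; here the extra ingredient is that the boundary gauge must be respected simultaneously, and the saving feature is precisely that the identifying diffeomorphism is \emph{uniquely determined} as $\f_{g_2}^{-1}\circ\f_{g_1}$ near $\cC$ rather than merely existing—this rigidity is what forces cocycle compatibility. I would therefore spend most of the effort verifying that these canonical identifications satisfy the cocycle condition and that the gauge field $\Lambda(J)$, being fixed once and for all via the section $\Lambda$, removes the ambiguity that obstructs patching in the gauge-source-dependent approaches criticized in the introduction.
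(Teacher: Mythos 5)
Your bijection argument follows the paper's route (Proposition \ref{coner_cmpt}): the existence theorem furnishes $D_\Lambda^{-1}$ and geometric uniqueness makes both directions well defined on equivalence classes. But as written it is circular at the key point: a point of $\cM$ is a bare development $(\cT,g)$ with no gauge field attached, so you cannot ``extract'' $J=\pi(\Theta_\cC)$ from a gauge field, nor form $\f_g$ ``in the boundary gauge determined by the jet data'' before the jet has been defined. What is needed -- and this is the actual content of the paper's proof -- is that the $C^k$ corner compatibility conditions of the wave-map system \eqref{mainf3}-\eqref{bf3} force the entire jet at $\Si_0$ of any admissible $\Theta_\cC$ to be determined by the corner geometry of $g$ itself: at first order $\Theta_\cC=\ell T_{g_R}$ with $\ell$ given by \eqref{corner2}, and at order $k$ by \eqref{corner}, and these expressions are diffeomorphism invariant. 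Only after this computation is $J(g)$ a well-defined function of the isometry class, and only then does $\Theta_\cC=\Lambda\big(J(g)\big)$ make sense so that $\f_g$, and hence the data $(\bI,\bB)$, can be read off. Citing Proposition \ref{coner_cmpt} for this step assumes the very statement being proved.

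The serious gap is in your maximality argument. You locate the difficulty in ``cocycle compatibility'' of the identification maps and claim that the rigidity $\Psi|_{\cU}=\f_{g_2}^{-1}\circ\f_{g_1}$ forces the glued space to be Hausdorff. That is not the mechanism, and rigidity alone does not suffice: gluing two copies of $\bR$ along the open set $\bR\setminus\{0\}$ by the identity map -- as canonical and consistent an identification as there is -- produces the non-Hausdorff line with two origins. Non-Hausdorff points arise at the \emph{edge} of the maximal common subdevelopment $\hat M$ of $\w M$ and $M'$, not from any inconsistency of identifications on overlaps. The paper's proof (Theorem \ref{max_dev}) excludes them by contradiction with the maximality of $\hat M$: at an interior edge point one finds a spacelike hypersurface through it lying in $\hat M$ except at the point itself, so the Cauchy theory yields a strictly larger common development; and -- the genuinely new, IBVP-specific step -- a corner-type edge point (on the closure of the timelike boundary $\hat M_\cC$) is ruled out by a null-geodesic argument together with the semi-global boundary existence and uniqueness of Theorem \ref{corner_exist2}, which produces a common boundary vacuum development past that corner time. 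Your proposal contains no substitute for this step. Relatedly, Zorn's lemma only gives a maximal \emph{element} of the poset; the assertion that this element extends every other development (hence is unique up to isometry) is precisely what the Hausdorff gluing argument establishes, so ``maximal by construction'' after taking a union over a chain does not deliver the uniqueness claim of the theorem.
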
 
We refer to Proposition \ref{coner_cmpt} for the precise construction of the map $D_\Lambda$.
This theorem implies that near the initial surface, the isometry class of a vacuum ST-spacetime is uniquely determined by its geometry at the corner, the initial data, and the boundary data expressed in its associated gauge. In the above, the smooth section $\Lambda$ gives rise to the gauge source $\Theta_\cC$ that is used in the construction of the associated gauge. 
Moreover, different choices of the background data $(t_0, g_R)$ are incorporated in different choices of $\Theta_{\cC}$. Similarly, different choices of the initial data $(E_{g_S}, T_{g_R})$ for $\f_g$ in \eqref{if3} again merely give rise to different correspondences in \eqref{mod}. 
\medskip

\subsection{Further remarks}
The results above on the structure of solutions $(\cT, g)$ to the IBVP formally resemble the well-known results on the structure of solutions to the Cauchy problem. There are two significant differences here however.

First, in the parametrization \eqref{mod}, we need to fix a choice of $\Lambda$ which essentially determines the gauge source field $\Theta_\cC$, while this is not needed in \eqref{mod1}. Another aspect of this issue is following. 
The initial data $(\g,\k)$ is geometric in that it does not depend on any further data or information on the solution $(V, g)$. In other words, to check whether two Cauchy developments $(V_1,g_1)$, $(V_2,g_2)$ are equivalent, one only needs to read off the intrinsic and extrinsic geometry of the initial surface $S$. However, to compare two ST-spacetimes $(\cT_1,g_1)$ and $(\cT_2,g_2)$, we need to read off the boundary data \eqref{bg3} of each spacetime, which is not purely determined by the geometry (curvatures) of $(\cT_i,g_i)$ at the boundary; one actually needs the full information of $g_i$ to solve for the associated gauge $\phi_{g_i}$, and only after that will the boundary data be available for comparison. This issue is mentioned in \cite{Friedrich:2009} -- to establish an ideal geometric uniqueness result as in the Cauchy problem, we need to include the gauge source field $\Theta_\cC$ in the boundary data, and try to construct equivalence relations for the triple $(\Theta_\cC,[\sigma], H)$. In the Friedrich-Nagy work, the issue is to construct an equivalence relation for $(F^A,\cB(g))$. Both issues remain widely open. (Note however that it is unknown whether a parametrization as in \eqref{mod} can be constructed based on the well-posed IBVP in \cite{Friedrich-Nagy:1999}; one main reason is that the mean curvature of the timelike boundary serves both as a geometric data and as a gauge source function).

Secondly, the initial data $E_{g_S}$ (or the time derivative initial data) for the associated gauge $\f_g$ does not propagate forward in time. Moreover, while the boundary data $([\s], H)$ of $\bB$ are freely specifiable, the boundary {\it conditions} \eqref{bg3} are expressed in terms of the evolution of $\f_g$ along the boundary $\cC$. This evolution is globally dependent on the solution $g$. 
In more detail, let $(\cT, g)$ be a vacuum solution with associated gauge $\f_g$ and initial-boundary data $(\bI, \bB)$ as in \eqref{maing3}-\eqref{bg3}. Let $\w S \subset \cT$ be a Cauchy surface in $(\cT, g)$ to the future of $S$ with induced metric $g_{\w S}$ and second fundamental form $K_{g|\w S}$. The associated gauge $\f_g$ no longer satisfies the conditions \eqref{if3} on $(\w S,g_{\w S})$. Solving the system 
\eqref{maing3}-\eqref{bg3} with the new initial data set $(\w S, g_{\w S},K_{g|\w S})$ and the same boundary data $\bB$ gives a new solution $\w g$ with associated gauge $\f_{\w g}$ starting at $\w S$. Since the maps $\f_g$ and $\f_{\w g}$ are distinct, the solutions $g$ and $\w g$ can not be expected to be isometric near $\w S \cap \cC$. This behavior is different from that of solutions to the Cauchy problem.\footnote{We are grateful to Jacques Smulevici for pointing out this fact.} It would be interesting to understand if there are other methods of determining a preferred gauge $\f_g$ which are more local and in particular independent of the initial data. 

On the other hand, the use of associated gauge and the large symmetry group $\mathrm{Diff}(\cC)$ of the boundary data space $\mathbb{B}$ 
is very useful in developing a quasi-local Hamiltonian for the IBVP, which has not been accomplished by other means. This is  discussed elsewhere, cf.\cite{An-Anderson:2021}. 

\medskip

{
Finally we mention the recent work of Fournadavlos-Smulevici which proves existence and geometric uniqueness of vacuum solutions with totally geodesic boundary condition \cite{Fournodavlos-Smulevici:2021}  or totally umbilic boundary condtion \cite{Fournodavlos-Smulevici:2023}. In these works, the boundary conditions are fixed to be of a very special type, in contrast to general boundary data considered here. \footnote{As pointed out in \cite{An-Anderson:2021}, boundary data consisting of the second fundamental form $A$ of the boundary $\cC$ does not lead to a well-posed IBVP.}
A comprehensive survey and numerous further references regarding the IBVP for the Einstein equations are given in \cite{Sarbach-Tiglio:2012}. 

  We also note that the results above hold, with minor changes in the proofs, to vacuum spacetimes with a non-zero cosmological constant, where \eqref{vac} is replaced by the equation $\Ric_g = \Lambda g$, $\Lambda \in \bR$.  We also expect the results to hold for the Einstein equations coupled to matter fields which admit a well-posed IBVP with respect to a fixed background metric. However, the actual verification of this is left for future work. Similarly, it would be interesting to identify exact gravitational boundary conditions for which the results above hold in higher dimensions. 

}
\medskip

The contents of the paper are briefly as follows.  In \S 2, we construct expanded IBVP's of vacuum Einstein equations for the metric $g$ and wave equations for the gauge field $F$ with two different sets of initial-boundary data $({\bf I},{\bf B})$ in \eqref{I1}-\eqref{B1} and $({\bf I},{\bf B_{\cC}})$ in \eqref{I2}-\eqref{B2}. We also discuss in detail the gauge reduced systems and derive the corresponding frozen coefficient linear systems for both of them. 
In \S 3, we derive the requisite energy estimates for these linear systems, based on Sommerfeld and Dirichlet energy estimates. These are then used in \S 4 to prove local well-posedness of the gauge reduced systems of the expanded IBVP's. We also prove local versions of geometric uniqueness results for the expanded IBVP's. Building on these prior results, the main section of the paper, \S 5, then discusses the gluing of local solutions to obtain the global solutions of the IBVP's,  both for the expanded systems of $(g, F)$ as well as in the context of vacuum solutions $g$ (Theorems \ref{exist} and \ref{geom_unique}). {In addition, we also prove Theorem \ref{phase_space} and in particular discuss the existence and uniqueness of a maximal solution to the IBVP \eqref{maing3}-\eqref{bg3} analogous to  the corresponding result for the Cauchy problem.} Finally in the Appendix, \S 6, we collect and derive a number of results used in the main text. 
\medskip

This work benefited greatly from participation at the BIRS-CMO conference on Timelike Boundaries in General Relativistic Evolution Problems held at Oaxaca, Mexico in July 2019. We would like to thank P. Bizon, H. Friedrich, O. Reula and O. Sarbach for organizing such a fine meeting and thank in particular Helmut Friedrich and Jacques Smulevici for very useful discussions and comments on an earlier draft of this work.   


\section{The expanded system of spacetime metrics and {wave maps}}

It is well-known that the vacuum Einstein equations \eqref{vac} form a degenerate hyperbolic system on the spacetime metric.
In the following, we will consider formulations of the IBVP where the vacuum Einstein equations are reduced to be strictly hyperbolic using a harmonic gauge. Notice that, in an ST-spacetime $M=I\times S$, the choice of a harmonic gauge is not unique; it depends on a suitable choice of initial and boundary conditions. More precisely, working locally in $M$ for the moment, local harmonic or wave coordinates are functions $x^{\a}$, $\a = 0,1,2,3$ with $\Box_g x^{\a} = 0$. Such coordinates are uniquely determined by their initial data on $S$ and boundary data on $\cC$. Since the initial data of the spacetime metric $g$ consists of the Cauchy data $(g_S,K_{g|S})$ on $S$, and since the Cauchy data transforms naturally between different gauges, the initial data of the gauge field ($x^{\a}$) will not impact the existence and geometric uniqueness of the solution to the Cauchy problem for the vacuum Einstein equations. 

On the boundary $\cC$, one may impose (for instance) Dirichlet or Sommerfeld-type boundary conditions for $x^{\a}$ on $\cC$. The boundary $\cC$ may be defined locally as the locus $\{x^1 = 0\}$, so that $x^1$ is a local defining function for $\cC$; this gives a fixed Dirichlet boundary value to $x^1$. There remain 3 degrees of freedom in the choice of boundary data for $x^\a$, $\a = 0, 2,3$ on $\cC$. This freedom formally corresponds to the freedom in the choice of timelike vector field $T$ as in the work \cite{KRSW:2009}, which is involved in boundary conditions of the metric $g$ in the IBVP of vacuum Einstein equations, and hence impacts the geometric uniqueness of the solutions in \cite{KRSW:2009}.
There appears to be no general method to remove this freedom by some more canonical choice, (although see the remarks in \cite{Friedrich:2009}). Thus, we first take the approach to expand the system \eqref{vac} to a system of equations with unknowns consisting of both a metric $g$ and a gauge field{, i.e. a wave map} $F$, and establish the well-posedness of the IBVP for the expanded system.

\subsection{The expanded system for $(g,F)$}
As discussed in \S 1, let $(M_0,g_R)$ denote the target space, equipped with the time function $t_0$ and distance function $r_0$. { Simultaneously with} solving for a vacuum spacetime metric $g$ in $M$, consider wave maps 
\be \label{wave1}
{F:(M, g) \to (M_0, g_{R}) }
\ee
coupled to $g$, i.e.~critical points of the Dirichlet energy $\int_{M}|DF|^2 dV_{g}$, cf.~\cite{Geba-Grillakis:2017} for instance. Such maps satisfy the wave map equation 
\be \label{wave2}
\Box_g F + \Gamma_{g_R}(F)g(\nabla F, \nabla F)= 0.
\ee
In a local chart $\{y^\rho\}_{\rho=0,1,2,3}$ of $M_0$, the equation above is equivalent to 
\bes
\Box_g F^\rho+(\Gamma_{\rho_1\rho_2}^\rho\circ F)g(\nabla F^{\rho_1},\nabla F^{\rho_2})=0
\ees
where $F^\rho=y^\rho\circ F$, $\Gamma_{\rho_1\rho_2}^\rho$ are the Christoffel symbols of $g_R$ in the chart $(y^\rho)$, and $\nabla F^{\rho_1}$ is the gradient of $F^{\rho_1}$ with respect to the metric $g$. In addition, we recall that in a local chart $\{x^\a\}_{\a=0,1,2,3}$ of $M$, the wave operator $\Box_g F^\rho=\tfrac{1}{\sqrt{-{\rm det} g}}\p_{\a_1}[\sqrt{-{\rm det} g}\cdot g^{\a_1\a_2}\p_{\a_2}F^\rho]$.

We will impose Dirichlet boundary conditions for $F$ on the boundary $\cC$. Moreover, initial-boundary data for $F$ will be prescribed so that $F$ is a diffeomorphism onto its image in a neighborhood of $\cC$ for at least a short time. Thus, locally and near the boundary, $F$ gives a gauge choice of generalized harmonic (or wave) coordinate system depending on the choice of its Dirichlet boundary values, as described above with the local chart $\{x^{\a}\}$. 

Now we establish an expanded system for the pair $(g,F)$. In the bulk $M$, consider evolution equations 
\be \label{main}
\begin{cases}
\Ric_{g} = 0,\\
\Box_g F + \Gamma_{g_R}(F)g(\nabla F, \nabla F)= 0
\end{cases}
\mbox{ in }M.
\end{equation}
Note that while $F$ is coupled to $g$, $g$ is not coupled to $F$, i.e.~we are not considering the coupled system of Einstein-wave map equations.\footnote{We expect that the existence and geometric uniqueness results below, Theorems \ref{exist1}-\ref{exist2},\ref{unique1}-\ref{unique2}, also hold for the fully coupled Einstein-wave map system, where the vacuum equation $\Ric_g = 0$ is replaced by $\Ric_g = T_F$, where $T_F$ is the stress-energy tensor of the wave map $F$.} 
We impose the initial conditions 
\be \label{i1}
\begin{split}
\begin{cases}
g_{S}=\g,~ K_{g|S}=\k \\
F=E_0,~F_*(\cN_g) = E_1
\end{cases}
\quad\text{on }S,
\end{split}
\ee
and the boundary conditions 
\be \label{b1}
\begin{split}
\begin{cases}
F=G\\
[g_F^\intercal]=[\s]\\
F_*(T_g+\nu_g)=\Theta
\end{cases}
\quad\text{on }\mathcal C.
\end{split}
\ee
In the initial conditions \eqref{i1}, the pair $(\g,\k)$ is an initial data set satisfying the vacuum constraint equations \eqref{constraint}.  The pair $(E_0,E_1)$, assigning initial conditions for $F$, consists of a map $E_0: S \to S_0$ which induces a diffeomorphism $E_0|_{\Si}: \Si \to \Si_0$, and a vector field $E_1: S \to (TM_0)|_{S_0}$ transverse to $S_0$. Thus the initial conditions in \eqref{i1} are that $g$ induces Riemannian metric $g_{S}=\g$ and second fundamental form $K_{g|S}=\k$ on the initial surface $S$; moreover, $F$ induces the map $E_0$ on $S$ and the push-forward vector field $F_*(\cN_g)$ equals to the prescribed vector field $E_1$. Here $\cN_g$ denotes {some future pointing timelike vector transverse to the initial surface $S\subset (M,g)$}. In the following we call the free data in \eqref{i1} an initial data set on $S$ and denote it as 
\begin{equation}\label{I1}
\begin{split}
{\bf I}=(\g,\k,E_0, E_1).
\end{split}
\end{equation}

\begin{remark}\label{N_g_con}
For later purposes (cf.~\eqref{i3}), {we choose $\cN_g$ in the initial condition $F_*(\cN_g)=E_1$ in \eqref{i1} (and \eqref{i2} below) to be $N_g$ in a neighborhood of the corner $\Si\subset S$.} Here $N_g$, at the corner, equals the future pointing timelike unit normal to the hypersurface $\Si$ in the ambient manifold $(\cC,g_\cC)$ and is defined over a collar neighborhood of $\Si$ in $S$ by parallel extension along the flow of $n_{g_S}$. Recall that $n_{g_S}$ denotes the field of inward unit normal vectors to the equidistant foliations from $\Si$ in $(S,g_S)$. 
\end{remark}

In the boundary conditions \eqref{b1}, we first prescribe the Dirichlet boundary value of $F$ to be $G:\cC\to\cC_0$, where $G$  is a diffeomorphism mapping the corner $\Si$ to $\Si_0$ and $G$ is the restriction of a diffeomorphism in a thickening of $\cC$. It will always be assumed that $G$ is both an orientation and time-orientation preserving diffeomorphism. It is easy to observe that $r_0\circ F = 0$ on $\cC$ due to this boundary condition. 

The other two equations of \eqref{b1} can be understood as prescribing the boundary geometry of $g$ in terms of the wave map $F$. Recall that $S_\tau$ and $\Si_\tau$ denote the {$\tau$-level sets of $t_0$ in the target space $M_0$ and $\cC_0$.} The pull-backs $F^{-1}(S_\tau)$ and $F^{-1}(\Si_\tau)$ define foliations of $M$ and $\cC$ in a neighborhood of $\Si$ and the geometry of $g$ is examined in these foliations. This is equivalent to decomposing the pull-back metric $g_F=(F^{-1})^*g$ on $M_0$ with respect to the foliations $\Si_\tau$ and $S_{\tau}$ near $\Si_0$. 
Let $\s=\s(\tau)$ be a 1-parameter family of Riemannian metrics, {with $\s(\tau)$ defining a metric on the level set $\Si_{\tau}$ for each $\tau$. Let $g_F^\intercal$ denote the field on $\cC_0$ of 2-dim metrics on the level sets $\Si_\tau$ induced by the pull-back metric $g_F$.} Then the second equation in \eqref{b1} states that $g_F^\intercal$ is pointwise conformal to the given metric $\s(\tau)$ on each level set $\Si_\tau$ of $\cC_0$.  
The last boundary equation means the push forward of the vector field $T_g+\nu_g$ by $F$ equals the prescribed vector field $\Theta$, where  $\Theta$ is a nowhere vanishing vector field on $M_0$ restricted to $\cC_0$. Here and in the following, $T_g$ denotes the field of timelike unit normal vectors to the level sets $F^{-1}(S_\tau)$ in $M$ with respect to the metric $g$ and $\nu_g$ denotes the outward spacelike unit normal to $\cC\subset (M,g)$. 
Observe the last equation in \eqref{b1} can be equivalently written as 
$$T_{g_F}+\nu_{g_F}=\Theta \ \ {\rm on}\ \ \cC_0,$$
where $T_{g_F}$, $\nu_{g_F}$ denote the timelike unit normal to $S_{\tau}$ and spacelike unit normal to $\cC_0$ in $(M_0,g_F)$ respectively. 
The free data in \eqref{b1} is considered as a collection of boundary data on $\cC_0$ and denoted by 
\be\label{B1}
\begin{split}
{\bf B}=(G,[\s],\Theta).
\end{split}
\ee

As noted following \eqref{wave2}, locally $F$ may be viewed as a (generalized) harmonic or wave coordinate chart near $\Si$. Thus the IBVP \eqref{main}-\eqref{b1} is to construct vacuum Einstein metrics $g$ satisfying boundary conditions which locally are expressed in the chart $F$. Note however that the vacuum equations for $g$ are not solved in this $F$-chart -- $g$ and $F$ are solved simultaneously in the appropriate local chart, 
cf. the gauged system \eqref{main3} below. 

Observe that the 6 boundary data $([\s], \Theta)$ prescribe certain Dirichlet boundary conditions on the metric $g$ on $\cC$, given the local chart $F$. We note here that there are a number of possible modifications to the boundary conditions \eqref{b1} which can be well-posed locally.
As a trivial example, one may change the last boundary condition in \eqref{b1} locally to 
\be \label{altb}
g_{0\a} + g_{1\a} = \t_{\tilde\a},
\ee 
where $\theta_{\tilde \a}$ are the components of $\Theta$ expressed in some chart $\chi_0$ of $M_0$ and $g_{\a\b}$ are the components of $g$ expressed in the chart $\chi_0\circ F$. However, for many or most of these possible modifications, it may not be possible to extend the local existence to existence of solutions in a full domain $M$ containing $S$ by patching together local solutions; it is the invariance of the geometric quantity $\big([g_F^\intercal], F_*(T_g+\nu_g)\big)$ of a pair $(g,F)$ in \eqref{b1} which makes this possible (cf. also Remark~\ref{alt_theta}). 

\begin{remark}
The equations in \eqref{main} comprise 14 coupled equations for the 14 unknowns $(g, F)$, ($(g_{\a\b}, F^{\a})$ in components). There are 
only 10 boundary conditions in \eqref{b1}; 4 Dirichlet conditions on $F$ and $6$ on $g$ coupled to $F$; these latter will primarily be viewed as 
conditions for $g$ (given $F$). This discrepancy corresponds to the fact that the equation $Ric_{g} = 0$ is degenerate hyperbolic. As is common 
and carried out in \S 2.2 below, one adds a gauge term $\d^{*}V_g$ to make the equations \eqref{main} hyperbolic, giving the gauge reduced Einstein equations \eqref{main3} below. This requires adding the 4 extra boundary conditions $V_g= 0$ at $\cC$ in \eqref{b3} below to ensure that solutions of the gauge reduced Einstein equations are actually solutions of the vacuum Einstein equations. Similarly, there are only 20 initial conditions in \eqref{i1} for the 14 unknowns $(g, F)$. For the gauge reduced Einstein equations, the 8 extra components $g_{0\a}$ and $\p_{t}g_{0\a}$ are added to the initial data, subject to the constraint $V_g= 0$ on $S$ which consists of 4 equations; the action of the diffeomorphism group $\mathrm{Diff}_0(M)$ then accounts for the remaining 4 degrees of freedom; this is described in detail in \S 2.2. 
\end{remark}

It is of basic interest to understand if the 4 degrees of freedom in the choice of $\Theta$ can be reduced to 3 (or less). Using harmonic gauges, it appears to be unlikely that they can be made ``fully geometric" (in that the boundary data is expressed completely in terms of the induced metric and second fundamental form of the boundary $\cC$), but we present below a class of boundary conditions based on the mean curvature of various slices at $\cC$. 

Prescribe then a collection of boundary data on $\cC_0$
\begin{equation}\label{B2}
\begin{split}
{\bf B_{\cC}}=(G,[\s],H,\Theta_{\cC}),
\end{split}
\end{equation}
where $G$, $[\s]$ have the same meaning as in the ${\bf B}$ boundary data, $H$ is a scalar field on $\cC_0$ and $\Theta_{\cC}$ 
is a vector field tangent to $\cC_0$. 
Pairing with ${\bf B_{\cC}}$, we define a collection of initial data on $S$
\begin{equation}\label{I2}
\begin{split}
{\bf I}=(\g,\k,E_0, E_1),
\end{split}
\end{equation}
of exactly the same type as in \eqref{I1}. Then we consider the second system of IBVP for $(g,F)$: 
\be \label{main2}
\begin{split}
\begin{cases}
\Ric_g=0\\
\Box_g F + \Gamma_{g_R}(F)g(\nabla F, \nabla F)= 0
\end{cases}
\quad\text{in }M
\end{split}
\ee 
\be \label{i2}
\begin{split}
\begin{cases}
g_S=\g,~ K_{g|S}=\k\\
F=E_0,~F_*(\cN_g) = E_1
\end{cases}
\quad\text{on }S
\end{split}
\ee
and
\be \label{b2}
\begin{split}
\begin{cases}
F=G\\
[g_F^\intercal]=[\s]\\
H_{g_F}=H\\
[F_*(T_g^c+\nu_g)]^T=\Theta_{\cC}
\end{cases}
\quad\text{on }\cC.
\end{split}
\ee
The notation above is the same as in \eqref{main}-\eqref{b1} while $H_{g_F}$ is a linear combination of different types of mean curvature measured on the boundary as in \eqref{Hintro} (cf. also ~\eqref{h2n} and Proposition \ref{energy2}). In the last boundary equation, $T_g^c$ denotes the field of future pointing timelike unit normal vectors to the hypersurfaces $F^{-1}(\Si_\tau)$ in the ambient manifold $(\cC,g_\cC)$, and the superscript $[\cdot]^T$ denotes the projection of a vector at $\cC_0$ to $T \cC_0$ with respect to $g_R$, i.e. 
\be\label{prj}
[F_*(T^c_g+\nu_g)]^T=F_*(T^c_g+\nu_g)-g_R(F_*(T^c_g+\nu_g),\nu_{g_R})\cdot\nu_{g_R}
\ee
where $\nu_{g_R}$ is the outward unit normal to $\cC_0\subset(M_0,g_R)$. The vector field $\Theta_{\cC}$ is intrinsic to the boundary $\cC_0$, in contrast to the boundary data $\Theta$ in \eqref{B1}. 

The existence of solutions to the IBVP's \eqref{main}-\eqref{b1} and \eqref{main2}-\eqref{b2} in sufficiently small neighborhoods of a corner point $p \in \Si$ relies on the existence of strong or boundary stable energy estimates for their localized or frozen coefficient systems.  
In the following we will reduce these systems using a local harmonic gauge (independent of $F$) and calculate the linearized systems in a local corner neighborhood. Energy estimates for these systems are then derived in \S 3. These together with basically standard methods from the theory of quasi-linear hyperbolic systems of IBVP's are used to establish local existence of solutions (cf.~Theorem \ref{exist1}, \ref{exist2}). 

\subsection{The expanded systems in the harmonic gauge}

In the following, we consider the IBVP's \eqref{main}-\eqref{b1} and \eqref{main2}-\eqref{b2} in a neighborhood of a corner point of $M$, and use the harmonic gauge to reduce the vacuum Einstein equations to be strictly hyperbolic. Then following the standard localization or frozen coefficient method of proving well-posedness of IBVP, we set up the linearizations of the problems at background flat solutions. The system \eqref{main}-\eqref{b1} is discussed first, and then followed with a similar analysis for the system \eqref{main2}-\eqref{b2}. The following conventional index notation will be used throughout: Greek letters $\a \in \{0,1,2,3\}$, lower case Roman indices $i \in \{1,2,3\}$ while upper case Roman $A \in \{2, 3\}$. Similarly, the Einstein summation convention that repeated indices are summed will always be used. 
 
Choose standard Cartesian coordinates $\{x^{\a}\} _{\a=0,1,2,3}= \{x^0=t, x^i\}_{i=1,2,3}$ on $\bR^4$. The standard corner domain is given by ${\bf R} = \{t \geq 0, x^1 \leq 0\}$, so that $\p_t$ is future pointing, $\p_{x^1}$ is outward pointing on the boundary hypersurface $\{x^1=0\}$, and $\p_{x^A}~(A=2,3)$ is tangent to the corner $\{t=x^1=0\}$. We work locally and in a neighborhood $U \subset M$ of an arbitrary corner point $p \in \Si = S\cap \cC$. Assume that $U$ is in the domain of a chart $\chi: U \to {\bf R}$ such that $\chi(p) = 0$. In addition, $\chi$ carries the boundary $\cC\cap U$ to the locus $\{x^1 = 0\}$ and carries the initial surface $S\cap U$ to the locus $\{t = 0\}$. The corner $\Si\cap U$ is thus mapped to a flat domain in $\bR^2$ with coordinates $\{x^A\}_{A=2,3}$. In the following we call a local chart $\chi$ which satisfies the conditions above as a \textit{standard corner chart} at $p$; and call the domain $U$ for this chart as a \textit{local corner neighborhood}. The same process can be carried on the target manifold $(M_0,g_R,t_0)$. So for any corner point $q\in\Si_0$ there is a local corner neighborhood $U_0$ admitting a standard corner chart $\chi_0:U_0\to {\bf R}_0$ which defines coordinates $\{x_0^\a\}_{\a=0,1,2,3}$ on $U_0$, (here ${\bf R}_0={\bf R}$ and we use ${\bf R}_0$ to emphasize $\chi_0$ is a chart on the target manifold $M_0$). In addition, it will always be assumed that the time function $x_0^0$ in a standard corner chart $\chi_0$ of $M_0$ equals to the fixed time function $t_0$ on $M_0$. 

To solve for a local solution of the system \eqref{main}-\eqref{b1} on $U$, we will expand the coordinate-free system to a system of (nonlinear) hyperbolic equations with complete initial and boundary conditions in the chart $\chi$. To begin, according to common practice we introduce a local gauge condition. Given a chart $\chi$ with coordinate functions $x^\a~(\a=0,1,2,3)$, for any metric $g$, let $V_g=V_g(\chi)$ be the vector field on $U$ given by 
\be
\label{gauge}V_g= (\Box_{g}x^{\a})\cdot\p_{x^\a}.
\ee
(The field $V_g$ may be viewed as the tension field of the identity map ${\rm Id}: (U, g) \to (\bR^4, g_R)$, cf.~\cite{Geba-Grillakis:2017} for example). When $V_g = 0$, the coordinates $x^{\a}$ of $\chi$ are harmonic (wave) coordinates with respect to $g$. {In the following, we will use $\d^*_g$ to denote the formal adjoint of the divergence operator $\d_g=-{\rm div}_g$, i.e. for a vector field $V$, $\d^*_gV=\tfrac{1}{2}\cL_V g$.}
For the system \eqref{main}-\eqref{b1}, we then consider the following system of reduced Einstein equations coupled to the wave map $F$ in a local corner neighborhood $U$ : 
\be \label{main3}
\begin{split}
\begin{cases}
\Ric_g+\delta_g^*V_g=0\\
\Box_g F + \Gamma_{g_R}(F)g(\nabla F, \nabla F) + F_*(V_g) = 0
\end{cases}
\quad\text{in }U
\end{split}
\ee
with initial conditions: 
\be \label{i3}
\begin{split}
\begin{cases}
g = q, \ \ {\tfrac{1}{2}}\cL_{T^0_g}g = k, \ \ V_g = 0 \\
F=E_0,~F_*(N_g) = E_1\\
\end{cases}
\quad\text{on }U\cap S
\end{split}
\ee
and boundary conditions: 
\be \label{b3}
\begin{split}
\begin{cases}
V_g=0\\
F=G\\
[g_F^{\intercal}]=[\s]\\
F_*(T_g+\nu_g) = \Theta \\
\end{cases}
\quad\text{on }U\cap\mathcal C.
\end{split}
\ee
It is well-known that the equations in \eqref{main3} are of the form 
\be \label{harmE}
g^{\g\t}\p_\g\p_\t g_{\a\b} + Q_{\a\b}(g, \p g) = 0,\ \ { g^{\g\t}\p_\g\p_\t F^\a + P^\a(g, \p g, F, \p F) = 0}
\ee
where $Q$ is quadratic in $g$ and $\p g$; and $P$ is a polynomial in $g$, $\p g$, $F$ and $\p F$. 
Here we regard $F$ as a map from $U$ to an open set $U_0\subset M_0$. 
When the initial and boundary Dirichlet data $E_0, G$ are given, $U_0$ is understood as an open neighborhood of the target corner point $p_0=E_0(p)=G(p)\in \Sigma_0$. Local representations $F^\a$ of $F$ are given by $F^\a = \chi_0\circ F \circ \chi^{-1}: \chi(U) \subset {\bf R} \to {\bf R}_0$, where $\chi_0$ is a standard corner chart at the image $p_0\in\Si_0$. 
The term $F_*(V_g)$ in the second equation in \eqref{main3} is introduced to simplify the form of the linearization, cf.~\eqref{Flin} below.

The initial and boundary conditions \eqref{i3}-\eqref{b3} are understood to be the restriction of equations \eqref{i1}-\eqref{b1} to $U$ plus choices of gauge source functions on the (local) initial surface. Different from the initial conditions in \eqref{i1} where we only prescribe the 3-dim Riemannian metric $g_S$ and 3-dim symmetric 2-tensor $K_{g|S}$ on $S$, in \eqref{i3} $q$ is a 4-dim Lorentz metric on $M$ restricted to $S$ while $k$ is a 4-dim symmetric bilinear form on $M$ restricted to $S$. The pair $(q,k)$ is understood as an extension of the 3-dim geometric initial data $(\g,\k)$ via a certain choice of gauge source functions (lapse function and shift vector of the spacetime), and it prescribes the full spacetime metric $g$ and the full Lie-derivative $\tfrac{1}{2}\cL_{T_g^0}g$ at $S$. Here $T_g^0$ denotes the future pointing timelike unit normal vector to the initial surface $S\cap U$ in the spacetime $(U,g)$. (By definition of the second fundamental form, $K_{g|S}$ is obtained by restricting $\tfrac{1}{2}\cL_{T_g^0}g$ to the tangent space of $S$.) The initial condition $V_g=0$ in \eqref{i3} is an implicit restriction on the choice of the initial data $(q,k)$. We write it explicitly to emphasize this gauge condition. 

Note here (as is also mentioned in Remark~\ref{N_g_con}) we prescribe the initial value for $F_*(N_g)$ in \eqref{i3} {instead of $F_*(\cN_g)$ with a general choice of $\cN_g$.} We choose $F_*(N_g)$ here and in the following analysis because the corner compatibility conditions in this case have simpler expressions. One can also analyze the case with more general $F_*(\cN_g)$ by following the same lines below. 

Recall that $({\bf I}, {\bf B})$ denotes the geometric initial-boundary data as in \eqref{i1}-\eqref{b1}. We will make the following assumptions on the global data $({\bf I},{\bf B})$ in \eqref{i1}-\eqref{b1} throughout the paper:
\begin{enumerate}
\item The Riemanian metric $\g$ and symmetric bilinear form $\k$ in \eqref{i1} satisfy the constraint equations \eqref{constraint}.
\item The global initial data $E_0:S\to S_0$ is diffeomorphism in a collar neighborhood of $\Si$ in $S$, and for an ST-corner neighborhood $\cU$ the restricted map $E_0|_\cU: S \cap \cU \to S_0\cap \cU_0$ in \eqref{i3} is an orientation preserving diffeomorphism onto its image in $S_0$. 
\item The vector field {$E_1|_\cU:S\cap \cU\to E_0^*(TM)|_{S_0\cap \cU_0}$} is transverse to $S_0$, and at the corner $E_1|_\Si$ is tangent to $\cC_0$. 
\item The map $G|_\cU: \cC\cap \cU \to  \cC_0\cap \cU_0$ in \eqref{b3} is the restriction of a diffeomorphism $\cC \to \cC_0$ which is both 
orientation and time-orientation preserving. 
\end{enumerate}

We will use $(I, B)$ to denote the local extended (or gauged) initial-boundary data raised from $({\bf I},{\bf B})$, i.e.~$I=(q,k,E_0,E_1),~B=(G,[\s],\Theta)$ on the initial and boundary surface of $U$ as in \eqref{i3}-\eqref{b3}. One can always choose the appropriate gauge source functions (locally) so that the extended initial-boundary data $(I,B)$ satisfy the following conditions:
\begin{enumerate}
\item The Lorentz metric $q$ is chosen so that the induced metric on $S\cap U$ by $q$ is equal to the restriction of $\g$ from \eqref{i1} on $S\cap U$, i.e. in a standard corner chart $q_{ij}=\g_{ij}$ for $i,j=1,2,3$ on $\{t_0=0\}$, and we consider $q_{0\a}~(\a=0,1,2,3)$ as free gauge source functions.
\item The symmetric bilinear form $k$ is chosen so that the induced symmetric 2-tensor on $S\cap U$ by $k$ is equal to the restriction of $\k$ from \eqref{i1} on $S\cap U$, i.e.~in a standard corner chart $k_{ij}=\k_{ij}$ for $i,j=1,2,3$ on $\{t_0=0\}$, and we consider $k_{0\a}~(\a=0,1,2,3)$ as free gauge source functions.
\item The gauge source functions $(q_{0\a}, k_{0\a})$ are chosen so that $\Box_{g}x_0^{\a}=0$ on $S$, which is exactly the gauge constraint $V_g = 0$ listed explicitly in \eqref{i3}. 
\end{enumerate}

In addition to the assumptions above, the initial-boundary data $({\bf I},{\bf B})$ and the extended local initial-boundary data $(I,B)$ must also satisfy $C^k$ compatibility assumptions at the corner $\Si$ for suitable $k \geq 1$. 
The $C^k$ compatibility conditions are the relations induced between the initial data $I$ and boundary data $B$ at the corner $\Si$ by a solution $(g, F)$ of the system \eqref{main3}-\eqref{b3} which is $C^k\times C^{k+1}$ up to the boundary $\p M = S \cup \cC$, (i.e.~the data $(g, F)$ extend as $C^k \times C^{k+1}$ data to an open neighborhood of the closed domain $M$). 
{ We list the conditions for $(g,F)$ to be at least $C^0\times C^1$ up to the boundary below.} Higher order compatibility requires using the bulk equations \eqref{main3} and \eqref{harmE} to replace $\p_{t}^2$ and higher order $t$-derivatives by $x^{\a}$ derivatives of lower order in $t$. Since it will not be necessary, we do not explicitly express the (complicated) higher order compatibility relations. 

Notice that the Riemannian metric on the corner $\Si_0$ of $M_0$ induced by the pull-back metric $g_F=(F^{-1})^*g$ is given by $(F^{-1})^*g|_{\Si_0}=(E_0^{-1})^*\g|_{\Si_0}$ according to the initial conditions in \eqref{i1}. On the other hand, the (conformal) metric on $\Si_0$ is prescribed by the boundary data $[\s]$ restricted on $\Si_0$. So $(\bf I,\bf B)$ must satisfy 
\be\label{cib1}[(E_0^{-1})^*\g|_{\Si_0}]=[\s]\ \ \mbox{on }\Si_0.\ee
Similarly, since the restriction of the map $F$ on the corner $\Si$ is prescribed by both the restriction of the initial data $E_0|_\Si$ and the restriction of the boundary data $G|_\Si$, $C^0$ compatibility also requires
\be\label{cib2}E_0|_{\Si}=G|_{\Si}.\ee
Moreover, observe that at the corner $T_g+\nu_g$ is in the vector space spanned by $N_g$ and $n_{g_S}$ and the same is true for the push-forward vectors via $F$. Thus for $F$ to be $C^1$ up to the boundary, $(\bf I,\bf B)$ must also satisfy 
\be\label{cib3}\Theta=\l_1 E_1+\l_2 (E_0)_*(-n_\g)\ \ \mbox{on }\Si_0\ee
for some functions $\l_1,\l_2$ on $\Si_0$. By further inspection, we note $\l_1,\l_2$ must be chosen such that $\l_1^2+\l_2^2=2,~\l_1+\l_2>0$ ({cf. \S 6.3} for the detailed calculation).

{ All the compatibility conditions above on $({\bf I},{\bf B})$ naturally induces corner conditions on the local data $(I,B)$. In addition, for the solution $(g,F)$ of the system \eqref{main3}-\eqref{b3} to be $C^0\times C^1$ up to the boundary, the data $(I,B)$ should further satisfy} 
\be\label{cib4}N_g(q)=G^{-1}_*(E_1),~q(N_g,n_\g)=\tfrac{\l_1-\l_2}{\l_1+\l_2}\ \ \mbox{on }\Si\cap U.\ee
The first equation above is due to the fact that the normal vector $N_g=N_g(q)$ is determined by the full Lorentz metric $q$ at the corner, while based on the boundary condition \eqref{b1} $N_g$ is also given by $N_g=F^{-1}_*(E_1)=G^{-1}_*(E_1)$ at the corner. The second equation above means the functions $\l_1,\l_2$ in the choice of $\Theta$ in \eqref{cib3} determines the corner angle (angle between the hypersurfaces $S$ and $\cC_0$), i.e. $q(N_g,n_\g)$ in the choice of the gauge source functions $q_{0\a}$ (cf. \S 6.3 for the detailed calculation).

\begin{remark} \label{loc_diff}
{\rm {\bf (i).} 
As stated in the Introduction, $\nu_g$ is the spacelike outward unit normal to $\cC \cap U \subset (U, g)$. In particular $\nu_g$ is never a null-vector and so never tangent to $\cC$. It follows then from the boundary conditions in \eqref{b3}, (as well as the boundary condtions in \eqref{b1} or \eqref{b2}), that $(\cC\cap U, g)$ is Lorentzian (i.e.~$\cC \cap U$ is timelike with respect to $g$) on its full domain.

 {\bf (ii).} Note also the assumptions above that $E_0: S \to S$ and $G: \cC \to \cC_0$ are diffeomorphisms near $\Si$ and that the vector field $E_1$ is transverse to $S$ imply that for any solution $(g, F)$, there are ST-corner neighborhoods $\cU$ of $\Si$ in $M$ and $\cU_0$ of $\Si_0$ in $M_0$ such that $F$ induces a diffeomorphism $F|_{\cU}: \cU \to \cU_0 = F(\cU) \subset M_0$. 
}
\end{remark}

It is well-known that solutions to the reduced vacuum Einstein equations give rise to vacuum metrics in the harmonic gauge. For simplicity of notations, throughout the following, we will use $\Upsilon$ to denote the boundary of the ST-spacetime, i.e. $\Upsilon=S\cap\cC$.
\begin{lemma}\label{Hvac1} Suppose $U$ is a local corner neighborhood around $p\in\Si$ with a standard corner chart $\chi=\{x^\a\}_{\a=0,1,2,3}~(x^0=t)$, and $g$ is a spacetime metric on $U$ for which $S\cap U$ is spacelike and $\cC\cap U$ is timelike. If $g$ solves the gauged Einstein equations
\be\label{vac1}Ric_g+\d^*_g V_g=0\ \ \mbox{on }U\ee
where $V_g=V_g(\chi)$ is defined as in \eqref{gauge} and $V_g=0$ on $S\cap U$ and $\cC\cap U$, then 
\be \label{V0}
V_g = 0  \ \ {\rm on} \ \ \cD^+(\Upsilon\cap U,g).
\ee
Here $\cD^+(\Upsilon\cap U,g)$ is the future domain of dependence of the hypersurface $\Upsilon\cap U=(S\cup\cC)\cap U$ in $(U,g)$.
Thus $g$ is a Ricci flat metric with harmonic (or wave) coordinate chart $\chi$ in $\cD^+(\Upsilon\cap U,g)$. Moreover, if an infinitesimal deformation $h$ is a solution of the linearization of the equation \eqref{vac1} at a solution $g$, with $V'_h = 0$ on $S\cap U$ and $\cC\cap U$, then 
$$V'_h = 0 \ \ {\rm on} \ \ \cD^+(\Upsilon\cup\cC,g).$$ 
\end{lemma}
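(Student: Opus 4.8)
The statement is the standard propagation-of-gauge result adapted to the IBVP setting, so I would follow the classical harmonic-gauge argument but track the boundary term carefully. The key identity is the contracted second Bianchi identity: since $\Ric_g + \delta_g^* V_g = 0$, taking the divergence and using that $\delta_g \Ric_g = -\tfrac12 d R_g$ (the Bianchi identity, where $R_g$ is the scalar curvature) yields a second-order hyperbolic equation for $V_g$ alone. Concretely, applying the Bianchi operator to \eqref{vac1} gives
\be \label{planbox}
\Box_g V_g + \Ric_g(V_g) = 0 \quad \text{on } U,
\ee
where the Ricci term arises from commuting derivatives; this is a homogeneous linear wave equation for the vector field $V_g$ with coefficients depending on $g$. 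The first step is therefore to derive \eqref{planbox} from the Bianchi identity applied to the gauged equation. Since $V_g = 0$ on both $S \cap U$ and $\cC \cap U$ by hypothesis, we have vanishing Dirichlet data for $V_g$ on the entire relevant portion of $\Upsilon = S \cup \cC$.

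The second step is to feed this into a uniqueness statement for the hyperbolic IBVP \eqref{planbox}. With zero initial data on $S \cap U$ and zero Dirichlet data on $\cC \cap U$, the standard energy estimate for the wave operator $\Box_g$ forces $V_g \equiv 0$ throughout the future domain of dependence $\cD^+(\Upsilon \cap U, g)$. Here one integrates the energy-momentum identity for $V_g$ over a truncated domain bounded by $S$, by $\cC$, and by a spacelike slice; the flux through $S$ vanishes because $V_g$ and its normal derivative vanish there (one needs that the constraint forces the normal derivative to vanish as well, which is where the initial-data compatibility $V_g = 0$ on $S$ combined with the equation is used), and crucially the flux through the timelike boundary $\cC$ is controlled because $V_g$ vanishes on $\cC$ — a pure Dirichlet condition kills the boundary integrand without needing any Sommerfeld-type estimate. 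This is precisely why Dirichlet data for $V_g$ is imposed rather than a more delicate condition. This gives \eqref{V0}, and hence $g$ is Ricci-flat in $\cD^+(\Upsilon \cap U, g)$ with $\chi$ harmonic there.

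The third step, the linearized assertion, follows the same template. Linearizing \eqref{vac1} at a solution $g$ (now genuinely vacuum by the first part) and applying the linearized Bianchi identity gives a homogeneous linear wave equation $\Box_g V_h' + (\text{lower order in } V_h') = 0$ for the linearized gauge field $V_h'$, with the same vanishing initial and Dirichlet boundary data by hypothesis. The identical energy argument then yields $V_h' = 0$ on $\cD^+$. One subtlety worth flagging: in the linearized case the second-order operator governing $V_h'$ should be checked to again be $\Box_g$ with no principal-symbol contribution from $h$, which is the usual reason the linearized gauge propagates on the \emph{background} lightcones of $g$ rather than perturbed ones; this is standard but should be verified when writing out the linearized Bianchi computation.

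\textbf{Main obstacle.} The genuinely delicate point is the energy argument at the corner $\Si = S \cap \cC$, where the spacelike initial surface meets the timelike boundary. The domain $\cD^+(\Upsilon \cap U, g)$ has a corner, and one must ensure the energy estimate for \eqref{planbox} closes despite the non-smooth boundary of the integration region — in particular that no uncontrolled corner contribution appears when the spacelike truncating slice sweeps up from $\Si$. Because the boundary condition is pure Dirichlet ($V_g = 0$ on $\cC$), I expect the corner term to be harmless: the boundary flux integrand vanishes identically on $\cC$ and the matching at $\Si$ is consistent since $V_g = 0$ on both faces. Nevertheless, making the energy integration over a manifold-with-corner rigorous — justifying the divergence theorem up to the corner and confirming the compatibility of the two Dirichlet conditions at $\Si$ — is the step I would expect to require the most care, and it is where the $C^k$ corner-compatibility hypotheses on the data enter.
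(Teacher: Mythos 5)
Your proposal follows essentially the same route as the paper's proof: apply the Bianchi identity to the gauged equation to obtain the linear wave system $\Box_g V_g + \Ric_g(V_g) = 0$, use the vacuum constraints to conclude that the time derivative of $V_g$ also vanishes on $S\cap U$, and then invoke uniqueness for the wave IBVP with homogeneous Dirichlet data on $\cC\cap U$ to get $V_g = 0$ on $\cD^+(\Upsilon\cap U, g)$, with the identical argument handling the linearized case. The paper simply cites standard IBVP uniqueness results (e.g.~\cite{Benzoni-Serre:2007}) where you spell out the energy/corner considerations, but the substance is the same.
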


\begin{proof}  {Let $\b_g = -{\rm div}_g + \frac{1}{2}\tr_g$ be the Bianchi operator with respect to $g$ on symmetric bilinear forms.} 
The Bianchi identity applied to \eqref{vac1} gives 
$$\b_g \d^{*}_gV_g = 0 \ \ {\rm on} \ \ U.$$
By a standard Weitzenbock formula, $2\b_g \d^{*}_g V_g = -\Box_g V_g -  Ric_g(V_g)$. Thus it follows that $\Box_g V_g +  Ric_g(V_g)=0$. This is a linear system of wave equations on $V_g$. As is well-known, cf.~\cite{Hawking-Ellis:1973}, given $V_g = 0$ on $S\cap U$, the constraint equations \eqref{constraint} imply that $\p_{t} V_g = 0$ on $S\cap U$, so the initial data for $V_g$ vanish. Since $V_g = 0$ also on the timelike boundary $\cC\cap U$, standard results on uniqueness of solutions of such linear wave systems imply \eqref{V0}, cf.~\cite{Benzoni-Serre:2007} for instance. The same argument applies to the linearized problem. 
\end{proof}

Conversely, a general spacetime metric on $U$ can be brought into the harmonic gauge by suitable diffeomorphisms in $\mathrm{Diff}_1(U)$ -- the space consisting of maps $\psi:U\to M$ with $\psi$ being a diffeomorphism from $U$ onto its image $\psi(U)$ and $\psi$ is equal to the identity to the first order on $S\cap U$ and equal to the identity to the zero order on $\cC\cap U$. 

\begin{lemma}\label{Hvac2}
Suppose $U$ is a local corner neighborhood around $p\in\Si$ with a standard corner chart $\chi=\{x^\a\}_{\a=0,1,2,3}~(x^0=t)$, and $g$ is a spacetime metric on $U$ for which $S\cap U$ is spacelike and $\cC\cap U$ is timelike. Then there is an open subset $U'\subset U$ covering $S\cap U$ and a diffeomorphism $\psi \in \mathrm{Diff}_1(U')$,  such that 
\be \label{wV}
V_{\psi^*g}(\chi) =(\Box_{\psi^*g}x^{\a}) \p_{x^\a}= 0 \ \ {\rm on} \ \ U'.
\ee
\end{lemma}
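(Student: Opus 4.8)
The plan is to convert the PDE gauge condition $V_{\psi^*g}(\chi)=0$ into an equation for the diffeomorphism $\psi$, which is itself a harmonic-map (wave-map) type equation, and to solve it by setting up an IBVP for $\psi$. Recall that $V_g=V_g(\chi)=(\Box_g x^\a)\p_{x^\a}$ is the tension field of $\mathrm{Id}:(U,g)\to(\bR^4,g_R)$. For a diffeomorphism $\psi:U'\to U$, the condition $\Box_{\psi^*g}x^\a=0$ says precisely that the coordinate functions $x^\a$ are $\psi^*g$-harmonic, equivalently that $\chi\circ\psi:(U',\psi^*g)\to\bR^4$ is a harmonic map, equivalently that $\psi$ itself is a harmonic map from $(U',\psi^*g)$ into $(U,\chi)$ where $\bR^4$ carries the flat metric $g_R$. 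Thus I would look for $\psi$ solving the wave-map equation $\Box_{\psi^*g}\psi+(\text{Christoffel terms})=0$, which when written in the fixed chart $\chi$ becomes a second-order quasilinear hyperbolic system for the component functions $\psi^\a=x^\a\circ\psi$ of exactly the type \eqref{harmE} already appearing in the paper.

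The key steps, in order, are as follows. First I would fix the background chart $\chi$ and rewrite the desired identity \eqref{wV} as a semilinear wave-map system for the unknown map $\psi$, using that $\Box_{\psi^*g}x^\a=0$ is equivalent to $\psi$ being $g$-harmonic; concretely, in coordinates the principal part is $g^{\g\t}\p_\g\p_\t\psi^\a$ plus lower-order terms polynomial in $\psi$ and $\p\psi$, which is strictly hyperbolic since $S\cap U$ is spacelike for $g$. Second, I would impose the initial and boundary data dictated by the requirement $\psi\in\mathrm{Diff}_1(U')$: on $S\cap U$ set $\psi=\mathrm{Id}$ and $\p_t\psi=\mathrm{Id}$ (agreement to first order), and on $\cC\cap U$ set $\psi=\mathrm{Id}$ (Dirichlet, agreement to zeroth order). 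Third, I would invoke local well-posedness of this IBVP for the quasilinear hyperbolic system — the same theory of hyperbolic IBVPs the paper relies on for the systems \eqref{main3}-\eqref{b3} — to produce a solution $\psi$ on some open $U'\subset U$ covering $S\cap U$; here the Dirichlet character of the boundary condition makes the estimates standard. Fourth, since the initial data agree with the identity to first order and $\psi$ depends continuously on the data, $\psi$ is $C^1$-close to the identity near $S\cap U$, hence a diffeomorphism onto its image after possibly shrinking $U'$, so $\psi\in\mathrm{Diff}_1(U')$.

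The main obstacle I expect is twofold. The analytic obstacle is verifying that the IBVP for $\psi$ with the prescribed Dirichlet boundary condition on the timelike face $\cC\cap U$ and the first-order Cauchy data on $S\cap U$ is genuinely well-posed and admits corner-compatible data: one must check that the identity map satisfies the zeroth-order compatibility conditions at the corner $\Si\cap U$ (which it does trivially, since $\mathrm{Id}$ is an exact solution of the homogeneous geometry, and the required $C^1$ regularity only needs $C^0$ corner compatibility of the data). The geometric obstacle is ensuring the resulting $\psi$ is actually a diffeomorphism, not merely a map; this is where the first-order agreement with the identity on $S\cap U$ is essential, giving $d\psi=\mathrm{Id}$ along $S\cap U$ and hence invertibility of $d\psi$ on a neighborhood, so that $\psi$ is a local diffeomorphism and, after restricting to a smaller $U'$, a diffeomorphism onto its image. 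I would emphasize that the domain $U'$ may be strictly smaller than $U$ but can always be taken to cover the whole initial slice $S\cap U$, as asserted in the statement.
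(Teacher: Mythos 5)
There is a genuine gap, and it sits in your very first chain of equivalences. The condition $V_{\psi^*g}(\chi)=0$, i.e.\ $\Box_{\psi^*g}x^{\a}=0$, says that the \emph{fixed} coordinate functions $x^{\a}$ are harmonic for the metric $\psi^*g$; it does \emph{not} say that $\chi\circ\psi$ is harmonic, nor that $\psi$ is a harmonic map from $(U',\psi^*g)$ to flat space. Indeed, since $\psi:(U',\psi^*g)\to(\psi(U'),g)$ is an isometry, the wave operator is natural under it:
\begin{equation*}
\Box_{\psi^*g}\big(x^{\a}\circ\psi\big)=\big(\Box_g x^{\a}\big)\circ\psi ,
\end{equation*}
(one can also check this directly in coordinates, where all derivatives of $\psi$ cancel via the Piola identity $\p_\mu\big[|\det D\psi|\,((D\psi)^{-1})^\mu_{\ \g}\big]=0$). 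Consequently the equation you propose to solve, $\Box_{\psi^*g}\psi^{\a}=0$ with $\psi^{\a}=x^{\a}\circ\psi$, is not a quasilinear hyperbolic system for $\psi$ at all: it degenerates to the pointwise condition $(\Box_g x^{\a})(\psi(q))=0$, which constrains $g$ rather than $\psi$ and has no solution unless the chart $\chi$ was already $g$-harmonic on $\psi(U')$. So the third step of your plan (invoking well-posedness of this ``system'') cannot be carried out; there is nothing to solve.

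The repair is to transpose the equation to the inverse map, which is exactly what the paper does. By the same naturality, $\Box_{\psi^*g}x^{\a}=\big(\Box_g(x^{\a}\circ\psi^{-1})\big)\circ\psi$, so \eqref{wV} holds if and only if the functions $\w x^{\a}:=x^{\a}\circ\psi^{-1}$ are $g$-harmonic. Since $g$ is \emph{fixed}, this is a \emph{linear} wave equation: one solves $\Box_g\w x^{\a}=0$ with data $\w x^{\a}=x^{\a}$, $\p_t\w x^{\a}=\p_t x^{\a}$ on $S\cap U$ and the Dirichlet condition $\w x^{\a}=x^{\a}$ on $\cC\cap U$ (a standard well-posed linear IBVP), and then defines $\psi$ as the transition map determined by $\w x^{\a}\circ\psi=x^{\a}$. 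Your remaining ingredients do survive this transposition: your choice of initial/boundary data corresponds precisely to the paper's data for $\w x^{\a}$, and your final argument --- first-order agreement with the identity on $S\cap U$ forces $D\psi=\mathrm{Id}$ there, hence $\psi$ is a diffeomorphism on a neighborhood $U'$ of $S\cap U$ and lies in $\mathrm{Diff}_1(U')$ --- is the same as the paper's. But the problem is linear, not quasilinear, once the equation is written for the correct map.
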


\begin{proof} 
Given the background coordinates $x^{\a}$ on $U$, define new coordinates $\w x^{\a}$ by solving the wave equations 
$$\Box_g \w x^{\a} = 0,$$
with the same initial and boundary conditions as that formed by $x^{\a}$, i.e. $\w x^{\a}= x^{\a},\partial_{t}\w x^{\a}=\partial_{t}x^{\a}$ on $U\cap S$ and $\w x^{\a}=x^{\a}$ on $\cC\cap U$, for $\a=0,1,2,3$. This a well-posed IBVP for the simple linear wave equation, and so there is a unique solution in $\cD^+(\Upsilon\cap U,g)$. Based on the initial and boundary conditions of $\w x^{\a}$, there is an open subdomain $U'\subset U$ with $S\cap U'=S\cap U$ such that the map $\psi:U'\to U$ given by
$$\tilde x^{\a}\circ{\psi(q)} = x^{\a}(q)\ \ \forall q\in U,~\a=0,1,2,3,$$
is well-defined and $\psi$ is a diffeomorphism from $U'$ onto its image $\psi(U')$. Clearly $\psi \in \mathrm{Diff}_1(U')$, i.e. $\psi|_{\Upsilon\cap U'}={\rm Id}_{\Upsilon\cap U'}$ and on $S\cap U'$, $D\psi={\rm Id}$. Then the pull-back $\w g = \psi^*g$ satisfies \eqref{wV}. Moreover, it is proved in \cite{Planchon-Rodnianski}, see also \cite{Parlongue:2011}, that for $g \in H^s(S_t)$, the new background coordinates $\w x^{\a} \in H^{s+1}(S_t)$. 
\end{proof}

\subsection{Localization and linearization of the expanded systems} 

The system \eqref{main3}-\eqref{b3} is a quasi-linear hyperbolic system with mixed-type boundary conditions. The well-posedness of such an IBVP rests upon analyzing the behavior in small regions, linearized around a point $p \in \Si$, (the frozen coefficient method). We next discuss in detail how this localization is done in the current setting. 

For convenience, consider first a special case. Let ${\bf R}_0$ be a copy of the standard corner domain ${\bf R}$ with coordinates $\{x_0^\a\}_{\a=0,1,2,3}~(x_0^0=t_0)$, i.e. ${\bf R}_0=\{t_0\geq 0,~x_0^1\leq 0\}$; and let $\bar g_{0R} = dt_0^2 + \sum_{i=1}^3 (dx_0^i)^2$ be the standard Euclidean metric on $\bR^4\supset{\bf R}_0$. The pair $(\bar g_0, \bar F_0)$ consisting of the Minkowski metric $\bar g_0=-dt^2+\sum_{i=1}^3 (dx^i)^2$ on ${\bf R}$ and the identity map $\bar F_0={\rm Id}:({\bf R},\bar g_0)\to({\bf R}_0,\bar g_{0R})$, given by $x_0^\a\circ \bar F_0=x^\a$, is then the unique solution to \eqref{main3}-\eqref{b3} on ${\bf R}$ with Cartesian initial-boundary data $(\bar I_0,\bar B_0)$ given by
\begin{equation*}
\begin{split}
&\bar I_0=\big( q_{\alpha\beta}=\eta_{\alpha\beta},~k_{\alpha\beta}=0,~E_0={\rm Id}_{\{t_0=0\}},~E_1=\p_{t_0}\big),\\
&\bar B_0=\big(G={\rm Id}_{\{x_0^1=0\}},~[\s]=[\delta_{AB}],~\Theta = \partial_{t_0}+\partial_{x_0^1}\big).
\end{split}
\end{equation*}
Here $\eta_{\alpha\beta} =\text{diag}(-1,1,1,1)$ and $\delta_{AB}=\text{diag}(1,1)$.  

More generally, let $g_{0R} = (g_{0R})_{\a\b}dx_0^{\a}\cdot dx_0^{\b}$ be a complete flat Riemannian metric on ${\bf R}_0$, with $(g_{0R})_{\a\b}$ being constant functions. Then the pair $(g_0, F_0)$ consisting of a flat Lorentz (Minkowski-type) metric $g_0= (g_0)_{\a\b}dx^{\a}dx^{\b}$ and a linear map $F_0 = L: ({\bf R},g_0) \to ({\bf R}_0,g_{0R})$ given by $x_0^\a\circ L=L_{\a\b}\cdot x^\b$ with constant coefficients $ (g_0)_{\a\b},L_{\a\b}$, is the unique solution to \eqref{main3}-\eqref{b3} on ${\bf R}$ with flat initial-boundary data $(I_0,B_0)$ given by
\be \label{flat}
\begin{split}
&I_0=\big(q_{\a\b}=(g_0)_{\a\b},~k_{\a\b}=0,~E_0=L_0,~E_1=L_1\big)\\
&B_0=\big(G=G_0,~[\s]=[\s_0],~\Theta = \Theta_0\big).
\end{split}
\ee 
In the above, $L_0$ is the restriction $L|_{\{t=0\}}$ of the linear map $L$, and $L_1$ is the constant vector field determined by $L$ and $g_0$, i.e. $L_1=L_*(N_{g_0})$. Because the wave map $F$ sends boundary hypersurfaces to themselves, here we require $L: \{t= 0\} \to \{t_0= 0\}$, $L: \{x^1 = 0\} \to \{x_0^1 = 0\}$. Then it follows that $L$ maps the level sets $\{t=\text{constant}\}$ in ${\bf R}$ to the level sets $\{t_0=\text{constant}\}$ in ${\bf R}_0$, since $L$ is a constant linear map. Moreover, $G_0$ is the restriction $L|_{\{x^1=0\}}$ of the linear map $L$, $\s_0$ is pointwisely conformal to the flat metric $((L^{-1})^*g_0)^\intercal$ on $\{t_0=\text{constant}, x_0^1=0\}$, and $\Theta_0$ is the constant vector field given by $L_0=L_*(T_{g_0}+\nu_{g_0})$.

We now show that for a general choice of initial-boundary data $(I,B)$ in \eqref{i3}-\eqref{b3}, the IBVP \eqref{main3}-\eqref{b3} can be reduced to a problem with initial-boundary data sufficiently close to the flat data \eqref{flat} above.

Suppose $I=(q,k,E_0,E_1),~B=(G, [\g],\Theta)$ are arbitrary data given in a neighborhood of a corner point $p \in \Si$ which satisfies the compatibility conditions. The map $E_0$ maps $p$ to $p_0=E_0(p) \in \Si_0$. Choose a pair $\chi$, $\chi_0$ of standard corner charts at $p,p_0$, so $\chi(p) = 0$ and $\chi_0(p_0) = 0$. Let $x^{\a}$ denote the coordinates in $\chi$ and $x_0^{\a}$ the coordinates in $\chi_0$. Now choose the new chart $\w\chi$ with coordinates $\tilde x^{\a} = \l^{-1}x^{\a}$ near $p$ and correspondingly $\w\chi_0$ with $\tilde x^{\alpha}_0=\lambda^{-1}x^{\alpha}_0$ near $p_0$, with $\lambda$ being a positive real number. Define new initial data $\w I=(\w q,\w k,\w E_0,\w E_1)$ in the following way
\begin{equation*}
\begin{split}
\w q_{\alpha\beta}(\tilde x)=q_{\alpha\beta}(\lambda\tilde x),~
\w k_{\alpha \beta}(\tilde x)=\lambda k_{\alpha\beta}(\lambda\tilde x),~
\w E_0^{ \alpha}(\tilde x)=\lambda^{-1}E_0^{\alpha}(\lambda\tilde x),~
\w E_1^{ \alpha}(\tilde x)=E_1^{\alpha}(\lambda\tilde x).
\end{split}
\end{equation*}
Here $\w q_{\alpha\beta}(\tilde x)$ denotes the component of $\w q$ at the point $\tilde x$ in the new chart $\w\chi$, i.e. $\w q_{\alpha\beta}(\tilde x)=\w q(\p_{\tilde x^\a},\p_{\tilde x^\b})|_{\tilde x}$. It equals to the corresponding component $q_{\alpha\beta}(x)= q(\p_{x^\a},\p_{x^\b})|_{x}$ of $q$ at the point $x=\lambda\tilde x$ in the original chart $\chi$. The same meaning applies to the defining equations of $\w k$. Furthermore, $\w E_0^{\alpha}(\tilde x)$ denotes the $\tilde x_0^{\a}$ component of the image $\w E_0(\tilde x)$ in the new chart $\w\chi_0$, i.e. $\w E_0^{ \alpha}(\tilde x)=\tilde x_0^\a\circ \w E_0 (\tilde x)$. It equals to the rescaled ($\lambda^{-1}$) component $E^{\a}_0(\lambda \tilde x)=x_0^\a\circ E_0(\lambda \tilde x)$ expressed in the chart $\chi_0$. The equation for $\w E_1$ means that $\w E_1$ assigns the point $\tilde x$ with the vector $\w E^{\a}_1(\tilde x)\p_{\tilde x_0^\a}$ at the image point $\w E_0(\tilde x)$.  The coefficient function $\w E^{\a}_1(\tilde x)$ equals to $E^{\a}_1(\l\tilde x)$, which is the coefficient of $E_1$ in the chart $\chi_0$, i.e. $E_1(\l\tilde x)=E^{\a}_1(\l\tilde x)\p_{x_0^\a}$. 

Notice that when $\lambda$ is very small, $\w q$ is very close to the Minkowski-type metric $g_0= q_{\a\b}(0)d\tilde x^{\a} d\tilde x^{\b}$ where the components of the metric in $\w\chi$ are constants $(g_0)_{\a \b}=q_{\a\b}(0)$; similarly $\w k$ is very close to zero. Moreover, as $\l \to 0$, the map $\w E_0$ approaches to the constant linear map $L_0:\{\tilde t=0\}\to\{\tilde t_0=0\}$ given by $\tilde x_0^i\circ L_0=(L_0)_{ij} \cdot \tilde x^j~(i,j=1,2,3)$ where $(L_0)_{ij}$ equals to the constant coefficient of the linearization of $E_0:\{t=0\}\to\{t_0=0\}$ at the origin, i.e. ~$(L_0)_{ij}=\p_{x^j}(x_0^i\circ E_0)|_{x=0}$. Since $E_0$ maps the corner to the corner, the map $L_0$ must also map the corner $\{\tilde t=\tilde x^1=0\}$ to the corner $\{\tilde t_0=\tilde x_0^1=0\}$. At the same time $\tilde E_1$ becomes close to the constant vector field $L_1=E_1|_{x=0}$.

Similarly define new boundary data $\w B=(\w G, [\w \s], \w\Theta )$ as 
\begin{equation*}
\begin{split}
\w G^{\alpha}(\tilde x)=\lambda^{-1}G^{\alpha}(\lambda\tilde x),~
\w \s_{ A B}(\tilde x_0)=\s_{AB}(\lambda\tilde x_0),~
\w \Theta^{ \alpha}(\tilde x_0)=\Theta^{\alpha}(\lambda\tilde x_0).
\end{split}
\end{equation*}
It is easy to check that when $\lambda\to 0$, one has $\w G\to G_0$ where $G_0:\{\tilde x^1=0\}\to\{\tilde x_0^1=0\}$ is given by $\tilde x_0^\a\circ G_0=\big(\p_{x^\b}(x_0^\a\circ G)|_{x=0}\big)\cdot \tilde x^\b~(\a,\b=0,2,3)$ in analogy to the limiting approach of $\w E_0$ above. In addition, since $G$ maps the corner to the corner, the map $G_0$ also maps the corner $\{\tilde t=\tilde x^1=0\}$ to the corner $\{\tilde t_0=\tilde x_0^1=0\}$. 
At the same time, as $\l\to 0$, $\w\s\to \s_0= \s|_{x_0=0}$ and $\w\Theta\to\Theta_0=\Theta|_{x_0=0}$. 

Moreover, based on the compatibility conditions \eqref{cib2}, $L_0=G_0$ on $\{\tilde t=\tilde x^1=0\}$. Thus there is a unique constant linear transformation $L:{\bf R}\to{\bf R}_0$ such that $ L|_{\{\tilde t=0\}}=L_0$ and $ L|_{\{\tilde x^1=0\}}=G_0$. 
Compatibility equation \eqref{cib4} further gives $G_*(N_g)=E_1$ at $p$, so the limit $L_1$ of $\w E_1$ (as $\lambda\to0$) satisfies $L_1= L_*(N_{g_0})$. 
For the same reason, the limit of $\w \Theta$ satisfies $\Theta_0 = L_*(T_{\tilde g_0} + \nu_{\tilde g_0})$ and the limit of $\w \s$ satisfies $[\s_0]=[\big((L^{-1})^*g_0\big)^\intercal]$. 

The analysis above shows that the rescaled initial-boundary data $(\w I,\w B)$ limits to a set of flat initial-boundary data which is exactly of the form \eqref{flat}. Meanwhile, the same rescaling process is also applied to the given background Riemannian metric $g_R$ on $U \subset M$, i.e. set $(\w g_R)_{\a \b}(\tilde x_0)$ in the chart $\w\chi_0$ equal to $(g_R)_{\a\b}(\lambda \tilde x_0)$ in the chart $\chi_0$. It then limits to a flat metric $g_{0R}= (g_R)_{\alpha\beta}(0)d\tilde x_0^{\a} d\tilde x_0^{\b}$ as $\l \to 0$. 

One may now set up a hyperbolic system in the same way as \eqref{main3}-\eqref{b3} in the new charts $\w\chi,\w\chi_0$ with initial-boundary data $(\w I,\w B)$ constructed above in a fixed size neighborhood $\w U$ of $p$ and with the rescaled background metric $\w g_R$. Choose $\l$ small enough so that the data $(\w I,\w B)$ is sufficiently close to flat-type data as described above, and the terms in the wave equation of $F$ with coefficients contributed by $\Gamma_{\w g_R}$ are close to zero. If $(\w g, \w F)$ is a solution of this rescaled IBVP in a (possibly smaller) domain $\w U$, then the pair $g(x)=\l^2\w g(\l^{-1}x)~F(x)=\w F(\lambda^{-1}x)$ solves the system \eqref{main3}-\eqref{b3} with the original initial-boundary data $(I,B)$ and background metric $g_R$. Thus there is a one-to-one correspondence between such local solutions in sufficiently small neighborhoods of corner points $p \in \Si$. 

For the rest of this section, we assume that the initial-boundary data $(I, B)$ has been localized as above, so they are close to flat data. To analyse the solvability of the system \eqref{main3}-\eqref{b3} near flat data $(g_0,F_0)$ on $\bf R$ (with the flat Riemannian metric $g_{0R}$ on ${\bf R}_0$), we first consider its linearization at $(g_0,F_0)$. Without loss of generality (by linear transformation of the chart), we can assume $g_0=-(dt)^2+\sum_{i=1}^3(dx^i)^2$ in the standard corner chart $\chi$ at $p$. For convenience, here we make the assumption that $\cC\cap U$ is orthogonal to $S\cap U$ with respect to the flat metric $g_0$. In particular, one has then $T_{g_0}=\partial_t$ and $\nu_{g_0}=\partial_{x^1}$. This assumption, although not necessary, simplifies some of the computations to follow in \S 3. As noted in \cite{Kreiss-Winicour:2014}, it can always be realized by choosing a new initial slice $S' \subset \cV \subset M$, with $\p S' = \p S = \Si$, where $\cV$ is the (maximal) Cauchy development of the initial data set $(S, I)$; { and well-posedness of the IBVP in the orthogonal case implies well-posedness in general}.  

In the following $(h,f)$ denotes an infinitesimal deformation at $(g_0,F_0)$, i.e. $h=\tfrac{d}{ds}|_{s=0}g(s), f=\tfrac{d}{ds}|_{s=0}F(s)$ for a smooth family of solutions $(g(s),F(s))$ starting at $g(0)=g_0, F(0)=F_0$. We will use $'$ to denote the linearization of geometric quantities at $(g_0,F_0)$. For example, $\Box'_h=\tfrac{d}{ds}|_{s=0} \Box_{g(s)}$ is the linearization of the wave operator. 
It is well-known that the linearization of the first equation in \eqref{main3} at $g_0$ is given by the standard system $\Box_{g_{0}}h_{\a\b} = 0$ of wave equations. Here and in the following, we work in ${\bf R}$ with harmonic coordinates $x^{\a}$ and use the abbreviation $\p_{\a}=\p_{x^\a}$. For the second equation in \eqref{main3}, observe that the linearization of the term involving $\Gamma_{g_R}(F)$ with $g_R=g_{0R}$ vanishes and 
\be \label{Flin}
\begin{split}
&(\Box_{g}F+F_*(V_g))'_{(h,f)}=\Box_{g_0}f+\Box'_hF_0+(F_0)_*(V'_h)\\
&=\Box_{g_0}f+g_0(D_{g_0}^2F_0,h)-g_0(dF_0,\beta_{g_0}h)+(F_0)_*(V'_h) =\Box_{g_0}f,
\end{split}
\ee
where we use the fact that $V_{g_0}= 0$, $V'_h=\beta_{g_0}h$ and $D^2_{g_0}F_0=0$. Thus, writing $f^\a=x_0^\a\circ f$, the linearization of the system \eqref{main3}-\eqref{b3} at $(g_0,F_0)$ is given by:
$$
\Box_{g_0} h_{\a\b}=0,\ \ \Box_{g_0} f^\a=0\ \ (\a,\b=0,1,2,3)
\quad\text{in }\mathbf R
$$
with initial and boundary conditions 
\be \label{lini1}
\begin{split}
\begin{cases}
h_{\a\b}=q'_{\a\b},~(\tfrac{1}{2}\cL_{T^0_g}g)'_h=k',~ V'_h = 0\\
f=E'_0, \ \ \big(F_*(N_g)\big)'_{(h,f)}=E'_1
\end{cases}
\quad\text{on }\{t=0\},
\end{split}
\ee
\be \label{linb1}
\begin{split}
\begin{cases}
V'_h=0\\
f=G' \\
([g_F^\intercal])'_{(h,f)}=[\s']\\
\big(F_*(T_g+\nu_g)\big)'_{(h,f)}=\Theta'
\end{cases}
\quad\text{on }\{x^1=0\}.
\end{split}
\ee

We discuss in more detail the linearization of the initial and boundary conditions. 
{ In the above, we use $\{(q',k',E_0',E_1'),(G',\s',\Theta')\}$ to denote a deformation of the local initial-boundary data $(I,B)$ and we assume that these quantities are extended to be defined on the entire boundary hypersurfaces $\{t=0\}$ or $\{x^1=0\}$ of $\mathbf R$ by composing with a compactly supported bump function on the boundary hypersurface $\Upsilon\cap U$ which equals to 1 near the fixed corner point $p\in\Si$.}

To begin, linearization of the unit normal vector $T_g^0$ on the initial surface is given by $(T_g^0)'_h=\tfrac{1}{2}h_{00}\p_0-h_{0i}\p_i=\tfrac{1}{2}q'_{00}\p_0-q'_{0i}\p_i$, which is uniquely determined by the data $q'$. Furthermore, the equation $(\tfrac{1}{2}\cL_{T^0_g}g)'_h=k'$ means $\tfrac{1}{2}\cL_{\p_0}h=-\tfrac{1}{2}\cL_{(T_g^0)'_h}g_0+k'$. It follows that the first two equations in \eqref{lini1} are equivalent to prescribing the quantities $h_{\a\b}$ and $\p_t h_{\a\b}$ on $\{t=0\}$. Similarly, since $\big(F_*(N_g)\big)'_{(h,f)}=\p_t f+F_*\big((N_g)'_h\big)$ and the linearization of $N_g$ on the initial surfaces depends only on the information $h_{\a\b}=q_{\a\b}$ on $\{t=0\}$, the last two equations in \eqref{lini1} are equivalent to prescribing the quantities $f^\a,~\p_tf^\a$ on $\{t=0\}$.

As for the boundary equations, first notice that since $G'$ maps the boundary $\{x^1=0\}$ to $\{x_0^1=0\}$, we must have $f^1=0$ on $\{x^1=0\}$. Since the conformal class of a 2-dim metric $g_F^\intercal$ uniquely determines $(\det g_F^\intercal)^{-1/2}g_F^\intercal$, the third equation in \eqref{linb1} is equivalent to
$$(g_F^\intercal)'-\tfrac{1}{2}(\tr_{g_{0F}^\intercal}(g_F^\intercal)')g_{0F}=\sqrt{\det g_{0F}^\intercal}\big((\det \s)^{-1/2}\s\big)'.$$
Here $g_{0F}^\intercal$ denotes the 2-dim Riemannian metric induced by the pull back metric $(F_0^{-1})^*g_0$ on $\{t_0={\rm constant}\}\cap \{x_0^1=0\}$. The linearization of $g_F^\intercal$ is given by $(g_F^\intercal)'=\big((F_0^{-1})^*h\big)^\intercal+(\cL_{f}g_{0F})^\intercal$. Here $\cL_{f}$ denotes the Lie derivative with respect to the deformation $f$. Since $F_0$ is a constant linear map sending $\{t=\text{constant}\}$ to $\{t_0=\text{constant}\}$, we have $((F_0^{-1})^*h)^\intercal=(F_0^{-1})^*(h^\intercal)$. Thus the linearized equation above is equivalent to prescribing 
\be\label{linconformal}
h^\intercal-(\tr_{g_0}h^\intercal)g_0^\intercal=F_0^*\{\sqrt{\det g_{0F}^\intercal}\big((\det \s)^{-1/2}\s\big)'-(\cL_fg_{0F})+({\rm div}_{g_{0F}^\intercal}f)g_{0F}\} \quad \mbox{ on } \{x^1=0\}.
\ee 
Recall that the superscript $^\intercal$ denotes the reduction of a tensor field on $M_0$ (or $\cC_0$) to the tangent space of the 2-dim time level sets  $\Si_\tau$ of $\cC_0$.

The last equation in \eqref{linb1} can be expanded as
$$(F_0)_*(T'_h+T'_{f}+\nu'_h)+\cL_{f}(T_0+\nu_0)=\Theta'.$$ 
This equation is further equivalent to prescribing 
\be\label{lintn}
T'_h+\nu'_h=(F_0^{-1})_*[-(F_0)_*(T'_f)-\cL_f(T_0+\nu_0)+\Theta'] \ \ \mbox{ on } \{x^1=0\}.
\ee 
Linearization of the normal vector $T_g$ is given by $T'_{(h,f)}=T'_h+T'_f$, where $T'_h=\frac{1}{2}h_{00}\partial_0-h_{0i}\partial_i$ (cf.~\S 6.2) and $T'_f$ denotes the variation of $T$ caused by the variation of the time level sets $F^{-1}(S_{\tau})$.  In addition, since $\nu_g$ does not depend on the foliation, linearization of $\nu_g$ only depends on $h$, i.e. $\nu'_{(h,f)}=\nu'_h=h_{10}\partial_0-\frac{1}{2}h_{11}\partial_1-h_{1A}\partial_A$.  All the subscript indices here are with respect to the standard coordinates $x^{\a}$. 

The analysis above leads to the following result. 

\begin{proposition}
Let $g_0 = \eta$ be the standard Minkowski metric on ${\bf R}$ and $F_0$ be a constant linear map $F_0:({\bf R},g_0)\to ({\bf R}_0,g_{0R})$. The  frozen coefficient or blow-up linearization of the equations \eqref{main3}-\eqref{b3} at a flat solution $(g_0, F_0)$ { with respect to the deformation $I'=(q',k',E_0',E_1')~B'=(G',\s',\Theta')$ of local initial-boundary data}, near a corner point $p \in \Si$, may be written in the form 
\be \label{linmain11}
\begin{split}
\begin{cases}
\Box_{g_0} h_{\a\b}=0\\
\Box_{g_0} f^\a=0\\
\end{cases}
\quad\text{in }\mathbf R
\end{split}
\ee
with initial and boundary conditions 
\be \label{lini11}
\begin{split}
\begin{cases}
h_{\a\b}=q'_{\a\b},~\partial_{t} h_{\a\b}=u_{\a\b},~V'_h = 0\\
f^\a=(E'_0)^\a, \ \ \p_{t}f^\a = e^\a\\
\end{cases}
\quad\text{on }\{t=0\}
\end{split}
\ee
and
\be \label{linb11}
\begin{split}
\begin{cases}
V'_h=0\\
f^1= 0\\
f^\rho=(G')^\rho~(\rho=0,2,3)\\
h_{22} - h_{33} = c_2\\
h_{23} = c_3\\
\tfrac{1}{2}h_{00} + h_{10} = b_{0}\\
h_{01} + \tfrac{1}{2}h_{11} = b_{1}\\
h_{0A} + h_{1A} = b_{A}\ \ A=2,3\\
\end{cases}
\quad\text{on }\{x^1=0\}.
\end{split}
\ee
In the system above, the terms $u_{\a\b}, e^a$ in \eqref{lini11} are uniquely determined by the deformation $(q',k',E'_0,E'_1)$; the terms $c_A~(A=2,3)$ are computed based on the right-side term of \eqref{linconformal}, so they are determined by the deformation $\s'$ and $f$; and $b_\a~(\a=0,1,2,3)$ are determined by the right-side term of \eqref{lintn}, which depends on $\Theta'$ and $f$.
\end{proposition}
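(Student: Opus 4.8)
The proposition is proved by a direct, term-by-term linearization of the initial conditions \eqref{i3} and the boundary conditions \eqref{b3} at the flat solution $(g_0,F_0)$, since the two bulk equations in \eqref{linmain11} are already in hand: the first is the standard fact that the linearization of $\Ric_g+\d^*_gV_g$ at $g_0$ is $\Box_{g_0}h_{\a\b}$, and the second is exactly the computation \eqref{Flin}, where the terms coming from $\Gamma_{g_{0R}}(F_0)$, $D^2_{g_0}F_0$ and $(F_0)_*(V'_h)$ all drop out. The guiding principle throughout is to keep all $h$-components on the left-hand side and to collect every contribution of the wave-map deformation $f$, together with the prescribed data $(q',k',E'_0,E'_1,G',\s',\Theta')$, into the source functions $u_{\a\b}, e^\a, c_A, b_\a$.

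For the initial conditions I first linearize the future unit normal, obtaining $(T^0_g)'_h=\tfrac12 q'_{00}\p_0-q'_{0i}\p_i$, which depends only on $q'$. Substituting this into $(\tfrac12\cL_{T^0_g}g)'_h=k'$ rewrites $\tfrac12\cL_{\p_0}h$ in terms of $k'$ and $(T^0_g)'_h$, so the first line of \eqref{lini1} is equivalent to prescribing $h_{\a\b}=q'_{\a\b}$ together with $\p_t h_{\a\b}=u_{\a\b}$, with $u_{\a\b}$ determined by $(q',k')$. Since $(N_g)'_h$ on $\{t=0\}$ depends only on the restriction $h_{\a\b}=q'_{\a\b}$, the relation $\big(F_*(N_g)\big)'_{(h,f)}=\p_t f+(F_0)_*\big((N_g)'_h\big)=E'_1$ reduces the second line of \eqref{lini1} to prescribing $f^\a=(E'_0)^\a$ and $\p_t f^\a=e^\a$, giving the form \eqref{lini11}.

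The boundary conditions in \eqref{linb1} are then handled one at a time. The gauge condition $V'_h=0$ is retained unchanged. Because $G'$ carries $\{x^1=0\}$ into $\{x_0^1=0\}$, the Dirichlet condition $f=G'$ forces $f^1=0$ and $f^\rho=(G')^\rho$ for $\rho=0,2,3$. For the conformal condition, the linearized identity \eqref{linconformal} exhibits the trace-free part of $h^\intercal$ on $\{x^1=0\}$; in the coordinates adapted to the flat metric the two independent entries of a trace-free symmetric $2\times2$ tensor are $h_{22}-h_{33}$ and $h_{23}$, yielding $h_{22}-h_{33}=c_2$ and $h_{23}=c_3$, with the right-hand side of \eqref{linconformal} (which involves $\s'$ and $f$) fixing $c_2,c_3$. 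Finally, for the vector-field condition I insert the explicit linearizations $T'_h=\tfrac12 h_{00}\p_0-h_{0i}\p_i$ and $\nu'_h=h_{10}\p_0-\tfrac12 h_{11}\p_1-h_{1A}\p_A$ from \S 6.2 into \eqref{lintn}; reading off the $\p_0$, $\p_1$ and $\p_A$ components of $T'_h+\nu'_h$, and absorbing the signs in the $\p_1$ and $\p_A$ entries into the source terms, produces precisely $\tfrac12 h_{00}+h_{10}=b_0$, $h_{01}+\tfrac12 h_{11}=b_1$ and $h_{0A}+h_{1A}=b_A$, where the $b_\a$ collect the tangential variation $T'_f$, the Lie-derivative term $\cL_f(T_0+\nu_0)$ and $\Theta'$.

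The one point that needs care---and the main obstacle---is verifying that each boundary relation genuinely decouples into an $h$-part on the left and an $(f,\text{data})$-part on the right, so that the left-hand sides are exactly the listed $h$-components. This rests on two structural facts at the flat solution: that $\nu_g$ is independent of the foliation $F^{-1}(S_\tau)$, so that $\nu'_{(h,f)}=\nu'_h$ involves $h$ alone; and that the foliation-dependent piece $T'_f$ of the timelike normal involves only $f$ and the prescribed data, never $h$. Granting this separation, every $f$-contribution can be moved to the right-hand side without reintroducing any $h$-component, and the identifications of $u_{\a\b}, e^\a, c_A, b_\a$ with the quantities described in the statement are then immediate.
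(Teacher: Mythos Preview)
Your argument is correct and follows essentially the same approach as the paper: the proposition is stated there as a summary of the term-by-term linearization carried out in the text immediately preceding it, and you have reproduced that analysis faithfully, including the use of \eqref{Flin} for the bulk $f$-equation, the reduction of the initial conditions via $(T^0_g)'_h$ and $(N_g)'_h$, the derivation of $c_A$ from \eqref{linconformal}, and the componentwise reading of $T'_h+\nu'_h$ from \eqref{lintn}. Your additional remark on why the $h$- and $f$-contributions decouple (namely that $\nu'_{(h,f)}=\nu'_h$ and that $T'_f$ involves only $f$) makes explicit a point the paper leaves implicit.
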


The compatibility or corner conditions for $(h, f)$ at $\Si$ are somewhat simpler to express in the system \eqref{linmain11}-\eqref{linb11}. 
Thus, the $C^0$ and $C^1$ compatibility conditions for $f$ are
\be \label{comp}
(G')^A= (E_0')^A~(A=2,3), \ \ \p_{t}(G')^\rho = e^\rho~(\rho=0,2,3)\ \ {\rm on} \ \ \{t=x^1=0\}.
\ee
At $2^{\rm nd}$ order and $3^{\rm rd}$ order, $\p_{t}^2 G' = \D E_0'$ and $\p_{t}^3G' = \D e$ on $\{t=x^1=0\}$ respectively and thus similarly at higher order. The $C^0$ compatibility conditions for $h$ are $b_0=\tfrac{1}{2}q'_{00}+q'_{10}$,$b_1=q'_{01}+\tfrac{1}{2}q'_{11}$ and $b_{A} = q'_{0a}+q'_{1A}~(A=2,3)$. The terms $c_A$ are determined by the trace-free part of $q'_{AB}$. The $t$-derivatives of $b_{\a}$ and $c_A$ are similarly determined by $u_{\a\b}$. One may compute the higher order compatibility conditions in a similar way.  

\begin{remark}\label{choice_T}
{\rm A detailed analysis of the linearized IBVP \eqref{linmain11}-\eqref{linb11} is given in the next section.
Here we note that in the last boundary condition of \eqref{b1} one cannot replace the unit normal $T_g$ of the time level sets in $M$ by the unit timelike normal $T_g^c$ of the time level sets in $\cC$. The linearization of $T_g^c$ at $(g_0,F_0)$ only involves $\frac{1}{2}h_{00}\p_0-h_{0A}\p_A$, i.e.~the term $h_{01}$ does not appear in the linearization; and this will be problematic for the energy estimates of the linearized system.}
\end{remark}
 
\medskip   

Next we provide a similar discussion for the system \eqref{main2}-\eqref{b2}. In analogy to \eqref{main3}-\eqref{b3}, we first modify the system with harmonic gauge in a local corner neighborhood $U$ as
\be \label{main4}
\begin{split}
\begin{cases}
\Ric_g+\delta_g^*V_g=0\\
\Box_g F + \Gamma_{g_R}(F)g(\nabla F, \nabla F) + F_*(V) = 0
\end{cases}
\quad\text{in }U
\end{split}
\ee
with initial conditions:
\be \label{i4}
\begin{split}
\begin{cases}
g = q, \ \ {\tfrac{1}{2}}\cL_{T^0_g}g = k, \ \ V_g = 0 \\
F=E_0,~F_*(N_g) = E_1\\
\end{cases}
\quad\text{on }S\cap U
\end{split}
\ee
and boundary conditions:
\be \label{b4}
\begin{split}
\begin{cases}
V_g=0\\
F = G\\
[g_F^\intercal]=[\s]\\
H_{g_F}=H\\
[F_*(T_g^c+\nu_g)]^T=\Theta_{\cC}
\end{cases}
\quad\text{on }\mathcal C\cap U.
\end{split}
\ee
Recall here from \eqref{prj} that $[F_*(T_g^c+\nu_g)]^T=F_*(T_g^c+\nu_g)-g_R(F_*(T_g^c+\nu_g),\nu_{g_R})\nu_{g_R}$, where $\nu_{g_R}$ denotes the outward unit normal to $\cC_0\subset(M_0,g_R)$. { Here the initial-boundary data $({\bf I, B_{\cC}})$ as defined in \eqref{I2}-\eqref{B2} and the local extended initial-boundary data $I=(q,k,E_0,E_1)$, $B_\cC=(G,[\s],H,\Theta_\cC)$ must satisfy the same conditions for $(\bf I,\bf B)$ and $(I,B)$ as discussed below \eqref{harmE}. For the compatibility conditions, $({\bf I, B_{\cC}})$ should satisfy the same equations \eqref{cib1}-\eqref{cib2} as for $(\bf I,\bf B)$. In addition, we choose $g_R$ on $M_0$ so that $S_0$ is orthogonal to $\cC_0$ along the corner $\Si_0$ and impose the extra condition}
\be \label{ccorner}
(E_0)_*(n_\g) \in \text{span}\{\nu_{g_R}\} \ \ \mbox{ on }\Si_0
\ee
together with the compatibility condition that
\be\label{cibc3}
\Theta_\cC=\ell E_1  \ \ \mbox{ on }\Si_0,
\ee
where $\ell$ is a function on $\Si_0$ such that $0<\ell<\tfrac{1}{\sqrt{2}}$.
In analogy to \eqref{cib4}, the local extended initial-boundary data $(I, B_{\cC})$ should satisfy 
\be\label{cibc4}
N_g(q)=G^{-1}_*(E_1),~q(N_g,n_\g)=\tfrac{\ell-1}{\sqrt{2\ell-\ell^2}}\ \ \mbox{on }\Si\cap U.
\ee
{ We refer to \S 6.3 for the detailed calculation of the compatibility conditions \eqref{cibc3}-\eqref{cibc4}. }

As before in \eqref{flat}, the flat pair $(g_{0}, F_0)$ in the standard corner neighborhood ${\bf R}$ (with flat Riemannian metric $g_{0R}$ on ${\bf R}_0$) solves the IBVP above with flat initial-boundary data: 
\begin{equation*}
\begin{split}
&I_0=\big(q_{\a\b}=(g_0)_{\a\b},~k_{\a\b}=0,~E_0=L|_{\{t=0\}},~E_1=L_*(N_{g_0})|_{\{t=0\}} \big),\\
&B_{0\cC}=\big(G=L|_{\{x^1=0\}},~[\s]=[(L_0^{-1})^*g_0^\intercal],~H= 0,~\Theta_{\cC} = L_*(T^c_{g_0}+\nu_{g_0})^T\big).
\end{split}
\end{equation*}
Since $(I,B)$ and $(I,B_{\cC})$ differ only by the last two terms in the boundary data, the same localization discussion as following \eqref{flat} holds here, except that in the rescaling process one defines 
\begin{equation*}
\tilde H(\tilde x_0)=\lambda H(\lambda \tilde x_0),\ \ \tilde\Theta_{\cC}^{\a}(\tilde x_0)=\Theta_{\cC}^{\a}(\lambda\tilde x_0)~\a=0,2,3.
\end{equation*}
Making $\l$ sufficiently small, $\tilde H$ is close to zero and $\tilde\Theta_{\cC}$ is close to $ L_*(T_{g_0}^c+\nu_{g_0})^T$, where $g_0$, $L$ are the same (limits) as in the rescaling discussion for the data $(I,B)$, and the projection operator $^T$ is with respect to the flat Riemannian metric $ g_{0R}$. Given these minor modifications, the proof of the validity of the blow-up or rescaling process for the system \eqref{main4}-\eqref{b4} is exactly the same as that for \eqref{main3}-\eqref{b3}.
 
Passing then to the linearization at the flat data $(g_0, F_0)$ as before, the linearization of the last equation in \eqref{b4} at $(g_0,F_0)$ is given by
\begin{equation*}
\begin{split}
F_{0_*}\big((T_g^c)'_h+\nu'_h\big)-g_{0R}\big(F_{0_*}((T_g^c)'_h+\nu'_h),\nu_{g_{0R}}\big)\nu_{g_{0R}}=b,
\end{split}
\end{equation*}
where $b$, similar as in \eqref{linb11}, only involves $\Theta_\cC',f$ in addition to $g_0,F_0,g_{0R}$.
{As before, the conditions on $E_0$ and $G$ imply that the push forward $F_{0_*}:\text{span}\{\partial_2,\partial_3\}\to\text{span}\{\p_{x_0^2},\p_{x_0^3}\}$, and $F_{0_*}(\p_0)\to\text{span}\{\p_{t_0},\p_{x_0^2},\p_{x_0^3}\}$. }Further, \eqref{ccorner} implies that $F_{0_*}(\p_1)\in\text{span}\{\nu_{g_{0R}}\}$. 
Now recall that $(T_g^c)'_h=\frac{1}{2}h_{00}\partial_0-h_{0A}\partial_A$ and $\nu'_h=h_{10}\partial_0-\frac{1}{2}h_{11}\partial_1-h_{1A}\partial_A$, cf.~also \S 6.2. Therefore, the linearization equation above is equivalent to 
\be\label{linTC}
\begin{split}
({\tfrac{1}{2}}h_{00}+h_{10})\partial_0-(h_{0A}+h_{1A})\partial_A=(F_{0_*})^{-1}(b).
\end{split}
\ee
Note that in contrast with the boundary condition \eqref{b1} with data ${\bf B}$ (cf.~Remark~\ref{choice_T}), in the last boundary condition in \eqref{b2} one may replace $T_g^c$ by the unit timelike normal $T_g$ of the time level sets in $M$. This is because by taking the tangential projection as above, one obtains the same linearized term for these different choices of the timelike normal.

The analysis above leads to the following analog of Proposition 2.5. 
\begin{proposition}
Let $g_0 = \eta$ be the standard Minkowski metric on ${\bf R}$ and $F_0$ be a constant linear map $F_0:({\bf R},g_0)\to ({\bf R}_0,g_{0R})$. The  frozen coefficient or blow-up linearization of the equations \eqref{main4}-\eqref{b4} at a flat solution $(g_0, F_0)$ { with respect to the deformation $I'=(q',k',E_0',E_1')~B_\cC'=(G',\s',H',\Theta_\cC')$ of local initial-boundary data}, near a corner point $p \in \Si$, may be written in the form \be \label{linmain2}
\begin{split}
\begin{cases}
\Box_{g_0} h_{\a\b}=0\\
\Box_{g_0} f^\a=0\\
\end{cases}
\quad\text{in }\mathbf R
\end{split}
\ee
with initial and boundary conditions 
\be \label{lini2}
\begin{split}
\begin{cases}
h_{\a\b}=q'_{\a\b},~\partial_{t} h_{\a\b}=u_{\a\b},~V'_h = 0\\
f^\a=(E_0')^\a, \ \ \p_{t}f^\a = e^\a
\end{cases}
\quad\text{on }\{t=0\}
\end{split}
\ee
and
\be \label{linb2}
\begin{split}
\begin{cases}
V'_h=0\\
f^1=0\\
f^\a = (G')^\a,~\a=0,2,3\\
h_{22} - h_{33} = c_2\\
h_{23} = c_3\\
(H_{g_F})'_{(h,f)} = H'\\
\tfrac{1}{2}h_{00} + h_{10} = b_{0}\\
h_{0A} + h_{1A} = b_{A}\ \ A=2,3\\
\end{cases}
\quad\text{on }\{x^1=0\}.
\end{split}
\ee
In the above, $u_{\a\b}$ and $e^\a$ are determined by $I'$; $c_A~(A=2,3)$ depend on $\s'$ and $f$ as given in \eqref{linconformal}; and $b_\a~(\a=0,2,3)$ depend on $\Theta_\cC'$ and $f$ as given in \eqref{linTC}.
\end{proposition}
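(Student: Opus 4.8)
The plan is to assemble the linearized system \eqref{linmain2}-\eqref{linb2} piece by piece, exploiting the fact that the systems \eqref{main4}-\eqref{b4} and \eqref{main3}-\eqref{b3} share identical bulk equations and identical initial conditions \eqref{i4}, \eqref{i3}, so that only the boundary operator requires genuinely new computation. First I would linearize the bulk. The reduced Einstein operator $\Ric_g + \delta^*_g V_g$ linearizes at the flat metric $g_0 = \eta$ to the standard wave operator $\Box_{g_0} h_{\a\b}$ (using $V'_h = \beta_{g_0} h$ together with the principal part of the reduced operator, as recorded before \eqref{Flin}), while the wave-map operator augmented by $F_*(V)$ linearizes to $\Box_{g_0} f^\a$ exactly as in \eqref{Flin}: the contribution of $\Gamma_{g_{0R}}$ and the term $D^2_{g_0}F_0$ both drop out because $g_{0R}$ is flat and $F_0$ is a constant linear map. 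This reproduces \eqref{linmain2}.

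Since the initial data in \eqref{i4} coincide in type and form with those in \eqref{i3}, the linearization of the initial conditions is verbatim that of the previous proposition: the relation $(\tfrac12\cL_{T^0_g}g)'_h = k'$, combined with $(T^0_g)'_h = \tfrac12 q'_{00}\p_0 - q'_{0i}\p_i$, is equivalent to prescribing $h_{\a\b}$ and $\p_t h_{\a\b}$ on $\{t=0\}$; likewise $\big(F_*(N_g)\big)'_{(h,f)} = \p_t f + F_{0_*}\big((N_g)'_h\big)$ prescribes $f^\a$ and $\p_t f^\a$, yielding \eqref{lini2} with $u_{\a\b}, e^\a$ determined by $I'$. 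On the boundary, the conditions $V'_h = 0$, $f^1 = 0$ (from $G$ preserving $\{x^1=0\}$), $f^\a = (G')^\a$ for $\a = 0,2,3$, and the conformal condition producing $h_{22}-h_{33}=c_2$, $h_{23}=c_3$ through \eqref{linconformal} are identical to those already treated, so I would simply quote them.

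The two boundary linearizations that differ are the mean-curvature condition and the projected $T^c_g+\nu_g$ condition. For the latter I would compute $(T^c_g)'_h = \tfrac12 h_{00}\p_0 - h_{0A}\p_A$ and $\nu'_h = h_{10}\p_0 - \tfrac12 h_{11}\p_1 - h_{1A}\p_A$ (cf.~\S 6.2), push forward by $F_{0_*}$, and project onto $T\cC_0$ with respect to $g_{0R}$. The decisive observation, already recorded in \eqref{linTC}, is that \eqref{ccorner} forces $F_{0_*}(\p_1)\in\text{span}\{\nu_{g_{0R}}\}$, so the tangential projection annihilates precisely the $-\tfrac12 h_{11}\p_1$ contribution. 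Consequently the combination $h_{01}+\tfrac12 h_{11}$ present in \eqref{linb11} is absent, only the $\p_0$- and $\p_A$-components survive, and one reads off $\tfrac12 h_{00}+h_{10}=b_0$ and $h_{0A}+h_{1A}=b_A$ from $(F_{0_*})^{-1}(b)$. This projection is also what renders the substitution of $T^c_g$ for $T_g$ harmless here, in contrast to Remark~\ref{choice_T}, since the two choices differ only in the $\p_1$-direction that is projected away. For the mean-curvature condition, $H_{g_F}$ is a smooth functional of $g$ and of the two foliations $F^{-1}(S_\tau)$, $F^{-1}(\Si_\tau)$, so its linearization $(H_{g_F})'_{(h,f)}$ is a well-defined linear functional of $(h,\p h)$ and $(f,\p f)$, and the equation $H_{g_F}=H$ linearizes directly to $(H_{g_F})'_{(h,f)}=H'$; its explicit expansion I would postpone to the energy analysis of \S 3.

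The step I expect to require the most care is the bookkeeping of the boundary operator: one must verify that, after the projection removes one component, exactly six scalar conditions on $h$ remain (the two conformal conditions, the mean-curvature condition, and the three surviving components of the $T^c_g+\nu_g$ relation), matching the six in \eqref{linb11}, with the mean-curvature condition taking the place vacated by $h_{01}+\tfrac12 h_{11}=b_1$. This ensures the resulting IBVP is neither over- nor under-determined. Establishing that this particular boundary operator is the \emph{correct} one, rather than merely recording its form, is really settled downstream by the energy estimates of \S 3; the task in this proposition is only to track the single substitution of one $\Theta$-type condition by the mean-curvature condition correctly through the projection.
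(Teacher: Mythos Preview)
Your proposal is correct and follows essentially the same approach as the paper: you rely on the identical bulk and initial linearizations already established for the $({\bf I},{\bf B})$ system, quote the unchanged boundary conditions, and then isolate the one genuinely new computation---the linearization of $F_*(T^c_g+\nu_g)^T=\Theta_\cC$ using \eqref{ccorner} to kill the $\p_1$-component---exactly as the paper does in deriving \eqref{linTC}. The only additional content in your writeup (the explicit bookkeeping that six scalar conditions on $h$ remain, with $H'$ replacing the dropped $b_1$-condition) is implicit in the paper but a helpful sanity check.
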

The compatibility conditions are essentially the same as those in and following \eqref{comp}.  In the next section we discuss the precise 
definition of $H_{g_F}$ and its linearization.


\section{Analysis of { the Linearized systems}}

In this section, we derive the main $H^1$ and $H^s$ energy estimates for the linearized systems \eqref{linmain11}-\eqref{linb11} and \eqref{linmain2}-\eqref{linb2} at the core of the  well-posedness results to follow, cf.~Theorems \ref{exist1}, \ref{exist2} below. 

We work in the standard corner domain ${\bf R}=\{(t,x^1,x^2,x^3):t\geq 0, x^1\leq 0\}$ and set $\cC=\{x^1=0\}$, $S_\tau= \{t =\tau \mbox{ for a constant }\tau\}$, $\Si_\tau = S_\tau\cap \cC$ and let $\cC_\tau = \cup \{\Si_t: 0 \leq t \leq \tau\}$, $\cT_\tau = \cup \{S_t: 0\leq t \leq \tau\}$. As in \S 2, we assume the initial-boundary data in  \eqref{linmain11}-\eqref{linb11} and \eqref{linmain2}-\eqref{linb2} are of compact support. 

As a simple model for the systems \eqref{linmain11}-\eqref{linb11} and \eqref{linmain2}-\eqref{linb2}, consider a scalar function $u$ on ${\bf R}$ satisfying the inhomogeneous wave equation 
\be \label{wave}
\Box_{g_0}u = \f,
\ee
with initial conditions $u(0, \cdot) = u_0$, $\p_t u(0, \cdot) = u_1$ and boundary condition ${\mathsf B}(u) = b$ on $\cC$, all of compact support. The boundary operator ${\mathsf B}$, specified further below, is assumed to contain derivative operators of order at most $j$, with $j = 0$ or $j=1$. As usual, define the bulk and boundary energies by 
$$\cE_{S_\tau}(u) = \int_{S_\tau}u_t^2 + |du|^2 + u^2 d{\rm vol}_{S_\tau}\ \ {\rm and} \ \ \cE_{\cC_\tau}(u) = \int_{0}^{\tau}\int_{\Si_t}u_t^2 + |du|^2 + u^2 d{\rm vol}_{\Si_t}dt,$$ 
where $du$ is the full collection of spatial derivatives $\p_i u$, $i = 1,2,3$ and the integration is with respect to the volume forms induced on $S_\tau$, $\Si_t$ by the spacetime metric $g_0$. 

For $\O = S_\tau, \Si_\tau$ or $\cC_\tau$, let $H^s(\O)$ denote the Sobolev space of functions with weak derivatives up to order $s$ in $L^2(\O)$, $s \in \bR^+$. For notational convenience, we let $\bar H^s$ denote the analogous norm consisting of all space-time derivatives, (not just those tangent to $\O$). In this notation, 
$$\cE_{S_t}(u) = ||u||_{\bar H^1(S_t)}^2, \ \ {\rm and} \ \ \cE_{\cC_t}(u) = ||u||_{\bar H^1(\cC_t)}^2.$$

As is well-known, the well-posedness of \eqref{wave} and similar more complicated systems of wave equations rests on the existence of the main $H^1$ energy estimate
\be \label{Eu1}
\cE_{S_t}(u) + c\cE_{\cC_t}(u) \leq C[\cE_{S_0}(u) + ||\f||_{L^2(\cT_t)}^2 + ||b||_{H^{1-j}(\cC_t)}^2],
\ee
for constants $c, C > 0$ independent of $u$ and $b$. Similarly, one requires higher order energy estimates of the form 
\be \label{Eus}
||u||_{\bar H^s(S_t)}^2 + c||u||_{\bar H^s(\cC_t)}^2 \leq C[||u||_{\bar H^s(S_0)}^2 + ||\f||_{\bar H^{s-1}(\cT_t)}^2 + 
||b||_{H^{s-j}(\cC_t)}^2].
\ee
These estimates require that $u$ is a smooth, (or at least sufficiently smooth) solution of \eqref{wave}. It will always be assumed $s > \frac{n}{2} + 1 = \frac{5}{2}$, so that by Sobolev embedding $C^{1,\a}\subset H^s$ in dimension 3. It is important to observe that the last term in \eqref{Eu1} or \eqref{Eus} involves only derivatives of $b$ tangent to $\cC$.

For completeness, these energy estimates for solutions $u$ of \eqref{wave} are derived in Appendix \S 6.1 for Sommerfeld (${\mathsf B} (u)= \p_t u+ \p_{x^1} u$) and Dirichlet (${\mathsf B}(u)=u$) boundary conditions, which suffice for our purposes. It is well-known, cf.~\cite{Benzoni-Serre:2007} for example, that the IBVP for \eqref{wave} is well-posed with respect to either of these boundary conditions. 

Returning to the linearized systems \eqref{linmain11}-\eqref{linb11} and \eqref{linmain2}-\eqref{linb2}, throughout the following, we assume the initial data $(q',k')$ for $g'$ are in $H^s(S)\times H^{s-1}(S)$ and the boundary data $(b_{\a}, c_{A})$ are in $H^{s}(\cC)$. Similarly, we assume the initial data $(E_0', E_1')$ for $f$ are in $H^{s+1}(S)\times H^s(S)$ while the boundary data $G'$ for $f$ are in $H^{s+1}(\cC)$. In addition, we assume the $C^{s-1}\times C^s$ compatibility conditions hold for $(I',B')$ at the corner $\Si$. 

\medskip 

We first prove the energy estimates for $f$. 

\begin{proposition}\label{energyF}
Under the assumptions on the initial-boundary data above, one has an $H^{s+1}$ energy estimate for $f$ in \eqref{linmain11}-\eqref{linb11} and \eqref{linmain2}-\eqref{linb2}. Thus 
$$||f||_{\bar H^{s+1}(S_t)}^2 + c||f||_{\bar H^{s+1}(\cC_t)}^2 \leq C[||f||_{\bar H^{s+1}(S_0)}^2 + ||G'||_{H^{s+1}(\cC_t)}^2].$$
\end{proposition}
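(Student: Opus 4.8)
The plan is to exploit that, at the linearized level, the equation for $f$ fully decouples from $h$. Indeed, the term $F_*(V_g)$ was added to the bulk wave-map equation in \eqref{main3} and \eqref{main4} precisely so that its linearization collapses to the homogeneous scalar system $\Box_{g_0} f^\a = 0$, as recorded in the computation \eqref{Flin}; no $h$-dependence survives in the bulk equation for $f$. Likewise, the boundary conditions for $f$ in \eqref{linb11} and \eqref{linb2} are exactly the Dirichlet conditions $f^1 = 0$ and $f^\rho = (G')^\rho$ ($\rho = 0,2,3$), again with no coupling to $h$. Hence each component $f^\a$ is, on its own, a scalar solution of the model problem \eqref{wave} with vanishing source and Dirichlet boundary operator ${\mathsf B}(u) = u$.

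Concretely, I would treat each $f^\a$ ($\a = 0,1,2,3$) as a scalar function on $\mathbf{R}$ solving $\Box_{g_0} f^\a = 0$, with initial data $f^\a = (E_0')^\a$, $\p_t f^\a = e^\a$ on $\{t=0\}$ and Dirichlet boundary data $f^\a = (G')^\a$ on $\cC$ (where $(G')^1 \equiv 0$). I would then apply the scalar Dirichlet energy estimate \eqref{Eus} established in Appendix \S 6.1, at regularity level $s+1$ (replacing $s$ there by $s+1$) and with $j = 0$, $\f = 0$, to obtain for each $\a$
\begin{equation*}
\|f^\a\|_{\bar H^{s+1}(S_t)}^2 + c\|f^\a\|_{\bar H^{s+1}(\cC_t)}^2 \leq C\big[\|f^\a\|_{\bar H^{s+1}(S_0)}^2 + \|(G')^\a\|_{H^{s+1}(\cC_t)}^2\big].
\end{equation*}
Summing over $\a = 0,1,2,3$ and absorbing constants yields the stated estimate.

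The only genuine point of care is that the estimate is asserted at order $s+1$ rather than $s$; this is consistent with the standing assumption that the data for $f$, namely $(E_0',E_1') \in H^{s+1}(S) \times H^s(S)$ and $G' \in H^{s+1}(\cC)$, are one derivative smoother than the data $(q',k')$ for $g'$. To interpret the initial norm $\|f^\a\|_{\bar H^{s+1}(S_0)}$, which involves all spacetime derivatives through order $s+1$, one trades second and higher $t$-derivatives for spatial derivatives using $\Box_{g_0} f^\a = 0$, so that this norm is controlled by $\|(E_0')^\a\|_{H^{s+1}(S_0)} + \|e^\a\|_{H^s(S_0)}$; the $C^s$ corner compatibility conditions \eqref{comp} guarantee that the solution is genuinely $H^{s+1}$ up to the corner $\Si$, which validates the application of \eqref{Eus}. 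There is no substantial analytic obstacle here: the entire content is the decoupling of $f$ from $h$ together with the Dirichlet nature of the boundary conditions, which reduces the claim to the scalar estimate of \S 6.1.
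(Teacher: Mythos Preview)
Your proposal is correct and follows essentially the same approach as the paper: both observe that the $f$-system decouples entirely from $h$ (thanks to the $F_*(V_g)$ term and the pure Dirichlet boundary data), reducing the claim to the standard scalar Dirichlet energy estimate of \S 6.1 applied componentwise at level $s+1$. Your write-up is simply a more detailed version of the paper's terse proof, spelling out the trading of $t$-derivatives and the role of the compatibility conditions that the paper leaves implicit.
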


\begin{proof} As already noted in \S 2, the system for $f$ in \eqref{linmain11}-\eqref{linb11} and \eqref{linmain2}-\eqref{linb2} decouples from the $h$-system. In both systems one has  
\be \label{f}
\begin{split}
&\Box_{g_0}f=0\text{ in }\mathbf R\\
&f=E_0', \ \ \partial_{t}f=e,\text{ on }\{t=0\}\\
&f=G' \text{ on }\{x^1_0=0\}.
\end{split}
\ee
The system \eqref{f} is an uncoupled system of wave equations for $f$ with (inhomogeneous) Dirichlet boundary conditions. It is well-known that such systems admit $H^1$ energy estimates as in \eqref{Eu1} and higher order energy estimates \eqref{Eus} given the $C^s$ compatibility conditions; cf.~again \cite{Benzoni-Serre:2007} or \S 6.1. 

\end{proof}

Next we turn to energy estimates for the blow-up linearization \eqref{linmain11}-\eqref{linb11} of the system \eqref{main3}-\eqref{b3}. { Notice that with the energy of $f$ well controlled, the $H^s$ norms of terms $b_\a$ and $c_A$ in these linear systems are bounded by the data in $B'$ (or $B_\cC'$).}

\begin{proposition}\label{energy1}
For the linear system \eqref{linmain11}-\eqref{linb11} one has an $H^s$ energy estimate
$$||h||_{\bar H^s(S_t)}^2 + c||h||_{\bar H^s(\cC_t)}^2 \leq C[||h||_{\bar H^s(S_0)}^2 + ||\underline{b}||_{H^s(\cC_t)}^2 + ||\underline{c}||_{H^s(\cC_t)}^2],$$
where $\underline{b} = \{b_{\a}\}$, $\a = 0,1,2,3$, $\underline{c} = \{c_A\}$, $A = 2,3$. 

\end{proposition}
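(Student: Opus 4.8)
The plan is to run the component-wise energy method and reduce the whole estimate to the control of a single boundary flux, then show that the conditions \eqref{linb11} render that flux dissipative modulo the data. Since $\Box_{g_0}h_{\a\b}=0$ is a diagonal system of scalar wave equations in the interior, I treat each component as a scalar and sum the scalar energy identities of \S 6.1. For the positive-definite component energy $\cE_{S_\tau}(h)=\sum_{\a,\b}\cE_{S_\tau}(h_{\a\b})$ this gives
\[
\cE_{S_\tau}(h)-\cE_{S_0}(h)=2\int_{\cC_\tau}\cQ,\qquad \cQ=\sum_{\a,\b}\p_0 h_{\a\b}\,\p_1 h_{\a\b},
\]
the sum running over all ordered index pairs (so off-diagonal components appear twice). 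Thus the entire estimate rests on bounding $\int_{\cC_\tau}\cQ$ by the data $\underline b,\underline c$ plus terms absorbable into the left-hand side. By Proposition \ref{energyF} the quantities $b_\a$ and $c_A$ are already controlled in $H^s(\cC)$ by the boundary data, so they may be treated as genuine inhomogeneous terms.

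Next I organize the boundary conditions. The six conditions in \eqref{linb11} other than $V'_h=0$ are of Dirichlet (order-zero) type, prescribing the tangential behaviour of $h_{22}-h_{33}$, $h_{23}$, $\tfrac12 h_{00}+h_{10}$, $h_{01}+\tfrac12 h_{11}$ and $h_{0A}+h_{1A}$; differentiating them tangentially converts each $\p_0$ (and $\p_A$) of a controlled combination into $\p_0$ of known data. The four gauge conditions $V'_h=\beta_{g_0}h=0$ are of order one; writing $(\beta_{g_0}h)_\a=\p_0 h_{0\a}-\p_1 h_{1\a}-\p_A h_{A\a}+\tfrac12\p_\a\tr h$ and isolating the single normal derivative in each, one solves for $\p_1 h_{01}$, $\p_1 h_{12}$, $\p_1 h_{13}$ and for $\p_1(h_{00}+h_{11}-h_{22}-h_{33})$ in terms of tangential derivatives alone. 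Substituting these and the Dirichlet relations into $\cQ$ reorganizes the flux block by block (scalar $\{h_{00},h_{01},h_{11}\}$, vector $\{h_{0A},h_{1A}\}$, and $\Si$-tensor $\{h_{22}-h_{33},h_{23}\}$ together with its trace). In each block the complementary mode $w$ acquires a dissipative relation of the form $\p_1 w=-a\,\p_0 w+(\text{data}+\text{tangential})$ with $a>0$, so that its contribution $a\,\p_0 w\,\p_1 w$ carries a favourable sign in the flux identity, by the same computation that underlies the Sommerfeld estimate of \S 6.1.

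The main obstacle is the remaining cross terms in $\cQ$, each a product of a $\p_0$-derivative of one component with a tangential ($\p_0,\p_2,\p_3$) derivative of another; pointwise these are bounded only by $\cE_{\cC}(h)$, so I must show they cannot overwhelm the favourable dissipative terms. I would dispatch them in two steps: terms that assemble into a total tangential divergence $\p_0(\cdots)+\p_A(\cdots)$ integrate over $\cC_\tau$ to contributions on $\Si_0$ and $\Si_\tau$ controlled by $\cE_{S_0}$ and a small multiple of $\cE_{S_\tau}$; the genuinely quadratic remainder is absorbed using the negative-definite dissipative terms together with a small-constant Cauchy–Schwarz and a Gronwall argument. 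Verifying that the specific coefficients produced by \eqref{linb11} and $\beta_{g_0}h=0$ actually yield a boundary quadratic form of the right (dissipative) sign — i.e. that these boundary conditions meet the requisite Kreiss-type condition — is the crux of the argument and the step that demands the most careful bookkeeping.

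Finally, the $H^1$ estimate so obtained is promoted to the stated $H^s$ bound in the usual way. Because $\{x^1=0\}$ is non-characteristic and every condition in \eqref{linb11} commutes with the tangential derivatives $\p_0,\p_2,\p_3$, I differentiate the system \eqref{linmain11}-\eqref{linb11} up to $s$ times tangentially, apply the $H^1$ estimate to each differentiated system (the assumed $C^{s-1}\times C^{s}$ corner compatibility ensuring the differentiated data are admissible), and recover the missing normal derivatives algebraically from the interior equation, using $\p_1^2 h_{\a\b}=\p_0^2 h_{\a\b}-\p_2^2 h_{\a\b}-\p_3^2 h_{\a\b}$. Summing over the derivative multi-indices and absorbing lower-order terms by Gronwall yields the asserted estimate with $\cE_{\cC}(h)$ retained on the left, in the form \eqref{Eus}.
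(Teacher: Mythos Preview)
Your scheme is a reasonable framework, but the step you flag as ``the crux'' is precisely where the content of the proposition lives, and you do not carry it out. In fact the block-diagonal picture you sketch does not survive the gauge equations: the trace $\tau_h=\tfrac12(h_{22}+h_{33})$ appears in $(V'_h)_0$ and $(V'_h)_1$, so it couples to the ``scalar block'' $\{h_{00},h_{01},h_{11}\}$ and cannot be treated separately with the $\Sigma$-tensor components. When you actually substitute the Dirichlet and gauge relations into the boundary flux you do not get, for each complementary mode $w$, a clean relation $\p_1 w=-a\,\p_0 w+(\text{data})$ with $a>0$; rather you get a coupled $2\times2$ system in $(h_{00},\tau_h)$ for which dissipativity has to be produced by a nontrivial linear combination (in the paper this combination involves the constant $2+\sqrt{3}$). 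So the verification you defer is not mere bookkeeping; it requires finding those combinations, and without it the sign claim is unsupported.

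The paper avoids analysing the total flux $\cQ$ altogether. Its argument is sequential rather than simultaneous: first the five Dirichlet combinations $h_{22}-h_{33}$, $h_{23}$, $h_{00}-h_{11}$, $\tfrac12 h_{00}+h_{01}$, $h_{0A}+h_{1A}$ are estimated directly by the Dirichlet energy estimate of \S6.1; then the Dirichlet-to-Neumann bound (control of $\p_1$ of a Dirichlet-controlled quantity) is used to turn the gauge equations into genuine Sommerfeld conditions on further combinations. Concretely, $(V'_h)_A=0$ yields a Sommerfeld condition on $h_{0A}-h_{1A}$ modulo already-controlled terms, and a suitable combination of $(V'_h)_0=0$ and $(V'_h)_1=0$ yields a Sommerfeld condition first on $-\tfrac{\sqrt3}{2}h_{00}+\tau_h$ and then on $h_{00}$. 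Each step feeds the next; no global dissipativity of a boundary quadratic form is ever invoked. If you want to complete your direct-flux approach you will have to replicate this algebra inside $\cQ$, and at that point you are effectively reproducing the paper's computation in a less transparent coordinate system.
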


\begin{proof}

We begin by analyzing the gauge boundary conditions $V'_h = 0$ on the boundary $\cC = \{x^1 = 0\}$, which have the form: 
\be \label{gaugelin}
\begin{split} 
&(V'_h)_0 = -\partial_0h_{00}+\partial_1h_{01}+\partial_Ah_{0A}-{\tfrac{1}{2}}\partial_0(trh) = 0\\
&(V'_h)_1 = -\partial_0h_{01}+\partial_1h_{11}+\partial_Ah_{1A}-{\tfrac{1}{2}}\partial_1(trh) = 0\\
&(V'_h)_A = -\partial_0h_{0A}+\partial_1h_{1A}+\partial_Bh_{BA}-{\tfrac{1}{2}}\partial_A(trh) = 0.
\end{split}
\ee
We recall that $A,B= 2,3$ and the Einstein summation convention is used. 
Let $\tau_h= \frac{1}{2}(h_{22}+h_{33})$. Since $\tr h = -h_{00}+h_{11}+2\tau_h$, this gives 
\be \label{eqn0}
\begin{split}
&-{\tfrac{1}{2}}\partial_0(h_{00}+h_{11})+\partial_1h_{01}-\partial_0\tau_h+\partial_Ah_{0A} = 0\\
&\ \ \ {\tfrac{1}{2}}\partial_1(h_{00}+h_{11})-\partial_0h_{01}-\partial_1\tau_h+\partial_Ah_{1A} = 0\\
&-\partial_0h_{0A}+\partial_1h_{1A}+{\tfrac{1}{2}}\partial_A(h_{00}-h_{11} - 2\tau_h) + \p_B h_{AB} = 0.
\end{split}
\ee
Simple modification of these equations gives
\be \label{eqn0000}
\begin{split}
&-(\partial_0+\tfrac{1}{2}\partial_1)h_{00}-\partial_0\tau_h-{\tfrac{1}{2}}\partial_0(h_{11}-h_{00})+\partial_1(h_{01}+\tfrac{1}{2}h_{00})+\partial_Ah_{0A} = 0\\
&\ \ \ (\tfrac{1}{2}\partial_0+\partial_1)h_{00}-\partial_1\tau_h+{\tfrac{1}{2}}\partial_1(h_{11}-h_{00})-\partial_0(h_{01}+\tfrac{1}{2}h_{00})+\partial_Ah_{1A} = 0\\
&-\partial_0h_{0A}+\partial_1h_{1A}+{\tfrac{1}{2}}\partial_A(h_{00}-h_{11} - 2\tau_h) + \p_B h_{AB} = 0.
\end{split}
\ee
which further leads easily to the following system: 
\begin{align}
\label{eqn1}
&-(\partial_0+\tfrac{1}{2}\partial_1)h_{00}-\partial_0\tau_h-{\tfrac{1}{2}}\partial_0(h_{11}-h_{00})+\partial_1(h_{01}+\tfrac{1}{2}h_{00})+\partial_Ah_{0A} = 0\\
\label{eqn2}
&\ \ \ (\tfrac{1}{2}\partial_0+\partial_1)h_{00}-\partial_1\tau_h+{\tfrac{1}{2}}\partial_1(h_{11}-h_{00})-\partial_0(h_{01}+\tfrac{1}{2}h_{00})+\partial_Ah_{1A} = 0\\
\label{eqn3}
&-(\partial_0+\partial_1)(h_{0A}-h_{1A})-(\partial_0-\partial_1)(h_{0A}+h_{1A})+\partial_A(h_{00}-h_{11}) -2\p_A \tau_h + 2\p_B h_{AB} = 0.
\end{align}
In light of \eqref{eqn1}-\eqref{eqn3} and the well-known existence of energy estimates for Sommerfeld and Dirichlet boundary conditions discussed in \S 6.1, it is then natural to impose Dirichlet boundary condition on the terms $h_{11}-h_{00}$, $\tfrac{1}{2}h_{00}+h_{01}$ and $h_{0A}+h_{1A}$. Note that these terms are all included in the Dirichlet boundary conditions in \eqref{linb11}. Namely, the full set of boundary conditions in \eqref{linb11} are 
\begin{align}
\label{conf1}
&h_{22}-h_{33}=c_2\\
\label{conf2}
&h_{23}=c_3\\
\label{eqn4}
&{\tfrac{1}{2}}h_{00}+h_{01} = b_0\\
\label{eqn5}
&{\tfrac{1}{2}}h_{11}+h_{01} = b_1\\
\label{eqn6}
&h_{0A}+h_{1A} = b_A.
\end{align}
Observe that \eqref{eqn4}$-$\eqref{eqn5} gives $h_{00}-h_{11} =2( b_0 - b_1)$. All of these linearly combined components of $h$ i.e.~$h_{22}-h_{33}$, $h_{23}$, $h_{00}-h_{11}$, $\tfrac{1}{2}h_{00}+h_{01}$ and $h_{0A}+h_{1A}$, satisfy the wave equation \eqref{wave} with Dirichlet boundary conditions. Hence the $H^s$ energy estimate \eqref{Eus} holds for them.

Next in \eqref{eqn3}, fix the index $A$, say $A=2$. Then the last two terms are $-2\p_A \tau_h + 2\p_B h_{AB}=-2\p_2 \tau_h + 2\p_2 h_{22}+2\p_3 h_{23}=-\p_2 (h_{22}+h_{33}) + 2\p_2 h_{22}+2\p_3 h_{23}=\p_2(h_{22}-h_{33})+2\p_3h_{23}$.  Thus by \eqref{conf1}-\eqref{conf2} this is controlled in $H^{s-1}$. Based on \eqref{eqn4}-\eqref{eqn5}, one also has $H^{s-1}$ control of the term $\partial_A(h_{00}-h_{11})$ in \eqref{eqn3}. {Further, as discussed in the Appendix \S 6.1, control of the Dirichlet boundary value gives control of the Neumann (normal derivative) boundary value; this is the boundedness of the Dirichlet-to-Neumann map. Thus \eqref{eqn6} gives $H^{s-1}$ control of the second term $-(\partial_0-\partial_1)(h_{0A}+h_{1A})$ in \eqref{eqn3}.} It then follows that $(\p_0+\p_1)(h_{02}-h_{12})$ is controlled in $H^{s-1}$ on $\cC$. This is a Sommerfeld boundary operator and since $(h_{02}-h_{12})$ is a solution of the wave equation, $\Box_{g_{0}}(h_{02}-h_{12}) = 0$ (with $\bar H^s$ initial conditions), this gives $H^s$ control on $h_{02} - h_{12}$. In addition we already have $H^s$ energy control of $h_{02}+h_{12}$. Thus we obtain $H^s$ energy estimates for both $h_{02}$ and $h_{12}$. The same argument also applies to the case $A=3$, and we can obtain $H^s$ energy estimates for $h_{03}$ and $h_{13}$.

Now according to \eqref{eqn1}, \eqref{eqn2}, the $H^s$ Dirichlet control on $h_{0A}, h_{1A}$, $\tfrac{1}{2}h_{00}+h_{01}$ and $h_{00}-h_{11}$ at the boundary $\cC$ gives $H^{s-1}$ control of $u=(\partial_0+\tfrac{1}{2}\partial_1)h_{00}+\partial_0\tau_h$ and $v=(\tfrac{1}{2}\partial_0+\partial_1)h_{00}-\partial_1\tau_h$. Consider the combination $u-(2+\sqrt 3)v$, which is then also controlled in $H^{s-1}$. Simple computation gives
$$u-(2+\sqrt 3)v=(\p_0+(2+\sqrt 3)\p_1)(-\tfrac{\sqrt 3}{2}h_{00}+\tau_h),$$
which is a Sommerfeld boundary operator on $w=-\tfrac{\sqrt 3}{2}h_{00}+\tau_h$. Thus the $H^s$ energy of $w$ is controlled since $w$ satisfies the wave equation $\Box_{g_0}w=0$. Furthermore $u$ can be rewritten as
$u=(\partial_0+\tfrac{1}{2}\partial_1)h_{00}+\partial_0(\tfrac{\sqrt 3}{2}h_{00}-\tfrac{\sqrt 3}{2}h_{00}+\tau_h)=[(1+\tfrac{\sqrt 3}{2})\p_0+\tfrac{1}{2}\partial_1]h_{00}+\p_0w$ which thus gives a bound of the Sommerfeld operator $(1+\tfrac{\sqrt 3}{2})\p_0+\tfrac{1}{2}\partial_1$ acting on $h_{00}$. Thus we obtain $H^s$ energy estimates for $h_{00}$, which further yields $H^s$ energy estimates for $\tau_h$ via \eqref{eqn1}. Combined with \eqref{conf1}, \eqref{eqn4}, \eqref{eqn5}, this also gives energy estimates for $h_{22},h_{33}$, $h_{11}$ and $h_{01}$.\footnote{{The proof of Proposition \ref{energy1}, as well as that of Proposition \ref{energy2} below, hold equally well for blow-up linearization of the vacuum Einstein equations with a cosmological constant $\Lambda$, where \eqref{linmain11} is replaced by $\Box_{g_0} h_{\a\b} = c g_0$ for some non-zero constant $c$. }  }

\end{proof}

The method of proof of Proposition \ref{energy1} shows that the harmonic gauge condition $V_g= 0$ on $\cC$ determines a natural choice of Dirichlet-type boundary data \eqref{eqn4}-\eqref{eqn6}, at least given the choice of boundary condition on the conformal class $[g_F^\intercal]$. The method of proof also has an upper-triangular character, similar to the upper-triangular form or bootstrap method introduced and employed in \cite{KRSW:2009}, \cite{KRSW:2007}, cf.~also \cite{Kreiss-Winicour:2014}.

\medskip 

Next we consider boundary conditions more intrinsic to the boundary $\cC$. We will keep the boundary conditions \eqref{eqn4},\eqref{eqn6}, but drop the condition \eqref{eqn5}; instead we seek a replacement for \eqref{eqn5} with a quantity more intrinsic or geometric to the boundary $\cC$. We first present below a general discussion of this situation. The result of this analysis is then summarized in Proposition \ref{energy2} below. 

In the following, we denote by $O_j$ a boundary term which has been controlled in $H^{s-j}(\cC)$ by preceding arguments and let $O = O_0$. Thus from \eqref{conf1} or \eqref{linb2}, we have $h_{22}=h_{33}=\tau_h+O$, while from \eqref{linb2}, $h_{01}=-\frac{1}{2}h_{00}+O$ and $h_{1A}=-h_{0A}+O$. Applying these replacements in \eqref{gaugelin}, we obtain 
\begin{align}
\label{eqn7}
&-{\tfrac{1}{2}}\partial_0h_{11}-{\tfrac{1}{2}}(\partial_0+\partial_1)h_{00}-\partial_0\tau_h+X=O\\
\label{eqn8}
&{\tfrac{1}{2}}\partial_1h_{11}+{\tfrac{1}{2}}(\partial_0+\partial_1)h_{00}-\partial_1\tau_h-X=O\\
\label{eqn9}
&-(\partial_0+\partial_1)h_{0A}+{\tfrac{1}{2}}\partial_A(h_{00}-h_{11})=O,
\end{align}
where $X=\partial_Ah_{0A}$.

We first seek an equation involving only the terms $h_{00}$ and $h_{11}$. To do this, we use Hamiltonian constraint \eqref{Ham1}, \eqref{Ham2} on both the timelike boundary $\cC=\{x^1=0\}$ as well as the spacelike hypersurfaces $S_t$. The linearization of these equations is given \S 6.2. From \eqref{Hamlin1}, on $\cC$ one has 
\be \label{hamlin1}
\begin{split}
(\partial_0\partial_0-\partial_1\partial_1)h_{00}+(\partial_0\partial_0+\partial_1\partial_1)\tau_h-2\partial_0X=O_2,
\end{split}
\ee
while on the hypersurfaces $S_t$, from \eqref{Hamlin2} one has 
\be \label{hamlin2}
\begin{split}
(\partial_0\partial_0-\partial_1\partial_1)h_{11}+(\partial_1\partial_1+\partial_0\partial_0)\tau_h+2\partial_1X=O_2.
\end{split}
\ee
Taking the difference \eqref{hamlin1}$-$\eqref{hamlin2} gives  
\be \label{diff1}
(\partial_0\partial_0-\partial_1\partial_1)(h_{00}-h_{11})-2(\partial_0+\partial_1)X=O_2,
\ee
which is equivalent to 
\be \label{diff2}
(\partial_0+\partial_1)[(\partial_0-\partial_1)(h_{00}-h_{11})-2X]=O_2. 
\ee
This is a Sommerfeld boundary condition on $[(\partial_0-\partial_1)(h_{00}-h_{11})-2X]$ and thus as in the proof of Proposition \ref{energy1}, we can obtain
\begin{equation}
\begin{split}
(\partial_0-\partial_1)(h_{00}-h_{11})-2X=O_1.
\end{split}
\end{equation}
Now taking $\partial_1\eqref{eqn7}-\partial_0\eqref{eqn8}$ yields
\be \label{diff3}
\begin{split}
-\partial_0\partial_1h_{11}-{\tfrac{1}{2}}(\partial_0+\partial_1)^2h_{00}+(\partial_1+\partial_0)X=O_2.
\end{split}
\end{equation}
Taking then $2\times$\eqref{diff3}+\eqref{diff1} gives 
\begin{equation*}
\begin{split}
-2\partial_0\partial_1h_{11}-(\partial_0+\partial_1)^2h_{00}+(\partial_0\partial_0-\partial_1\partial_1)(h_{00}-h_{11})=O_2.
\end{split}
\end{equation*}
which can be simplified as
\be \label{double}
\begin{split}
(-\partial_0\partial_0+\partial_1\partial_1-2\partial_0\partial_1)h_{11}-2\partial_1(\partial_0+\partial_1)h_{00}=O_2.
\end{split}
\end{equation}
Factorization gives $(-\partial_0\partial_0+\partial_1\partial_1-2\partial_0\partial_1)h_{11}=(\partial_1+(\sqrt 2-1)\partial_0)(\partial_1-(\sqrt 2+1)\partial_0)h_{11}$. The first factor, again of Sommerfeld type, leads to suitable energy estimates; the second factor however does not. Thus we seek a remaining boundary condition of the form 
\be \label{h1}
(\partial_0+\partial_1)h_{00}-\a\partial_0h_{11}-\b\partial_1h_{11}=O_1
\ee
for some real numbers $\a,\b$.
Based on the Dirichlet-to-Neumann estimate as discussed in the proof of Proposition \ref{energy1}, taking $\p_1$ of \eqref{h1} yields 
\be
\partial_1(\partial_0+\partial_1)h_{00}-\a\partial_0\partial_1h_{11}-\b\partial_1\partial_1h_{11}=O_2,
\ee
and adding this to \eqref{double} gives $(-\partial_0\partial_0+\partial_1\partial_1-2\partial_0\partial_1)h_{11}-2\a\partial_0\partial_1h_{11}-
2\b\partial_1\partial_1h_{11}=O_2$, i.e.
\be \label{double2}
[\partial_0\partial_0+2(\a+1)\partial_0\partial_1+(2\b-1)\partial_1\partial_1]h_{11}=O_2.
\end{equation}
For this to be a well-posed (Sommerfeld-type) boundary condition, one must have 
\be \label{AB}
\begin{split}
~\a+1\geq 0,~2\b-1\geq 0,~(\a+1)^2\geq 2\b-1.
\end{split}
\end{equation}
If the inequalities above are satisfied, \eqref{double2} can be taken as a``double Sommerfeld" type boundary condition on $h_{11}$. 

It remains to check what mean curvature quantity $H_{g_F}$ can lead to \eqref{h1}. Note that since we already have energy estimate for $f$ by Proposition \ref{energyF}, in the following we only consider variation of various mean curvature terms with respect to the deformation $h$. 
Let $K_{g|S_t}$ be the second fundamental form of the hypersurface $S_t$ in the ambient manifold $({\bf R},g)$, and $K_{g|\cC}$ be the second fundamental form of the timelike boundary $\{x^1=0\}\subset({\mathbf R},g)$. Let $\tr_{S_t}K_{g|S_t}$ denote the full trace of $K_{g|S_t}$ on the $t$-hypersurface $S_t$ and $\tr_{\Si_t}K_{g|S_t}$ the restricted trace of $K_{g|S_t}$ on $\Si_t$. Similarly, let $\tr_{\mathcal C}K_{g|\cC}$ be the full trace of $K_{g|\cC}$ on the timelike boundary, with $\tr_{\Si_t}K_{g|\cC}$ the restricted trace. All these trace operators are with respect to the metrics on $S_t$ and $\Si_t$ that are induced from $g$. The linearization of these terms is given by (cf. \S 6.2):
\begin{align}
\label{trK}
&2(\tr_{S_t}K_{g|S_t})'_h=\partial_0(h_{11}+h_{AA})-2\partial_1h_{01}-2\partial_Ah_{0A}=\partial_0h_{11}+\partial_1h_{00}+2\partial_0\tau_h-2X + O_1\\
\label{trtK}
&2(\tr_{\Si_t}K_{g|S_t})'_h=\partial_0h_{AA}-2\partial_Ah_{0A}=2(\partial_0\tau_h-X) + O_1\\
\label{trA}
&2(\tr_{\mathcal C}K_{g|\cC})'_h=\partial_1(-h_{00}+h_{AA})+2\partial_0h_{10}-2\partial_Ah_{1A}=-(\partial_1+\partial_0)h_{00}+2\partial_1\tau_h+2X + O_1\\
\label{trtA}
&2(\tr_{\Si_t}K_{g|\cC})'_h=\partial_1h_{AA}-2\partial_Ah_{1A}=2(\partial_1\tau_h+X) + O_1.
\end{align}
Here in \eqref{trK} and \eqref{trA} we have used the control on $\frac{1}{2}h_{00} + h_{01}$ as well as the control given by the Dirichlet-to-Neumann map, as in the proof of Proposition \ref{energy1} above. Substituting the relations \eqref{eqn7}-\eqref{eqn9} into these equations one easily obtains
\be \label{hm}
\begin{split}
&\p_0 h_{00} = -2(\tr_{S_t}K_{g|S_t})'_h,\\
&\p_1 h_{00} = 2(\tr_{\Si_t}K_{g|\cC})'_h - 2(\tr_{\cC}K_{g|\cC})'_h + 2(\tr_{S_t}K_{g|S_t})'_h,\\
&\p_0 h_{11} = - 2(\tr_{\Si_t}K_{g|S_t})'_h -2(\tr_{\Si_t}K_{g|\cC})'_h + 2(\tr_{\cC}K_{g|\cC})'_h,\\
&\p_1 h_{11} = 2(\tr_{\cC}K_{g|\cC})'_h.\\
\end{split}
\ee
Substituting these into \eqref{h1} transforms \eqref{h1}, after simple manipulations, into  
$$\a (\tr_{\Si_t}K_{g|S_t})'_h - (\a+\b+1)(\tr_{\cC}K_{g|\cC})'_h + (\a+1)(\tr_{\Si_t}K_{g|\cC})'_h = O_1.$$
Thus in the nonlinear system \eqref{main4}-\eqref{b4} we can set 
\be \label{h2n}
H_{g_F} = \a \tr_{\Si_\tau}K_{g_F|S_\tau} - (\a+\b+1)\tr_{\cC}K_{g_F|\cC} + (\a+1)\tr_{\Si_\tau}K_{g_F|\cC},
\ee
and require that $\a,\b$ satisfy \eqref{AB}. 

This leads to the following analog of Proposition \ref{energy1}.  

\begin{proposition}\label{energy2}
For the gauged system \eqref{main4}-\eqref{b4} where $H_{g_F}$ is given by \eqref{h2n} with constants $\a,\b$ satisfying \eqref{AB}, its blow-up linearization \eqref{linmain2}-\eqref{linb2} admits an $H^s$ energy estimate 
$$|||h||_{\bar H^s(B_t)}^2 + c||h||_{\bar H^s(\cC_t)}^2 \leq C[||h||_{\bar H^s(B_0)}^2 +||H'||_{H^{s-1}(\cC_t)}^2+ ||\underline{b}||_{H^s(\cC_t)}^2 + 
||\underline{c}||_{H^s(\cC_t)}^2],$$
where $\underline{b} = \{b_{\a}\}$, $\a = 0,2,3$, $\underline{c} = \{c_A\}$, $A = 2,3$.  
\end{proposition}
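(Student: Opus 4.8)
The plan is to run the same upper-triangular bootstrap used in the proof of Proposition \ref{energy1}, retaining the boundary conditions \eqref{eqn4} and \eqref{eqn6} together with the conformal conditions \eqref{conf1}--\eqref{conf2}, but replacing the control of $h_{11}$ that came there from \eqref{eqn5} by the mean-curvature condition $(H_{g_F})'_{(h,f)}=H'$. First I would invoke Proposition \ref{energyF}: since the $f$-system decouples and is bounded in $\bar H^{s+1}$ by its own data, every occurrence of $f$ inside the boundary terms $b_\a$, $c_A$ and inside $H'$ is absorbed, so that $\underline b=\{b_\a\}$, $\underline c=\{c_A\}$ and $H'$ may be treated as genuine data bounded in $H^s$ (respectively $H^{s-1}$). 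Exactly as in Proposition \ref{energy1}, the conformal conditions give $H^s$ Dirichlet control of $h_{22}-h_{33}$ and $h_{23}$, condition \eqref{eqn4} gives Dirichlet control of $\tfrac12 h_{00}+h_{01}$ (hence Neumann control via the Dirichlet-to-Neumann bound of \S 6.1), and \eqref{eqn6} gives Dirichlet control of $h_{0A}+h_{1A}$.

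The crux is to recover $h_{11}$ and $h_{00}$ individually, which in Proposition \ref{energy1} came from the pair \eqref{eqn4}--\eqref{eqn5}. Here I would assemble the chain already carried out in the general discussion preceding the statement: substituting $h_{01}=-\tfrac12 h_{00}+O$ and $h_{1A}=-h_{0A}+O$ into the gauge system \eqref{gaugelin} produces \eqref{eqn7}--\eqref{eqn9}; the linearized Hamiltonian constraints on $\cC$ and on $S_t$ give \eqref{hamlin1}--\eqref{hamlin2}, whose difference \eqref{diff1} yields, after rewriting as the Sommerfeld condition \eqref{diff2}, the bound $(\partial_0-\partial_1)(h_{00}-h_{11})-2X=O_1$; and combining \eqref{diff1} with the relation \eqref{diff3} obtained by applying $\partial_1$ to \eqref{eqn7}, $\partial_0$ to \eqref{eqn8} and subtracting produces \eqref{double}. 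A point worth highlighting is that in forming $2\times\eqref{diff3}+\eqref{diff1}$ the terms in $X=\partial_A h_{0A}$ cancel, so that \eqref{double} couples only $h_{00}$ and $h_{11}$; this is what allows $h_{11}$ to be solved for before the off-diagonal components. Feeding in the linearization \eqref{h1} of the mean-curvature boundary operator \eqref{h2n}, differentiating it by $\partial_1$ (using the Dirichlet-to-Neumann bound) and adding to \eqref{double} eliminates $h_{00}$ and leaves the second-order ``double Sommerfeld'' operator \eqref{double2} acting on $h_{11}$.

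The main obstacle, and the only genuinely new analytic content beyond Proposition \ref{energy1}, is establishing the $H^s$ estimate for $\Box_{g_0}h_{11}=0$ subject to the second-order boundary operator in \eqref{double2}. I would verify that, under the inequalities \eqref{AB}, this operator factors as a composition $(\partial_0-\mu_1\partial_1)(\partial_0-\mu_2\partial_1)$ of two outgoing Sommerfeld operators with real $\mu_1,\mu_2\leq 0$: the reality of the roots is precisely the discriminant condition $(\a+1)^2\geq 2\b-1$, while the dissipative signs $\mu_i\leq 0$ follow from $\a+1\geq 0$ together with $2\b-1\geq 0$, so that all three inequalities in \eqref{AB} are used. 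Applying the single-Sommerfeld estimate of \S 6.1 twice — first to $v=(\partial_0-\mu_2\partial_1)h_{11}$, which solves the wave equation and satisfies $(\partial_0-\mu_1\partial_1)v=O_2$ on $\cC$, and then to $h_{11}$ itself — then gives $H^s$ control of $h_{11}$. I expect the care here to lie in tracking the Sobolev orders across the two applications and in confirming strict dissipativity, so that the boundary energy is retained with a positive constant $c$ at the degenerate endpoint of \eqref{AB}.

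Once $h_{11}$ is controlled the bootstrap closes as in Proposition \ref{energy1}. The relation \eqref{h1} then becomes a Sommerfeld condition on $h_{00}$, giving $H^s$ control of $h_{00}$; consequently $h_{00}-h_{11}$ is controlled, so \eqref{eqn9} becomes a Sommerfeld condition on $h_{0A}$, which, combined with the Dirichlet control of $h_{0A}+h_{1A}$ from \eqref{eqn6}, yields $h_{0A}$ and $h_{1A}$ (and hence $X$); the gauge equations \eqref{eqn7}--\eqref{eqn8} then recover $\tau_h$, and with \eqref{conf1} the components $h_{22},h_{33}$, while \eqref{eqn4} gives $h_{01}$. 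Summing the component estimates and tracking the data dependence produces the stated inequality, with the right-hand side controlled by the initial energy, $\|H'\|_{H^{s-1}(\cC_t)}$, $\|\underline b\|_{H^s(\cC_t)}$ and $\|\underline c\|_{H^s(\cC_t)}$.
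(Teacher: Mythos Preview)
Your proposal is correct and follows essentially the same approach as the paper, which simply says ``the proof is the same as that of Proposition \ref{energy1}'' and then lists the bootstrap order $h_{11}\to h_{00}\to h_{01}\to h_{0A}\to \tau_h\to$ remaining components. In fact you give more detail than the paper does on the key new ingredient, namely the factorization of the double-Sommerfeld operator \eqref{double2} under the sign conditions \eqref{AB}; the paper leaves this implicit, merely asserting that \eqref{double2} yields the $H^s$ estimate for $h_{11}$.
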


\begin{proof} The proof is the same as that of Proposition \ref{energy1}. Namely, if $\a,\b$ satisfy \eqref{AB}, then by \eqref{double2} one obtains an $H^s$ energy estimate for $h_{11}$. Via \eqref{h1} and the Dirichlet-to-Neumann estimate, this gives an $H^s$ energy estimate for $h_{00}$. Since the Dirichlet condition on $\frac{1}{2}h_{00} + h_{01}$ gives an $H^s$ energy estimate for this term, one has the $H^s$ energy estimate for $h_{01}$. Now equation \eqref{eqn9} yields $H^s$ control of $h_{0A}$ for $A=2,3$ and equation \eqref{eqn7} yields $H^s$ control of $\tau_h$. The Dirichlet boundary conditions give $H^s$ control on the remaining components of $h = h_{\a\b}$. 
\end{proof}

There are many other expressions for $H_{g_F}$ besides \eqref{h2n} for which Proposition \ref{energy2} remains valid; this arises from the fact that there are numerous other variants of the algebraic manipulations in \eqref{double}-\eqref{h1}. Similarly, other expressions for $H_{g_F}$ preserving the validity of Proposition \ref{energy2} may be obtained by changing the boundary condition $\frac{1}{2}h_{00} + h_{01}$ to $\l h_{00} + \mu h_{01}$, for arbitrary smooth $\l, \mu > 0$. We will not pursue this further in general here however. 

It is worth noting that when $\a=0$ and $\b=1$ in \eqref{h2n}, $H_{g_F}$ is the mean curvature boundary condition $L$ in \cite{Kreiss-Winicour:2014}. However, the method of proof of \cite{Kreiss-Winicour:2014}, relying on estimates with pure Neumann boundary data, is rather different than the proof above. 

Finally, it would be interesting to know if (with a suitable choice of $\l, \mu$ for instance), one can choose $H_{g_F} = \tr_{\cC}K_{g_F|\cC}$, the mean curvature of the boundary $\cC$, as in \cite{Friedrich-Nagy:1999}. 


\section{Local well-posedness and geometric uniqueness for the expanded IBVP} 

In this section, we first use the results above to prove well-posedness of the gauged IBVP's in \eqref{main3}-\eqref{b3} and \eqref{main4}-\eqref{b4}. Following this, we turn to ungauged systems \eqref{main}-\eqref{b1} and \eqref{main2}-\eqref{b2} and the issue of local geometric uniqueness. 

For the following results, recall that $\Upsilon=S\cup \cC$ denotes the initial-boundary surface of the ST-spacetime $M$; let $W$ be a local corner neighborhood around $p \in \Si$ in $M$ and (given a metric $g$) let {$\cD^+(\Upsilon\cap W,g)$ denote the future domain} of dependence of the initial-boundary surface $\Upsilon\cap W$ in $(W, g)$. 

\begin{theorem} (Local well-posedness I)\label{exist1}
The IBVP of the gauged system \eqref{main3}-\eqref{b3} with local extended initial data $I$ as in \eqref{i3} and boundary data $B$ as in \eqref{b3} is locally well-posed in 
$$H^{s} \times H^{s + 1},$$ 
for $s \geq 4$, $s \in \bN^+$. More precisely, suppose in a neighborhood $W$ of a corner point $p$, equipped with a standard corner chart $\chi$, one is given gauged $g$-initial data $(q,k) \in H^{s+\frac{1}{2}}(S\cap W)\times H^{s-\frac{1}{2}}(S\cap W)$ satisfying the constraint equations \eqref{constraint}, and $F$-initial data $(E_0,E_1) \in H^{s+\frac{3}{2}}(S\cap W)\times H^{s+\frac{1}{2}}(S\cap W)$ together with boundary data $(G, [\s], \Theta) \in H^{s+\frac{3}{2}}(\cC\cap W)\times H^{s+\frac{1}{2}}(\cC_0\cap G(W))\times H^{s + \frac{1}{2}}(\cC_0\cap G(W))$ as in \eqref{i3}-\eqref{b3} and satisfying the $C^{s-1}$ compatibility conditions at $\Si \cap W$. 

Then there exists a triple $(U, g, F)$ with a local corner neighborhood $U \subset W \subset M$, $p \in U$, satisfying the following properties: 
\begin{enumerate}
\item The pair $(g, F)$ is a solution of the system \eqref{main3} with  
$$(g,F) \in H^{s}(U)\times H^{s+1}(U).$$
The trace of $(g, F)$ on $\Upsilon\cap U$ is in $H^s(\Upsilon\cap U)\times H^{s+1}(\Upsilon\cap U)$ and realizes the initial and boundary conditions \eqref{i3}-\eqref{b3}. 
\item $U = \cD^+(\Upsilon\cap U,g)$. 
\item On the domain $U$, the solution $(g, F)$ is unique. 
\item The solution $(U, g, F)$ on the fixed domain depends continuously on the initial-boundary data $(I,B)$. 
\end{enumerate}

\end{theorem} 

\begin{proof} Proposition \ref{energy1} gives the existence of strong or boundary stable energy estimates for the frozen coefficient system, i.e.~the linearization of the system at a standard flat configuration. The proof of well-posedness then follows from the general theory of quasi-linear initial-boundary value problems. 

In more detail, consider the linearization of the system \eqref{main3}-\eqref{b3} at any smooth background configuration $(g, F)$. The bulk equations are then a system of linear wave equations, coupled only at lower order. As in Proposition \ref{energy1}, the boundary conditions are the 4 Dirichlet boundary conditions for $F$, 6 Dirichlet boundary conditions for $g$ and 4 gauge boundary conditions $V'_h = 0$, all satisfying the compatibility conditions. Given the existence of energy estimates for the frozen (constant) coefficient system, one obtains existence of energy estimates for the general linearized system by localization in a sufficiently small neighborhood of any corner point $p\in \Si$. This uses a partition of unity, giving local data of compact support, and rescaling, as discussed in \S 2. We refer for example to \cite[Theorem 9.1]{Benzoni-Serre:2007}, for details of this extension of energy estimates for the constant coefficient system to the general linear system. It follows that the general linearization of the system \eqref{main3}-\eqref{b3} at any given background has boundary stable energy estimates. 

The frozen coefficient system admits a reduction to a first order symmetric hyperbolic system, (i.e.~ there exists a Friedrichs symmetrizer) with non-characteristic boundary. The strong or boundary stable $H^s$ energy estimates are equivalent to the statement that the boundary conditions \eqref{b3} are strictly maximally dissipative, cf.~\cite{Benzoni-Serre:2007}, \cite{Sarbach-Tiglio:2012}. It then follows from \cite[Theorem 9.16]{Benzoni-Serre:2007}, that the system \eqref{main3}-\eqref{b3}, linearized at any smooth background $(g, F)$, is well-posed in $C^{r}([0,t], H^{s-r}(S)) \times C^{r}([0,t], H^{s+1-r}(S))$, $0 \leq r \leq s$. 

Finally, by a technically involved argument, the quasi-linear system is proved to be well-posed by a standard iteration or contraction mapping principle applied to a sequence of solutions of the linearized system, cf.~\cite{Rauch-Massey:1974}, \cite{Mokrane:1987}, \cite{Benzoni-Serre:2007}; the particular formulation given in Theorem \ref{exist1} is an application of \cite[Theorem 11.1]{Benzoni-Serre:2007}. 
\end{proof}

\medskip

By applying the same proof as that of Theorem \ref{exist1}, and using Proposition \ref{energy2} in place of Proposition \ref{energy1}, one proves the well-posedness of the system \eqref{main4}-\eqref{b4}. This gives the following result. 
\begin{theorem} (Local well-posedness II) \label{exist2}
The IBVP of the gauged system \eqref{main4}-\eqref{b4} with local extended initial data $I$ as in \eqref{i4} and boundary data $B_{\cC}$ as in \eqref{b4} is locally well-posed in 
$$H^{s} \times H^{s + 1},$$ 
for $s \geq 4$, $s \in \bN^+$.  More precisely, suppose in a neighborhood $W$ of a corner point $p$, equipped with a standard corner chart $\chi$, one is given gauged $g$-initial data $(q,k) \in H^{s+\frac{1}{2}}(S\cap W)\times H^{s-\frac{1}{2}}(S\cap W)$ satisfying the constraint equations \eqref{constraint}, and $F$-initial data $(E_0,E_1) \in H^{s+\frac{3}{2}}(S\cap W)\times H^{s+\frac{1}{2}}(S\cap W)$ as in \eqref{i4} together with boundary data $(G, [\s],H, \Theta_{\cC}) \in H^{s+\frac{3}{2}}(\cC\cap W)\times H^{s+\frac{1}{2}}(\cC_0\cap G(W))\times H^{s-\frac{1}{2}}(\cC_0\cap G(W))\times H^{s + \frac{1}{2}}(\cC_0\cap G(W))$ as in \eqref{b4}, and satisfying the $C^{s-1}$ compatibility conditions at $\Si \cap V$. 

Then there exists a triple $(U, g, F)$ with a local corner neighborhood $U \subset W \subset M$, $p \in U$, satisfying the following properties: 
\begin{enumerate}
\item The pair $(g, F)$ is a solution of the system \eqref{main4} with  
$$(g,F) \in H^{s}(U)\times H^{s+1}(U).$$
The trace of $(g, F)$ on $\Upsilon\cap U$ is in $H^s(\Upsilon\cap U)\times H^{s+1}(\Upsilon\cap U)$ and realizes the initial and 
boundary data \eqref{i4}-\eqref{b4}. 
\item $U = \cD^+(\Upsilon\cap U,g)$. 
\item On the domain $U$, the solution $(g, F)$ is unique. 
\item The solution $(U, g, F)$ on the fixed domain depends continuously on the initial-boundary data $(I,B_\cC)$. 
\end{enumerate}
\end{theorem} 

We note that the `size' of the domain $U$ in the theorems above depends on the prescribed initial-boundary data. However, the domain $U$ on which a solution $(g, F)$ exists is not unique; for example one may consider solutions on domains $U' \subset U$. It is only claimed that on the fixed point-set $U \subset M$, the solution $(g, F)$ is unique. It is well-known that such uniqueness fails on domains $\hat U$ which strictly contain the domain of dependence of their initial-boundary surface, i.e.~for $\hat U$ such that $\cD^+(\hat U\cap \Upsilon,g) \subset \subset \hat U$.

The regularity stated in Theorem \ref{exist1}-\ref{exist2} is likely not optimal in that there is a loss of half of derivative in the statement. This will not be pursued further here, cf.~also \cite[Ch. 11]{Benzoni-Serre:2007}. Note that these theorems also prove well-posedness in the space 
$$(g, F) \in C^{r}(I, H^{s-1-r}(S))\times C^r(I, H^{s-r}(S)),$$
for $0 \leq r \leq s-1$. 

\begin{remark}\label{dropF}
{\rm A simple inspection of the proofs shows that Theorems \ref{exist1} and \ref{exist2} remain valid when the wave map variable $F$ is dropped, specifying then initial-boundary data solely for the gauge-reduced Einstein equations for $g$. Moreover, this may be done with respect to an arbitrary fixed (smooth) foliation $\cF = \{t = const\}$ of $U$. However, as discussed further below, it does not appear possible to glue such local solutions together to obtain solutions on larger domains in general. 
}
\end{remark}

Theorem \ref{exist1} and \ref{exist2} prove local existence and uniqueness of solutions to the gauge-reduced IBVP with extended initial-boundary data $(I, B)$ or $(I, B_{\cC})$. {Local existence of solutions $(g,F)$ with $V_g=0$ to the (ungauged) IBVP \eqref{main}-\eqref{b1} (or \eqref{main2}-\eqref{b2}) follows easily from Lemma \ref{Hvac1} and Theorem \ref{exist1} (or Theorem \ref{exist2}).}
The next result, which uses Lemma \ref{Hvac2}, establish geometric uniqueness of general local solutions to the ungauged IBVP's.

Recall $\mathrm{Diff}(M)$ is the group of diffeomorphisms $\Psi: M \to M$, where $\Psi$ extends to a diffeomorphism of an open neighborhood of $M$ into itself and induces diffeomorphisms $\Psi_{\cC}: \cC \to \cC$, $\Psi_S: S \to S$ and $\Psi_{\Si}: \Si \to \Si$. Let $\mathrm{Diff}_0(M)$ be the group of diffeomorphisms $\Psi$ of $M$ equal to the identity on $\Upsilon = S \cup \cC$. For any local corner neighborhood $U\subset M$, let $\mathrm{Diff}(U)$ denote the space of maps $\Psi:U\to M$ which are diffeomorphisms onto its image $\Psi(U)\subset M$ with $S\cap\Psi(U)=S\cap U$ and $\cC\cap \Psi(U)=\cC\cap U$ and which induce  diffeomorphisms $\Psi|_{S\cap U}:S\cap U\to S\cap \Psi(U)$, $\Psi|_{\cC\cap U}:\cC\cap U\to\cC\cap \Psi(U)$ and $\Psi|_{\Si\cap U}:\Si\cap U\to\Si\cap\Psi(U)$. Also, let ${\rm Diff}_0(U)$ denote the subspace of ${\rm Diff}(U)$ where $\Psi|_{S\cap U}={\rm Id}_{S\cap U}$ and $\Psi|_{\cC\cap U}={\rm Id}_{\cC\cap U}$.
The space $\mathrm{Diff}(U)$ acts on solutions $(g, F)$ by pull-back: 
\be \label{diff00}
(\Psi, (g, F)) \to (\Psi^*g, \Psi^*F),
\ee
where $\Psi^*F = F\circ \Psi$. 
 
\begin{proposition}\label{unique1} (Local Geometric Uniqueness I) Fix an initial-boundary data $({\bf I},{\bf B})$ as in \eqref{I1},\eqref{B1}. A local solution $(U, g, F)$ to the IBVP \eqref{main}-\eqref{b1} with $U = \cD^+(\Upsilon\cap U,g)$ is locally unique up to the action of $\mathrm{Diff}_0(U)$, i.e.~if $U$ is a local corner neighborhood with a standard corner chart $\chi$, and $(g, F),~(\w g, \w F)$ both solve \eqref{main} in $U$, satisfy the initial conditions \eqref{i1} on $S\cap U$, and realize the boundary conditions \eqref{b1} on $\cC\cap U$, then there exists an open subset $U'\subset U$ covering $S\cap U$ and a diffeomorphism $\Psi \in \mathrm{Diff}_0(U')$ such that 
\be \label{locun}
(\Psi^*\w g, \Psi^*\w F) = (g, F).
\ee
In particular $\w g$ is isometric to $g$ in $U'$.
\end{proposition}

\begin{proof}  
Fix a standard corner chart $\chi=\{t,x^i\}$ on $U$. The metrics $g$ and $\w g$ induce the same Riemannian metric $g_S=\w g_S=\g$ and second fundamental form $K_{g|S}=K_{\w g|S}=\k$ on $S\cap U$; the compatibility conditions at the corner imply that they have the same corner geometry at $\Sigma\cap U$. Thus there exists a diffeomorphism $\Psi_0\in {\rm Diff}_0(U)$ such that, in $\chi$, the coordinate  components $(\Psi_0^*\w g)_{\a\b}$ and its time derivative $\p_t(\Psi_0^*\w g)_{\a\b}$ on $S\cap U$ agree with the initial value $(g_{\a\b}, \p_t g_{\a\b})$ of $g$. Meanwhile, since $\Psi_0|_{\Upsilon\cap U}={\rm Id}$, the new triple $(U,g_2=\Psi_0^*\w g,F_2=\Psi_0^*\w F)$ also solves the system \eqref{main}-\eqref{b1} with respect to the fixed initial-boundary data $({\bf I,B})$

By Lemma~\ref{Hvac2}, there is an open subset $U'\subset U$ covering $S\cap U$ and a diffeomorphism $\Psi_1\in {\rm Diff}_1(U')$ such that $\Psi_1^*g$ is in the harmonic gauge, i.e. $V_{\Psi_1^*g}(\chi)=0$. It then follows that the new triple $(U', g_1=\Psi_1^*g,F_1=\Psi_1^*F)$ solves the gauged IBVP \eqref{main3}-\eqref{b3} with respect to an extended local initial-boundary data $(I,B)$ raised from $({\bf I},{\bf B})$. Moreover since $\Psi_1$ equals to identity to the first order on $S\cap U'$, we have 
 \be\label{extend1}
(g_1)_{\a\b}=g_{\a\b},~\p_t(g_1)_{\a\b}=\p_t g_{\a\b}\ \ \mbox{on }S\cap U'.
 \ee
By applying the same argument to the pair $(g_2,F_2)$, we note there is a diffeomorphism $\Psi_2$ such that $(U',\Psi_2^*g_2,\Psi_2^*F_2)$ (shrink $U'$ if necessary) solves the gauged IBVP \eqref{main3}-\eqref{b3} with respect to an extended local initial-boundary data $(\w I,\w B)$ raised from $({\bf I},{\bf B})$ and 
 \be\label{extend2}
 (\Psi_2^*g_2)_{\a\b}=(g_2)_{\a\b}=g_{\a\b},~\p_t (\Psi_2^*g_2)_{\a\b}=\p_t(g_2)_{\a\b}=\p_t g_{\a\b}\ \ \mbox{on }S\cap U'.
 \ee
Equations \eqref{extend1}-\eqref{extend2} further imply that $(I,B)=(\w I,\w B)$, i.e. $(U',g_1,F_1)$ and $(U',\Psi_2^*g_2,\Psi_2^*F_2)$ are both solutions to the gauged system \eqref{main3}-\eqref{b3} with respect to the same initial-boundary data $(I,B)$. Therefore, $(\Psi_2^*g_2, \Psi_2^*F_2)= (g_1, F_1)$ by the uniqueness in Theorem \ref{exist1} above. 
This proves the result. 
\end{proof}

The same discussion and result holds for the IBVP \eqref{main2}-\eqref{b2}.
 
\begin{proposition}\label{unique2}(Local Geometric Uniqueness II)
Fix an initial-boundary data $({\bf I},{\bf B}_\cC)$ as in \eqref{I2},\eqref{B2}. A local solution $(U, g, F)$ to the IBVP \eqref{main2}-\eqref{b2} with $U = \cD^+(\Upsilon\cap U,g)$ is locally unique up to the action of $\mathrm{Diff}_0(U)$, i.e.~if $U$ is a local corner neighborhood with a standard corner chart $\chi$, and $(g, F),~(\w g, \w F)$ both solve \eqref{main2} in $U$, satisfy the initial conditions \eqref{i2} on $S\cap U$, and realize the boundary conditions \eqref{b2} on $\cC\cap U$, then there exists an open subset $U'\subset U$ covering $S\cap U$ and a diffeomorphism $\Psi \in \mathrm{Diff}_0(U')$ such that 
\be \label{locun2}
(\Psi^*\w g,\Psi^* \w F) = (g, F).
\ee
In particular $\w g$ is isometric to $g$ in $U'$. 
\end{proposition}
 
Of course the domain $U$ in the propositions above is not unique. For instance if $(U, g, F)$ is a solution, then so is $(U', g, F)$ for any open subset $U' \subset U$ with $\Upsilon \cap U' = \Upsilon\cap U$ and {$U' = \cD^+(\Upsilon\cap U',g)$. Nevertheless, the same proof as above shows that if $(U_1, g_1, F_1)$ and $(U_2, g_2, F_2)$ are two such solutions with the same initial-boundary data on $\Upsilon\cap U_1 = \Upsilon\cap U_2$, then there are {subdomains $U'_1 \subset U_1$, $U'_2 \subset U_2$ with $S \cap U'_i = S\cap U_i$}, $U'_i = \cD^+(\Upsilon \cap U'_i,g_i)$ and a diffeomorphism $\Psi: U'_1 \to U'_2$, equal to the identity on $\Upsilon\cap U'_1$, such that $\Psi^*(g_2, F_2) = (g_1, F_1)$.

\begin{remark}\label{alt_theta}
{\rm These uniqueness results in Propositions \ref{unique1} and \ref{unique2} no longer hold when the wave map $F$ is dropped from the system. The reason is that boundary quantities of $g$ alone are not invariant under $\mathrm{Diff}_1(U)$ or $\mathrm{Diff}_0(U)$. In particular, the normal component $\nu_g$ is not invariant under $\mathrm{Diff}_1(U)$. 

Thus one of the main reasons for introduction of the wave map $F$, and one of the main consequences of the results above, is that it is possible to establish, in a relatively simple way, the geometric uniqueness result, (and the related gluing procedure), for the expanded IBVP's.

Also, as noted in \eqref{altb}, there are a number of alternate boundary conditions one may impose in place of those for $\Theta$ or $\Theta_{\cC}$ in \eqref{b1} or \eqref{b2} to obtain existence results analogous to Theorems \ref{exist1} and \ref{exist2}. For this, one only requires the linearization of the $\Theta$-equations at flat data $(g_0,F_0)$ to have the same basic form as that analysed in \S 3. However, to preserve the uniqueness results above requires significant restrictions on the choice of boundary equations for $\Theta$; in particular the geometric quantities must be invariant under $\mathrm{Diff}_0(U)$. 
}
\end{remark}

Next we generalize the geometric uniqueness results above by working with the larger diffeomorphism group ${\rm Diff}(M)$.
Using the fact that the timelike normal vectors for $g$ and the pull-back metric $\Psi^*g$ transform as $T_{\Psi^*g} = (\Psi^{-1})_*T_g$ and similarly for the spacelike normal vector $\nu_g$, one easily verifies the following transformation rule for the initial and boundary geometry of the pairs $(g,F)$ and $(\Psi^*g,\Psi^*F)$ with $\Psi\in{\rm Diff}(M)$:
\bes
(\Psi^*g)_S=\Psi_S^*(g_S),~K_{\Psi^*g|S}=\Psi_S^*K_{g|S},~(\Psi^*F)_*(N_{\Psi^*g})=F_*(N_g)\ \ \mbox{on }S.
\ees
and
\bes
[\big((\Psi^*F)^{-1}\big)^*(\Psi^*g)^\intercal]=[(F^{-1})^*g^\intercal],~(\Psi^*F)_*(T_{\Psi^*g}+\nu_{\Psi^* g})=F_*(T_g+\nu_g)\ \ \mbox{on }\cC.
\ees
Thus for a general initial-boundary data $({\bf I,B})$ given in \eqref{I1}-\eqref{B1}, we define the action by the group ${\rm Diff}(\Upsilon)$ on it as
\be \label{dif}
\psi^*{\bf I}=\big( \psi^*\g,\psi^*\k, E_0\circ \psi, E_1\big),\ \ \psi^*{\bf B}=\big(G\circ\psi, [\s],\Theta\big).
\ee
Here $\psi$ belongs to the group ${\rm Diff}(\Upsilon)$ of diffeomorphisms $\psi:\Upsilon\to\Upsilon$ such that $\psi:S\to S$ and $\psi:\cC\to\cC$. It is crucial for the uniqueness results discussed below that the boundary data $([\s], \Theta)$ are invariant under the action of the gauge group $\mathrm{Diff}(\Upsilon)$.

Define two collections of initial-boundary data ${\bf I}_i=\big(\g_i,\k_i,(E_0)_i,(E_1)_i\big)$ and ${\bf B}_i= \big(G_i,[\s_i],\Theta_i\big)$, $(i=1,2)$ to be \textit{equivalent} if there is a diffeomorphism $\psi\in\mathrm{Diff}(\Upsilon)$ such that
\be \label{equiv}
\psi^*({\bf I}_2, {\bf B}_2) = ({\bf I}_1, {\bf B}_1).
\end{equation}
The same relation holds for the $({\bf I}, {\bf B}_{\cC})$ initial-boundary data, i.e. ~$({\bf I}_1, ({\bf B}_{\cC})_1) \sim ({\bf I}_2, ({\bf B}_{\cC})_2)$ if and only if there is a diffeomorphism $\psi \in \mathrm{Diff}(\Upsilon)$ such that
\be \label{equiv2}
\psi^*({\bf I}_2, ({\bf B}_{\cC})_2) = ({\bf I}_1, ({\bf B}_{\cC})_1).
\end{equation}

Now apply these equivalence relations locally in a local corner neighborhood $U$. 

\begin{corollary}\label{unique3}(Local Geometric Uniqueness III)
If $(U, g_1,F_1)$ and $(U, g_2,F_2)$ are two solutions to the IBVP \eqref{main}-\eqref{b1} (or \eqref{main2}-\eqref{b2}) with respect to initial-boundary data that are equivalent in the sense of \eqref{equiv} (or \eqref{equiv2}) on $\Upsilon\cap U$, then there exists an open subset $U'\subset U$ covering $S\cap U$ and a diffeomorphism $\Psi \in \mathrm{Diff}(U')$ such that 
\be \label{locun3}
\Psi^*g_2=g_1\ {\rm and} \ \Psi^{*}F_2 = F_1\ \ \text{in }U'.
\ee
\end{corollary}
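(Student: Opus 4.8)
The plan is to reduce the statement to the equal-data uniqueness result already established in Proposition~\ref{unique1} (respectively Proposition~\ref{unique2}) by absorbing the boundary equivalence $\psi$ into a pullback. First I would promote the boundary diffeomorphism $\psi \in \mathrm{Diff}(\Upsilon)$ realizing $\psi^*({\bf I}_2, {\bf B}_2) = ({\bf I}_1, {\bf B}_1)$ on $\Upsilon \cap U$ to a diffeomorphism $\bar\psi \in \mathrm{Diff}(U)$ of the corner neighborhood with $\bar\psi|_{\Upsilon \cap U} = \psi$; such an extension exists since $\psi$ preserves each of $S$, $\cC$ and $\Si$ near the corner, so one may extend it collar-by-collar into the interior. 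Because both the vacuum equation $\Ric_g = 0$ and the wave map equation in \eqref{main} (equivalently, its variational characterization as a critical point of the Dirichlet energy) are natural under domain diffeomorphisms, the pullback $(\bar\psi^* g_2, \bar\psi^* F_2)$, where $\bar\psi^* F_2 = F_2 \circ \bar\psi$, is again a solution of the same bulk system on $\bar\psi^{-1}(U)$ (shrinking $U$ if necessary).

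The key step is to check that $(\bar\psi^* g_2, \bar\psi^* F_2)$ realizes exactly the initial-boundary data $({\bf I}_1, {\bf B}_1)$. This is a direct consequence of the transformation rules displayed immediately before \eqref{dif} together with the definition \eqref{dif} of the $\mathrm{Diff}(\Upsilon)$-action: the rules $(\Psi^*g)_S = \Psi_S^*(g_S)$, $K_{\Psi^*g|S} = \Psi_S^* K_{g|S}$ and $\Psi^* F_*(N_{\Psi^*g}) = F_*(N_g)$ show that the induced initial data of $(\bar\psi^* g_2, \bar\psi^* F_2)$ is precisely $\psi^*{\bf I}_2$, while the rules $[((\Psi^*F)^{-1})^*(\Psi^*g)^\intercal] = [(F^{-1})^*g^\intercal]$ and $\Psi^* F_*(T_{\Psi^*g} + \nu_{\Psi^*g}) = F_*(T_g + \nu_g)$ --- encoding the crucial invariance of $([\s], \Theta)$ under the gauge group --- show the realized boundary data is $\psi^*{\bf B}_2$. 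By the equivalence hypothesis \eqref{equiv}, $\psi^*({\bf I}_2, {\bf B}_2) = ({\bf I}_1, {\bf B}_1)$, so $(\bar\psi^* g_2, \bar\psi^* F_2)$ and $(g_1, F_1)$ solve \eqref{main}-\eqref{b1} with identical initial-boundary data on $\Upsilon \cap U$.

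With equal data in hand, Proposition~\ref{unique1} furnishes a subdomain $U' \subset U$ covering $S \cap U$ and a diffeomorphism $\Phi \in \mathrm{Diff}_0(U')$ with $\Phi^*(\bar\psi^* g_2) = g_1$ and $\Phi^*(\bar\psi^* F_2) = F_1$. Setting $\Psi = \bar\psi \circ \Phi$ and using $(\bar\psi \circ \Phi)^* = \Phi^* \circ \bar\psi^*$ then yields $\Psi^* g_2 = g_1$ and $\Psi^* F_2 = F_2 \circ \Psi = \Phi^*(\bar\psi^* F_2) = F_1$, with $\Psi \in \mathrm{Diff}(U')$ and $\Psi|_{\Upsilon \cap U'} = \psi$; for the $({\bf I}, {\bf B}_{\cC})$ case one invokes Proposition~\ref{unique2} in place of Proposition~\ref{unique1}.

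Since the genuine analytic content --- the equal-data uniqueness --- is already supplied by Propositions~\ref{unique1}--\ref{unique2}, the only real work here is bookkeeping: producing the extension $\bar\psi$ with the correct boundary regularity and carefully matching the transformed data. The main (mild) obstacle is verifying that the pulled-back boundary data lands exactly on $\psi^*{\bf B}_2$, which hinges entirely on the $\mathrm{Diff}(\Upsilon)$-invariance of the conformal class $[\s]$ and the vector field $\Theta$ emphasized after \eqref{dif}; once this invariance is granted, the composition $\Psi = \bar\psi \circ \Phi$ assembles the desired diffeomorphism with essentially no further computation.
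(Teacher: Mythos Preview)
Your proof is correct and follows essentially the same approach as the paper: extend the boundary diffeomorphism $\psi$ to a bulk diffeomorphism (your $\bar\psi$, the paper's $\Psi_1$), verify that the pulled-back solution $(\bar\psi^*g_2,\bar\psi^*F_2)$ realizes the data $({\bf I}_1,{\bf B}_1)$ via the transformation rules preceding \eqref{dif}, apply Proposition~\ref{unique1} (or \ref{unique2}) to obtain $\Phi\in\mathrm{Diff}_0(U')$, and compose $\Psi=\bar\psi\circ\Phi$. Your write-up is slightly more explicit than the paper's in checking that the transformed initial and boundary data match, but the argument is the same.
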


\begin{proof} 
The proof is the same for both systems, so we work with the system \eqref{main}-\eqref{b1}. Let $(g_1, F_1)$ and $(g_2, F_2)$ 
be solutions in $U$ satisfying \eqref{main}-\eqref{b1} with respect to the initial-boundary data $({\bf I}_1, {\bf B}_1)$ and $({\bf I}_2, {\bf B}_2)$ respectively. 
Suppose $({\bf I}_1, {\bf B}_1)$ and $({\bf I}_2, {\bf B}_2)$ are related as in \eqref{equiv}, so that there is a diffeomorphism $\psi \in \mathrm{Diff}(\Upsilon\cap U)$ such that $\psi^*({\bf I}_2, {\bf B}_2)$ = $({\bf I}_1, {\bf B}_1)$. Extend $\psi$ to a diffeomorphism $\Psi_1 \in \mathrm{Diff}(U)$. Then $\Psi_1^*(g_2,F_2)$ becomes a solution with initial-boundary data $({\bf I}_1, {\bf B}_1)$. By Proposition \ref{unique1}, there is a diffeomorphism $\Psi_2 \in \mathrm{Diff}_0(U')$ for some $U'\subset U$ covering $S\cap U$ such that $\Psi_2^*\Psi_1^*(g_2, F_2) = (g_1, F_1)$ in $U'$. Thus \eqref{locun3} holds with $\Psi = \Psi_1\circ\Psi_2$. In particular we have $\Psi|_{\Upsilon\cap U'}=\psi$.
\end{proof}

The local geometric uniqueness above for the IBVP of the expanded system of $(g,F)$ is the same as that for the Cauchy problem of the Einstein equations. Namely, for the Cauchy problem recall that if two solutions have equivalent initial data $(S, \g,\k)$, then there exists a local $4$-diffeomorphism, i.e.~isometry, relating the two solutions, restricting to suitable domains if necessary. Similarly here, to check whether two sets of initial-boundary data generate isometric solutions, it is sufficient to examine the equivalence relation \eqref{equiv} or \eqref{equiv2} and one does not need to solve the IBVP explicitly. 



\section{Gluing and geometric uniqueness for the IBVP.} 

Up until this point, all the discussion has been local, in a sufficiently small neighborhood $U$ of a corner point $p \in \Si$ and for the expanded system of $(g, F)$. We now turn to the global (in space) issue of existence and geometric uniqueness of the IBVP in a full ST-neighborhood of the initial surface $S$ in $M$. This is obtained by gluing local solutions together, using local geometric uniqueness. This gluing is first carried out in a full {ST-corner neighborhood} $\cU$ for the expanded system in Theorem \ref{corner_exist1} below. {By applying these results for the pair $(g,F)$, we then obtain in Theorem \ref{corner_exist2} similar results for the IBVP of the Einstein metric $g$ alone, as discussed in \S 1. }
 
To pass to the interior, away from the corner $\Si$, recall that given initial data $(\g,\k)$ on $S$, it is proved in \cite{Choquet-Geroch:1969} that there is a maximal solution of the Cauchy problem of vacuum Einstein equations, i.e.~a maximal globally hyperbolic vacuum spacetime $(M_{\mathring{S}},g)$ where $\mathring{S}=S\setminus\Si$ is embedded in $M_{\mathring{S}}$ as a Cauchy surface with induced metric and second fundamental form $(g_{\mathring S},K_{g|\mathring{S}})$ equal to $\bI = (\g,\k)$. To obtain vacuum developments of the full initial-boundary data $(\Upsilon, \bI, \bB)$ in Theorem \ref{exist}, we will show that the solution $(\cU, g)$ around the corner (referred as a ``boundary vacuum development" in Theorem \ref{corner_exist2}) may be smoothly patched with the interior solution $(M_{\mathring S},g)$, giving then a global development. A similar analysis applies well to the expanded system for $(g, F)$, cf.~Remark~\ref{G_gF}.

As noted above, (up to isometry) solutions $g$ of the Cauchy problem for the vacuum Einstein equations do not depend on any choice of gauge or local coordinates while the solution near the boundary $\cC$ are gauge (i.e.~$\f_g$ or $F$) dependent. This is another 
reason that the analysis needs to be separated into the (pure) Cauchy problem (without boundary) and the IBVP in a neighborhood $\cU$ 
of the boundary $\cC$. 

\subsection{Vacuum Developments}
We first make a couple of definitions. The arguments to follow regarding pairs $(g, F)$ do not depend on the choice of ${\bf B}$ or ${\bf B}_{\cC}$ boundary data, so we will not distinguish ${\bf B}$ and ${\bf B}_{\cC}$ and use $\cB$ to denote either one of them. 
Recall that $\Upsilon=S\cup\cC$ is the initial-boundary surface of $M$. In the following, we also identify $\Upsilon$ with $S_0 \cup \cC_0 \subset M_0$ via a fixed, but arbitrary diffeomorphism identifying $S\to S_0$ and $\cC\to \cC_0$, (unrelated to the wave map $F$).
An {\it initial-boundary data set} $(\Upsilon, {\bf I}, {\cB})$ consists of an initial-boundary surface $\Upsilon$, an initial data ${\bf I}$ as in \eqref{I1} or \eqref{I2} defined on $S$, and a boundary data ${\bf B}$ as in \eqref{B1} or ${\bf B}_\cC$ as in \eqref{B2}, (where we understand $G:\cC\to \cC_0$ as a diffeomorphism and $([\s],\Theta)$ in \eqref{B1} or $([\s],H,\Theta_\cC)$ in \eqref{B2} defined on $G(\cC)\subset \cC_0$). Both $({\bf I, B})$ and $({\bf I, B}_\cC)$ satisfy the $C^{s-1}$ compatibility conditions on $\Si$. As previously, we fix a time function $t_0$ and a complete Riemannian metric $g_R$ on the target space $M_0$.

Let $\rho_0$ denote the distance function to $\Si$ on $S$ with respect to the background Riemannian metric $g_R$. A {\it partial initial-boundary 
surface} $P \subset \Upsilon$ is an initial-boundary surface of the form 
$$P = P_{\rho_0}\cup (\cC_G)_{\tau}$$
where $P_{\rho_0} \subset S$ is the $\rho_0$-tubular neighborhood of $\Si$ in $S$ with respect to $g_R$ and $(\cC_G)_{\tau} = \{p \in \cC: t_0\circ G(p)\in [0,\tau)\}$ with $G$ the Dirichlet boundary data of $F$ given in $\cB$. We will allow $\tau = \infty$ but assume $\rho_0$ is small, so that $P_{\rho_0} \cong I\times \Si$.  A partial initial-boundary data set $(P, {\bf I}, {\cB})$ is defined as the restriction of $(\Upsilon,{\bf I}, {\cB})$ to the subset $P$.

Recall that an ST-corner neighborhood $\cU$ is an open neighborhood of the corner $\Sigma$, which contains the entire corner $\Si$ and admits spacelike initial surface $S\cap\cU$ and timelike boundary $\cC\cap\cU$.

\begin{definition}\label{vacf_dep}
A \textit{boundary vacuum development with gauge fields} for the partial data set $(P,\mathbf{I},\cB)$ is an {ST-corner neighborhood} $\cU \subset M$, equipped with a pair $(g,F)$ such that:
\begin{enumerate}
\item $(g,F)$ solves \eqref{main} in $\cU$.
\item $\cU\cong [0,\rho_0)\times (\cC_G)_{\tau}$ is diffeomorphic to a product neighborhood of $(\cC_G)_{\tau}$, with its initial-boundary surface identified with $P$ in a natural way. 
\item  $P_{\rho_0}\cap\cU$ is spacelike and $(\cC_G)_{\tau}\cap\cU$ is timelike in $(\cU, g)$.
\item $F$ is a diffeomorphism from $\cU$ onto a domain $\cU_0\subset M_0$.
\item $(\cU,g,F)$ satisfies the conditions \eqref{i1}-\eqref{b1} or \eqref{i2}-\eqref{b2} with the given initial-boundary data $({\bf I},{\bf B})$ or $({\bf I}, {\bf B}_\cC)$ on $P$.
\item By choosing a smaller neighborhood if necessary, we require that 
$\cU=\cD^+(\Upsilon\cap \cU,g)$, i.e.~$\cU$ is equal to the future domain of dependence of its initial-boundary surface $\Upsilon\cap \cU$ in $(\cU,g)$.
\end{enumerate}
\end{definition}

Recall from Remark \ref{loc_diff} (ii) that for any local solution $(U, g, F)$ near $\Si$, the map $F$ is a diffeomorphism 
from its domain onto its image $F(U) \subset M_0$ in a neighborhood of $\Si$.

We first prove the semi-global existence result for the expanded IBVP's \eqref{main}-\eqref{b1} and \eqref{main2}-\eqref{b2}.

\begin{theorem}\label{corner_exist1}
{Let $(P, {\bf I}, {\cB})$ be a partial initial-boundary data set on $P = P_{\rho_0} \cup (\cC_G)_{\tau}$}, with $g$-initial data 
$(\g,\k) \in H^{s+\frac{1}{2}}(P_{r_0})\times H^{s-\frac{1}{2}}(P_{r_0})$ 
satisfying the vacuum constraint equations \eqref{constraint}, and $F$-initial data 
$(E_0,E_1) \in H^{s+\frac{3}{2}}(P_{r_0})\times H^{s+\frac{1}{2}}(P_{r_0})$ 
as in \eqref{I1}, together with boundary data 
$(G, [\s], \Theta) \in H^{s+\frac{3}{2}}((\cC_G)_{\tau})\times H^{s+\frac{1}{2}}(\cC_{\tau})\times H^{s + \frac{1}{2}}(\cC_{\tau})$ 
as in \eqref{B1} (or  
$(G, [\s], H, \Theta_{\cC}) \in H^{s+\frac{3}{2}}((\cC_G)_{\tau})\times H^{s+\frac{1}{2}}(\cC_{\tau})\times H^{s - \frac{1}{2}}(\cC_{\tau})\times H^{s+\frac{1}{2}}(\cC_{\tau})$ 
as in \eqref{b4}), satisfying the $C^{s-1}$ compatibility conditions. 
Then there exists $\tau' > 0$, $\rho_0' > 0$ so that, for the subset $P'=P_{\rho_0'}\cup (\cC_G)_{\tau'}$, $(P',{\bf I},\cB)$ admits a boundary vacuum development with gauge fields, i.e.~a triple $(\cU,g,F)$ with 
\be \label{regs1}
(g,F) \in H^{s}(\cU)\times H^{s+1}(\cU),
\ee
and $(g,F)$ has trace on $\Upsilon\cap\cU$ in $H^s(\Upsilon\cap\cU)\times H^{s+1}(P')$ realizing all the conditions in Definition \ref{vacf_dep}. 

Moreover, two boundary vacuum developments with gauge fields for the same partial initial-boundary data $(P,{\bf I},\cB)$ are isometric in a neighborhood of a subset $P'\subset P$. 
\end{theorem}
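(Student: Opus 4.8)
The plan is to build the boundary collar by patching the local solutions furnished by Theorems \ref{exist1}--\ref{exist2} and to control the overlaps by the local geometric uniqueness of Propositions \ref{unique1}--\ref{unique2}. Since $\Si=\p S$ is compact, I would first cover a neighborhood of $\Si$ in $\Upsilon$ by finitely many standard corner charts $\chi_a$ ($a=1,\dots,N$), and cover the portion of $(\cC_G)_{\tau}$ at positive times by finitely many non-corner boundary charts. On each corner chart I extend the geometric data $(\mathbf I,\cB)$ to gauged local data by choosing lapse--shift gauge source functions with $V_g=0$ on $S$ as in \S 2, and apply Theorem \ref{exist1} (or Theorem \ref{exist2} when $\cB=\mathbf B_\cC$) to obtain a local solution $(U_a,g_a,F_a)$ with $U_a=\cD^+(\Upsilon\cap U_a,g_a)$ and regularity as in \eqref{regs1}; the non-corner boundary charts are handled by the same energy estimates of \S 3 with the corner replaced by a single initial slice. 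By compactness, the finitely many local existence times furnish uniform constants $\tau'>0$ and $\rho_0'>0$.

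The essential step is the consistency of the transition maps, and this is exactly where the wave map $F$ is decisive. On an overlap $U_a\cap U_b$, the pairs $(g_a,F_a)$ and $(g_b,F_b)$ solve the same ungauged system \eqref{main}--\eqref{b1} (by Lemma \ref{Hvac1}, $g_a$ and $g_b$ are vacuum) with identical geometric initial-boundary data, so Proposition \ref{unique1} yields $\Psi_{ab}\in\mathrm{Diff}_0$ on a subdomain with $\Psi_{ab}^*(g_b,F_b)=(g_a,F_a)$. In particular $F_b\circ\Psi_{ab}=F_a$; since each $F_c$ is a diffeomorphism onto its image in $M_0$ (Remark \ref{loc_diff}(ii)), the transition is forced to be
$$\Psi_{ab}=F_b^{-1}\circ F_a.$$
Written this way, the cocycle identity is automatic: $\Psi_{bc}\circ\Psi_{ab}=(F_c^{-1}\circ F_b)\circ(F_b^{-1}\circ F_a)=F_c^{-1}\circ F_a=\Psi_{ac}$ on triple overlaps, and $\Psi_{ab}=\mathrm{Id}$ on $\Upsilon$ because $F_a=F_b$ there (both equal $E_0$ on $S$ and $G$ on $\cC$). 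The family $\{\Psi_{ab}\}$ therefore glues the $U_a$ into a single manifold-with-corner $\cU$ on which the $g_a$ and $F_a$ descend to a well-defined pair $(g,F)$; restricting to $\cU=\cD^+(\Upsilon\cap\cU,g)$ realizes all the conditions of Definition \ref{vacf_dep}.

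For the final (uniqueness) assertion, given two boundary vacuum developments $(\cU_1,g_1,F_1)$ and $(\cU_2,g_2,F_2)$ of the same partial data, I would cover a neighborhood of $P$ by corner and boundary charts as above and apply Propositions \ref{unique1}--\ref{unique2} chart by chart. On each chart the comparison diffeomorphism is again forced to equal $F_2^{-1}\circ F_1$; being independent of the chart, these patch to a single isometry $\Psi=F_2^{-1}\circ F_1$ defined on a neighborhood of some subset $P'\subset P$, with $\Psi^*(g_2,F_2)=(g_1,F_1)$.

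The main obstacle is precisely this consistency of the gauge transformations across overlaps. In a purely metric formulation the diffeomorphisms produced by uniqueness are only defined locally and need not satisfy the cocycle condition; this is the obstruction to globalization noted in Remark \ref{alt_theta}. The device that removes it is the $\mathrm{Diff}(\Upsilon)$-invariance of the boundary data $([\s],\Theta)$ together with the equivariance \eqref{equif} of the associated wave map, which route every transition through the fixed target $M_0$ as $F_b^{-1}\circ F_a$ and thereby turn the cocycle condition into a formal identity rather than something to be checked. The residual work is technical bookkeeping: extracting the uniform $\tau',\rho_0'$ from the finite cover, matching the domains of dependence of neighboring charts, and tracking the half-derivative loss recorded in \eqref{regs1}.
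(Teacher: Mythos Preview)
Your proposal is correct and rests on the same key observation as the paper: the local uniqueness Propositions \ref{unique1}--\ref{unique2} force the transition diffeomorphism on each overlap to be $F_b^{-1}\circ F_a$, which makes the cocycle identity tautological and hence the patching unobstructed. The uniqueness argument is likewise identical.

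The paper organizes the gluing slightly differently, and the comparison is worth recording. Rather than gluing the source patches $U_a\subset M$ via the cocycle $\{\Psi_{ab}\}$, the paper pushes each local solution forward to the target: setting $\mathring U_a=F_a(U_a)\subset M_0$ and $g_{F_a}=(F_a^{-1})^*g_a$, the pair $(g_{F_a},\mathrm{Id}_{\mathring U_a})$ solves the same IBVP with wave map equal to the identity. On overlaps in $M_0$, Proposition \ref{unique1} then yields $\psi$ with $\mathrm{Id}\circ\psi=\mathrm{Id}$, hence $\psi=\mathrm{Id}$ and the pushed-forward metrics literally agree; the gluing in $M_0$ is therefore trivial, with no quotient to form. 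One then pulls back by any fixed diffeomorphism $F:M\to M_0$ with $F|_S=E_0$, $F|_{\cC}=G$ to obtain $\cU=F^{-1}(\mathring\cU)\subset M$. This buys you Definition \ref{vacf_dep}(2)--(4) for free (in particular that $\cU$ sits inside $M$ and $F$ is a global diffeomorphism onto its image), whereas your abstract quotient $\bigsqcup U_a/\Psi_{ab}$ must still be identified with a subset of $M$; of course your formula $\Psi_{ab}=F_b^{-1}\circ F_a$ shows that the quotient maps diffeomorphically to $\bigcup F_a(U_a)\subset M_0$, which is exactly the paper's $\mathring\cU$, so the two routes coincide. A minor point: your inclusion of ``non-corner boundary charts at positive times'' is unnecessary, since the theorem permits $\tau'$ small and the paper simply covers a neighborhood of $\Si$.
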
 

\begin{proof} Here we give the proof in the case $\cB={\bf B}$. The same proof works for $\cB={\bf B}_{\cC}$.
By the local existence theorem, Theorem \ref{exist1}, for any point $p\in\Sigma$, there exists an open neighborhood $V\ni p$ in $P$ admitting a local vacuum development, i.e. in a local corner neighborhood $U$ with $U\cap P=V$, $(U,g,F)$ solves the system \eqref{main}-\eqref{b1} with respect to the initial-boundary data $({\bf I},{\bf B})$ (restricted on $V$). Choose then a finite collection of open subsets $\{V_n\}_{n=1}^m$ of $P$ covering the corner $\Sigma$. Each $V_n$ is equipped with initial-boundary data $(\mathbf I_n,\mathbf B_n)$ obtained by restricting $(\bf I,B)$ to $V_n$ and each $(V_n,\mathbf I_n,\mathbf B_n)$ admits a local vacuum development $(U_n,g_n,F_n)$. When two subsets $V_n, V_m$ overlap, their vacuum developments can be patched together in the following way.

Let $\mathring{U}_n$ denote the image of $U_n$ under $F_n$ i.e. $\mathring{U}_n=F_n(U_n)\subset M_0$ and let $g_{F_n}$ denote the pull-back metric $(F_n^{-1})^*g_n$ on $\mathring{U}_n$. Then $(g_{F_n},{\rm Id}_{\mathring{U}_n})$ is a solution to \eqref{main}-\eqref{b1} on $\mathring{U}_n$ with the initial-boundary data $\mathring{\bf I}_n$ on $S_0\cap \mathring{U}$ and $\mathring{\bf B}_n$ on $\cC_0\cap\mathring{U}$ given by
\begin{equation*}
\mathring{\bf I}_n=\big((E_0^{-1})^*\g,(E_0^{-1})^*\k,{\rm Id}_{S_0\cap \mathring{U}_n},E_1\big),\ \ \mathring{\bf B}_n=\big({\rm Id}_{\cC_0\cap \mathring{U}_n},[\s],\Theta\big).
\end{equation*}
The same applies to $(U_m,g_m,F_m)$, so we obtain a triple $(\mathring{U}_m,g_{F_m},{\rm Id}_{\mathring{U}_m})$.
Observe that on the common overlapping initial-boundary surface $\Upsilon\cap \mathring{U}_n\cap \mathring{U}_m$, the pairs $(g_{F_n},{\rm Id}_{\mathring{U}_{n}})$ and $(g_{F_m},{\rm Id}_{\mathring{U}_{m}})$ have the same initial-boundary data. By geometric uniqueness Proposition \ref{unique1}, there is a subdomain $\mathring{U}_{nm}$ covering $\Upsilon\cap \mathring{U}_n\cap \mathring{U}_m$ and a diffeomorphism $\psi\in\mathrm{Diff}_0 (\mathring{U}_{mn})$ such that $g_{F_n}=\psi^*g_{F_m}$ and ${\rm Id}_{\mathring{U}_{n}}={\rm Id}_{\mathring{U}_{m}}\circ \psi$ on $\mathring{U}_{mn}$. Obviously from the latter equation, $\f={\rm Id}_{\mathring{U}_{nm}}$. Hence 
$$g_{F_n}=g_{F_m}\mbox{ in }\mathring{U_{mn}}.$$
It follows by induction that the local metrics $g_{F_n}$ can be trivially glued together to obtain a solution $(\mathring{g},{\rm Id}_{\mathring{\cU}})$ on some ST-corner neighborhood $\mathring{\cU}$ of $\Si_0$ in $M_0$ satisfying \eqref{main}-\eqref{b1} with initial-boundary data given by
\begin{equation*}
\mathring{\bf I}=\big( (E_0^{-1})^*\g,(E_0^{-1})^*\k,{\rm Id}_{S_0\cap \mathring{\cU}},E_1\big),\ \ \mathring{\bf B}=\big({\rm Id}_{\cC_0\cap \mathring{\cU}},[\s],\Theta\big).
\end{equation*}
Since $\mathring{\cU}$ is patched up by finite local solutions, it is easy to adjust the domain so that $\cC_0\cap\mathring{\cU}=\{t_0\in[0,\tau')\}$ for some $\tau'>0$ and $S_0\cap\mathring{\cU}=E_0(P_{\rho_0'})$ for some $\rho_0'>0$. Now construct a diffeomorphism $F:M\to M_0$ such that $F|_S=E_0$ and $F|_{\cC}=G$. Let $g=F^*\mathring{g}$ and $\cU=F^{-1}(\mathring{\cU})$. Then it is easy to check that $(\cU,g,F)$ is a boundary vacuum development with gauge fields of some sub-data $(P',{\bf I},{\bf B})$ of $(P,{\bf I},{\bf B})$.

Next let $(\cU_1,g_1,F_1)$ and $(\cU_2,g_2,F_2)$ be a pair of boundary vacuum developments with gauge fields of the same $(P,{\bf I},\cB)$. By local uniqueness Proposition \ref{unique1} or \ref{unique2}, at every corner point $p\in\Si$ there is an open neighborhood $U$ and a diffeomorphism $\psi\in{\rm Diff}_0(U)$ such that $\psi^*g_1=g_2$ and $F_1\circ\psi=F_2$ on $U$. Since $F_i~(i=1,2)$ is a local diffeomorphism, the second equation uniquely determines $\psi=F_1^{-1}\circ F_2$. Patching up naturally such local neighborhoods, we obtain an ST-corner neighborhood $\cU$ covering a subset $P'\subset P$ in which $\Psi^*g_1=g_2$ and $F_1\circ\Psi=F_2$ for the unique $\Psi\in{\rm Diff}_0(\cU)$ determined by $\Psi=F_1^{-1}\circ F_2$.
\end{proof}

\begin{remark}
{\rm As noted in Remark \ref{dropF}, with regard to local existence one may drop the wave map $F$ and locally solve the IBVP for the metric $g$ with ${\bf B}$ or ${\bf B}_{\cC}$ data. This is done with respect to a local chart $\chi: U \to {\bf R}$ in which the coordinate functions are $g$-harmonic. Suppose $\chi': U' \to {\bf R}$ is another local chart with $U \cap U' \neq \emptyset$ giving rise to a solution $g'$ in $U'$. If the chart $\chi$ is affinely related to the $\chi$ chart on $U \cap U'$, then the coordinates of $\chi'$ are also harmonic with respect to $g$, and so the uniqueness in Theorem \ref{exist1} implies that $g' = g$ on $U \cap U'$.  

In this very special case, where the domain of $g$ has an atlas of affinely related charts preserving the manifold-with-corner structure, (so the domain has an affine-flat structure) with corresponding affine-related initial-boundary data, one may patch together local solutions to obtain a larger solution $g$. However, there appears to be no method to prove such solutions are unique. 
}
\end{remark}

We proceed to discuss the analogs of these results for the IBVP of Einstein equations \eqref{maing3}-\eqref{bg3} which involves the associated gauge $\f_g$ determined uniquely and implicitly by $g$ as in \eqref{mainf3}-\eqref{bf3}. { Here and in the following, we will always assume a fixed but arbitrary choice of boundary gauge $\Theta_{\cC}$. Moreover, as discussed in \eqref{cibc3}, we assume $\Theta_\cC$ is chosen so that
\be\label{corner1}
\Theta_\cC=\ell T_{g_R} \mbox{ on }\Si_0
\ee
for some function $\ell $ on $\Si_0$ such that {$0<\ell<\tfrac{1}{\sqrt{2}}$}.
}

Note first that given a fixed metric $g$ satisfying compatibility conditions at the corner, there is a unique solution $\f_g$ of the system \eqref{mainf3}-\eqref{bf3} in an ST-corner neighborhood $\cU$. Since the boundary conditions for $\f_g$ are a simple combination of Sommerfeld and Dirichlet boundary conditions, this existence and uniqueness follows by standard results for IBVP's of systems of semi-linear wave equations. The conditions on the initial data $(E_{g_S}, T_{g_R})$ imply that $\f_g: \cU \to \cU_0 \subset M_0$ is a diffeomorphism onto its image $\cU_0$. The uniqueness also gives the equivariance property \eqref{equif}. 
  
Recall that ${\mathbb I}=(\g,\k),~{\bB}=([\s],H)$ denote the initial-boundary data in the system \eqref{maing3}-\eqref{bg3}. In the following the initial-boundary data set $(\Upsilon,{\mathbb I},\bB)$ consists of a initial-boundary surface $\Upsilon$, an initial data ${\bI}$ defined on $S$ and a boundary data ${\bB}$ defined on $\cC_0$. A partial initial-boundary data set $(P,{\mathbb I},\bB)$ is a subset $P=P_{\rho_0}\cup \cC_\tau$ of $\Upsilon$ with $(\bI,\bB)$ restricted on it, where $P_{\rho_0}\subset S$ is the tubular neighborhood of $\Si$ defined as above, and $\cC_\tau=\{p\in\cC_0:t_0(p)\in[0,\tau)\}$.

\begin{definition}\label{vac_dep}
A \textit{boundary vacuum development} for the partial initial-boundary data set $(P,\mathbb{I},\bB)$ is an {ST-corner neighborhood}  $\cU\subset M$, equipped a Lorentz metric  $g$ on $\cU$ such that:
\begin{enumerate}
\item $g$ is Ricci-flat on $\cU$.
\item The unique associated gauge $\f_g$ for  $g$ via \eqref{mainf3}-\eqref{bf3} is a diffeomorphism from $\cU$ onto its image in $M_0$.
\item $\cU\cong [0,\rho_0)\times (\cC_{\phi_g})_{\tau}$ is diffeomorphic to a product neighborhood of $(\cC_{\phi_g})_{\tau}=\{p\in\cC\cap\cU: t_0\circ\phi_g(p)\in[0,\tau)\}$ with its initial and boundary surface identified with $P$ naturally.
\item  $P_{\rho_0}\cap\cU$ is spacelike and $(\cC_{\phi_g})_{\tau}\cap\cU$ is timelike in $(\cU, g)$.
\item $(\cU,g)$ satisfies the conditions \eqref{ig3}-\eqref{bg3} with the given initial-boundary data $(\mathbb I,\bB)$ on $P$.
\item By choosing a smaller neighborhood if necessary, we require that $\cU=\cD^+(\Upsilon\cap \cU,g)$.
\end{enumerate}
\end{definition}

The semi-global analog of Theorem \ref{exist2} is:

\begin{theorem}\label{corner_exist2}
Let $(P, {\mathbb I}, {\bB})$ be a partial initial-boundary data set on $P = P_{\rho_0} \cup \cC_{\tau}$, with $g$-initial data $(\g,\k) \in H^{s+\frac{1}{2}}(P_{r_0})\times H^{s-\frac{1}{2}}(P_{r_0})$ satisfying the constraint equations \eqref{constraint}, together with boundary data $( [\s], H) \in H^{s+\frac{1}{2}}(\cC_{\tau})\times H^{s-\frac{1}{2}}(\cC_{\tau})$ as in \eqref{ibg3} satisfying the $C^{s-1}$ compatibility conditions. 
Then there exists $\tau' > 0$, $\rho_0' > 0$ so that there is a boundary vacuum development $(\cU,g)$ for the subset $P'=P_{\rho_0'}\cup \cC_{\tau'}\subset P$ with
\be \label{regs11}
g \in H^{s}(\cU),
\ee
and $g$ has trace on $\Upsilon\cap\cU$ in $H^s(\Upsilon\cap\cU)$ realizing the conditions in Definition \ref{vac_dep}. 

Two boundary vacuum developments of the same partial initial-boundary data $(P,\mathbb I,\bB)$ are isometric, in an ST-corner neighborhood in $M$. 
\end{theorem}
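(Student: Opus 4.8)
The plan is to deduce this result from the already-established theory for the expanded system $(g,F)$---specifically Theorem~\ref{corner_exist1} and the local geometric uniqueness of \S4---by choosing the free wave-map data so that the gauge field $F$ is forced to coincide with the canonically associated wave map $\f_g$ of Definition~\ref{asso_wave}. In effect, I treat the Dirichlet boundary value of $F$ as a gauge source implicitly fixed by $g$, converting the ${\bf B}_\cC$-theory into a statement about the metric $g$ alone.

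\textbf{Existence.} Given the pure data $(\g,\k,[\s],H)$ on $P$ and the fixed boundary gauge $\Theta_\cC$ obeying \eqref{corner1}, I would manufacture admissible ${\bf B}_\cC$-data for the expanded system \eqref{main2}-\eqref{b2} as follows: set the $F$-initial data to $E_0=E_{\g}$ (the slice map \eqref{c1} determined by $\g=g_S$) and $E_1=T_{g_R}$ (imposing $F_*(N_g)=T_{g_R}$), and pick any diffeomorphism $G:\cC\to\cC_0$ with $G|_\Si=E_0|_\Si$. The corner relations \eqref{cibc3}-\eqref{cibc4} hold because $\Theta_\cC=\ell\,T_{g_R}=\ell\,E_1$ on $\Si_0$ by \eqref{corner1}, and the remaining corner-compatibility follows from the assumed compatibility of the pure data. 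Theorem~\ref{corner_exist1} (the ${\bf B}_\cC$ case) then produces a boundary vacuum development with gauge fields $(\cU,g,F)$, with $g\in H^s(\cU)$ and $\cU=\cD^+(\Upsilon\cap\cU,g)$. The key step is to identify $F$ with $\f_g$: since $V_g=0$ on $\Upsilon\cap\cU$ and $\cU$ is the domain of dependence of its initial-boundary surface, Lemma~\ref{Hvac1} gives $V_g\equiv 0$ on $\cU$, so the term $F_*(V_g)$ drops and $F$ genuinely solves the wave-map equation \eqref{mainf3}. By construction $F|_S=E_0=E_{g_S}$ and $F_*(N_g)=E_1=T_{g_R}$ realize \eqref{if3}, while $F=G$ (hence $r_0\circ F=0$) together with $F_*(T^c_g+\nu_g)^T=\Theta_\cC$ from \eqref{b2} realize \eqref{bf3}. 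Thus $F$ solves the full system \eqref{mainf3}-\eqref{bf3}, and uniqueness of the associated wave map forces $F=\f_g$. Substituting $g_F=(\f_g^{-1})^*g$ into the boundary conditions $[g_F^\intercal]=[\s]$ and $H_{g_F}=H$ turns them into \eqref{bg3}; combined with $\Ric_g=0$, $g_S=\g$, $K_{g|S}=\k$, this shows $(\cU,g)$ is a boundary vacuum development for $(\mathbb{I},\bB)$ over a subset $P'=P_{\rho_0'}\cup\cC_{\tau'}$ in the sense of Definition~\ref{vac_dep}.

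\textbf{Uniqueness.} For two boundary vacuum developments $(\cU_1,g_1)$, $(\cU_2,g_2)$ of the same $(P,\mathbb{I},\bB)$, I would form the associated wave maps $\f_{g_1},\f_{g_2}$ and note, reversing the argument above, that each $(\cU_i,g_i,\f_{g_i})$ is a boundary vacuum development with gauge fields for ${\bf B}_\cC$-data $(\g,\k,E_{\g},T_{g_R},G_i,[\s],H,\Theta_\cC)$ with $G_i=\f_{g_i}|_\cC$. These data sets differ only in $G_i$, and they are equivalent in the sense of \eqref{equiv2}: define $\psi\in\mathrm{Diff}(\Upsilon)$ by $\psi|_S=\mathrm{Id}$ and $\psi|_\cC=G_2^{-1}\circ G_1$. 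This is well defined and matches along $\Si$ because $G_i|_\Si=\f_{g_i}|_\Si=\Phi_{\g_\Si}$ depends only on the common corner metric $\g_\Si$, so $\psi|_\cC|_\Si=\mathrm{Id}$; and $\psi^*$ carries the second data set to the first by \eqref{dif}. Extending $\psi$ to $\Psi_1\in\mathrm{Diff}'(\cU_2)$, the triple $(\Psi_1^*g_2,\Psi_1^*\f_{g_2})$ solves the expanded system with precisely the data of $(g_1,\f_{g_1})$, so the uniqueness half of Theorem~\ref{corner_exist1} (equivalently, gluing the local Proposition~\ref{unique2}) yields $\Psi_2\in\mathrm{Diff}_0$ with $\Psi_2^*\Psi_1^*g_2=g_1$ near $\Si$. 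Then $\Psi=\Psi_1\circ\Psi_2$ gives the desired isometry $\Psi^*g_2=g_1$, which by equivariance \eqref{equif} also intertwines $\f_{g_1}$ and $\f_{g_2}$.

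\textbf{Main obstacle.} The genuinely new point, beyond invoking the expanded theory, is the identification $F=\f_g$, and it is also where the argument is most delicate. It rests on two facts working together: that $V_g$ vanishes throughout the domain of dependence (Lemma~\ref{Hvac1}), so that the Dirichlet-gauged field $F$ is in fact a wave map, and that the Sommerfeld-type conditions defining $\f_g$ in \eqref{bf3} are exactly the $\Theta_\cC$-condition already built into \eqref{b2}, so that the uniqueness of $\f_g$ can be applied. One must further argue that the Dirichlet value $G$ acts purely as a gauge choice---different admissible $G$ yield isometric metrics whose canonical wave maps are intertwined by the isometry through \eqref{equif}---so that the geometric conclusion is independent of $G$. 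Keeping the corner-compatibility bookkeeping \eqref{cibc3}-\eqref{cibc4} consistent across the reduction to and from the expanded system is the most technical routine part.
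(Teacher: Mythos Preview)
Your existence argument is essentially identical to the paper's: both manufacture ${\bf B}_\cC$-data by setting $E_0=E_\gamma$, $E_1=T_{g_R}$, choosing an arbitrary compatible $G$, invoking Theorem~\ref{corner_exist1}, and then recognizing that the resulting $F$ coincides with the associated wave map $\f_g$. (A small redundancy: once you cite Theorem~\ref{corner_exist1}, the pair $(g,F)$ already solves the ungauged system \eqref{main2}, so $V_g\equiv 0$ is built in and you need not re-invoke Lemma~\ref{Hvac1}.)

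For uniqueness you take a slightly different but equivalent route. The paper pushes both metrics forward to the target $M_0$ via their associated wave maps, obtaining two triples $(\mathring\cU,g_{\f_i},\mathrm{Id}_{\mathring\cU})$ with \emph{identical} data $\mathring{\bf I},\mathring{\bf B}_\cC$ (in particular $G=\mathrm{Id}_{\cC_0}$); the uniqueness half of Theorem~\ref{corner_exist1} then forces the comparison diffeomorphism to be the identity on $\mathring\cU$, giving directly $(\f_{g_1}^{-1}\circ\f_{g_2})^*g_1=g_2$. You instead stay on $M$, compare data sets differing only in $G_i=\f_{g_i}|_\cC$, and pass through the equivalence relation \eqref{equiv2} via an auxiliary extension $\Psi_1$ of $G_2^{-1}\circ G_1$. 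Both arguments are correct and yield the same isometry; the paper's push-forward avoids the extension step and makes the explicit form $\Psi=\f_{g_2}^{-1}\circ\f_{g_1}$ immediate, while your version stays closer to the Corollary~\ref{unique3} template.
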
 

\begin{proof}
To show existence of a vacuum development $(\cU,g)$, we expand the initial-boundary data $\big(\mathbb I=(\g,\k),\bB=([\s],H)\big)$ to 
\begin{align}\label{eib}
{\bf I}=\big(\g,\k,E_0=E_{\g}, E_1=T_{g_R}\big),~{\bf B}_\cC=\big(G,[\s],H,\Theta_{\cC}\big),
\end{align}
which can be considered as the initial-boundary data for the coupled system \eqref{main2}-\eqref{b2}. {Here $E_\g$ is the map generated by $\g$ as in \eqref{c1};} $G$ is an arbitrary diffeomorphism $G:\cC\to\cC_0$ satisfying the compatibility conditions. Then Theorem 5.2 shows that there is a boundary vacuum development $(g,F)$ for some sub-data $(P',{\bf I,B}_\cC)$. Observe here, based on how it is constructed, the wave map $F$ must be equal to the unique diffeomorphism $\f_g$ determined by \eqref{mainf3}-\eqref{bf3}. It then follows that the so obtained $(\cU,g)$ is a boundary vacuum development of $(P',{\bI,\bB})$.

Suppose $(\cU_1,g_1)$ and $(\cU_2,g_2)$ are two boundary vacuum developments of the same partial initial-boundary data $(P,{\mathbb I}, \bB)$. We can consider the pull-back metrics $g_{\f_1}=(\f_{g_1}^{-1})^*g_1$ and $g_{\f_2}=(\f_{g_2}^{-1})^*g_2$. Let $\mathring{\cU}=\f_{g_1}(\cU_1)\cap\f_{g_2}(\cU_2)\subset M_0$. Then the triples $(\mathring{\cU},g_{\f_1},{\rm Id}_{\mathring{\cU}})$ and $(\mathring{\cU},g_{\f_2},{\rm Id}_{\mathring{\cU}})$ are both vacuum developments with gauge fields of some common subset $P'\subset (P,\mathring{\bf I},\mathring{\bf B}_\cC)$ where 
\begin{align*}
\mathring{\bf I}=\big( (E_{\g}^{-1})^*\g,(E_{\g}^{-1})^*\k,E_0={\rm Id}_{S_0}, E_1=T_{g_R}\big),~\mathring{\bf B}_\cC=\big(G={\rm Id}_{\cC_0},[\s],H,\Theta_{\cC}\big).
\end{align*}
By the uniqueness result in Theorem \ref{corner_exist1}, there exists a diffeomorphism $\Psi\in {\rm Diff}_0(\mathring{\cU})$ (shrinking $\mathring{\cU}$ if necessary) such that $\Psi^*g_{\f_1}=g_{\f_2}$ and ${\rm Id}_{\mathring{\cU}}\circ\Psi={\rm Id}_{\mathring{\cU}}$. Therefore, $g_1$ and $g_2$ are equivalent -- in fact they are related by the unique diffeomorphisms determined by \eqref{mainf3}-\eqref{bf3}, i.e. $(\f_{g_1}^{-1}\circ\f_{g_2})^*g_1=g_2$ on $\phi_{g_2}^{-1}(\mathring{\cU})$.
\end{proof}


Next we define the vacuum development of global initial-boundary data $(\Upsilon,\mathbb{I},\bB)$ on $M$.
\begin{definition}\label{vac_dev}
A \textit{vacuum development} of the initial-boundary data $(\Upsilon,\mathbb{I},\bB)$ is an {ST-neighborhood}  $\cT\subset M$ such that $\{p\in M: t(p)<\tau\}\subset \cT$ for some time function $t$ and some $\tau>0$, equipped with a Lorentz metric $g$ such that:
\begin{enumerate}
\item $g$ is Ricci-flat in $\cT$.
\item The initial surface $S$ is spacelike and the boundary surface $\cC\cap \cT$ is timelike with respect to $g$. In addition, $\cT=\cD^+(\Upsilon\cap\cT,g)$ i.e. $\cT$ is the future domain of dependence of the initial-boundary surface $\Upsilon\cap\cT$ in $(\cT,g)$.
\item The unique associated gauge $\f_g$ for $g$ via \eqref{mainf3}-\eqref{bf3} is a diffeomorphism from an ST-corner neighborhood $\cU$ covering $\cC\cap\cT$ onto its image in $M_0$.
\item $(\cT,g)$ satisfies the initial and boundary conditions in \eqref{ig3}-\eqref{bg3} with the given initial-boundary data $\mathbb I$ on $S$ and $\bB$ restricted to $\phi_g(\cC\cap\cT)\cap \cC_0$.
\end{enumerate}
\end{definition}
We note that based on the definition above, the boundary $\cC\cap\cT$ must contain a subset $(\cC_{\phi_g})_{\tau}=\{p\in\cC:t_0\circ\phi_g (p)\in [0,\tau)\}$ for some $\tau>0$. 

Combining Theorem \ref{corner_exist2} with the solution of the Cauchy problem gives the following result, which is a more precise version of Theorems \ref{exist},\ref{geom_unique}.

\begin{theorem}\label{exist_unique}
Let $(\Upsilon, {\mathbb I}, {\bB})$ be an initial-boundary data set satisfying the assumptions of Theorem \ref{corner_exist2}. Then $(\Upsilon, {\mathbb I}, {\bB})$ admits a vacuum development, i.e.~there exists a pair $(\cT, g)$ such that 
\be \label{regs2}
g \in H^{s}(\cT),
\ee
and with trace on $\Upsilon\cap\cT$ in $H^s(\Upsilon\cap\cT)$ realizing the conditions in Definition 5.6. 

Moreover, vacuum developments of equivalent initial-boundary data are equivalent. In detail, if $(\cT_1,g_1)$ and $(\cT_2,g_2)$ are vacuum developments of $(\Upsilon, {\mathbb I}_1, {\bB}_1)$ and $(\Upsilon, {\mathbb I}_2, {\bB}_2)$ respectively, and $( {\mathbb I}_1, {\bB}_1),( {\mathbb I}_2, {\bB}_2)$ are equivalent as in Definition \ref{equiv_IB}, i.e. there exists a diffeomorphism $\psi\in{\rm Diff}'(S)$ such that 
\begin{equation*}
(\g_1,\k_1)=(\psi^*\g_2,\psi^*\k_2)\ \ \mbox{on }S,
\end{equation*}
and a subdomain $\cC_{0\tau}=\{x\in\cC_0:t_0(x)<\tau\}~(\tau>0)$ in $\cC_0$ such that 
\begin{equation*}
([\s_1],H_1)=([\s_2],H_2)\ \ \mbox{on }\cC_{0\tau},
\end{equation*}
then there are subdomains $\cT'_1 \subset \cT_1$ and $\cT'_2 \subset \cT_2$, with $\cT'_i\supset \{p\in\cT_i: t(p)<\tau_i\}$ for some $\tau_i>0$ ($i=1,2$), such that 
$$\Psi^*g_2=g_1$$
for some diffeomorphism $\Psi: \cT'_1 \to \cT'_2$. In addition, $\Psi|_{S}=\psi$ and $\Psi|_{\cU} = \f_{g_2}^{-1}\circ\f_{g_1}|_{\cU}$ where $\cU$ is a neighborhood of $\cC\cap\cT'_1$ and $\f_{g_1},\f_{g_2}$ are the associated gauge for $g_1,g_2$ (in the boundary gauge $\Theta_{\cC}$) on $\cU$ and $\Psi(\cU)$ respectively.
\end{theorem}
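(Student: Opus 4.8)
The plan is to build the vacuum development by combining the boundary solution from Theorem~\ref{corner_exist2} with the maximal Cauchy development in the interior, and then to deduce the equivalence statement from the uniqueness assertions in Theorem~\ref{corner_exist1} together with the equivariance property \eqref{equif}.

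\textbf{Existence.} First I would apply Theorem~\ref{corner_exist2} to the partial data set $(P,\mathbb{I},\mathbb{B})$ to obtain a boundary vacuum development $(\cU,g)$ on a product neighborhood $\cU\cong[0,\rho_0')\times \cC_{\tau'}$ of the boundary. Independently, on the open manifold $\mathring S=S\setminus\Si$ the initial data $(\g,\k)$ determine, by Choquet-Bruhat and Geroch \cite{Choquet-Geroch:1969}, a maximal globally hyperbolic vacuum development $(M_{\mathring S},g)$. The two solutions must be glued along the collar. On the spacetime region lying over the collar $P_{\rho_0'}\setminus\Si$ and at positive distance from $\cC$, the restriction of the boundary metric $g|_{\cU}$ solves a pure Cauchy problem with the same initial data $(\g,\k)$; hence by uniqueness of the Cauchy problem it is isometric to the corresponding region of $(M_{\mathring S},g)$. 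Using this isometry as an identification I would patch the interior development onto $\cU$, obtaining a Lorentz metric on a subdomain $\cT\subset M$ with $\{t<\tau\}\subset\cT$; passing to the domain of dependence of $\Upsilon\cap\cT$ then yields all the conditions of Definition~\ref{vac_dev}, with $\f_g$ the diffeomorphism supplied by the boundary development.

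\textbf{Equivalence, boundary region.} For the uniqueness part I would first reduce, using equivariance, to the uniqueness in Theorem~\ref{corner_exist1}. Push each solution to the target via its associated wave map, setting $g_{\f_i}=(\f_{g_i}^{-1})^*g_i$ on $M_0$. The crucial observation is that the pushed-forward initial data agree: from $\g_1=\psi^*\g_2$ with $\psi\in{\rm Diff}'(S)$ and the equivariance \eqref{equi} of $E_{\g}$, namely $E_{\g_1}=E_{\psi^*\g_2}=E_{\g_2}\circ\psi$, one computes
\[
(E_{\g_1}^{-1})^*\g_1=(E_{\g_2}^{-1})^*(\psi^{-1})^*\psi^*\g_2=(E_{\g_2}^{-1})^*\g_2,
\]
and likewise for $\k$; meanwhile the hypotheses give $[\s_1]=[\s_2]$ and $H_1=H_2$ on $\cC_{0\tau}$. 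Thus $(\mathring{\cU},g_{\f_1},{\rm Id})$ and $(\mathring{\cU},g_{\f_2},{\rm Id})$ are boundary vacuum developments with gauge fields of one and the same partial data on $M_0$, so the uniqueness in Theorem~\ref{corner_exist1} forces $g_{\f_1}=g_{\f_2}$ near $\Si_0$. Pulling back gives $(\f_{g_2}^{-1}\circ\f_{g_1})^*g_2=g_1$ on a neighborhood $\cU$ of $\cC$, and since $\f_{g_1}|_S=E_{\g_1}=E_{\g_2}\circ\psi=\f_{g_2}|_S\circ\psi$ this boundary isometry restricts to $\psi$ on $S\cap\cU$.

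\textbf{Equivalence, interior and gluing.} Away from $\cC$, the geometric uniqueness of the Cauchy problem \cite{Choquet-Geroch:1969} provides an isometry of the interior developments extending $\psi$. It remains to check that this interior isometry and the boundary isometry $\f_{g_2}^{-1}\circ\f_{g_1}$ agree on the overlapping collar, so that they patch to a single diffeomorphism $\Psi\colon\cT'_1\to\cT'_2$. Both maps are isometries $g_1\to g_2$ restricting to $\psi$ on $S$; composing one with the inverse of the other yields an isometry of $(\cT'_1,g_1)$ whose $1$-jet along $S$ is the identity, since it fixes $S$ pointwise via $\psi^{-1}\circ\psi$ and carries the $g_1$-unit normal of $S$ to itself. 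By uniqueness of isometries determined by their $1$-jet on a Cauchy surface in a vacuum spacetime, this composition is the identity, so the two isometries coincide on the overlap and define the required global $\Psi$ with $\Psi|_S=\psi$ and $\Psi|_{\cU}=\f_{g_2}^{-1}\circ\f_{g_1}$. I expect the delicate points to be the bookkeeping of the various domains --- ensuring the glued solution is exactly the domain of dependence of $\Upsilon\cap\cT$ and that the subdomains $\cT'_i$ contain $\{t<\tau_i\}$ --- and, above all, the verification that the boundary and interior isometries have matching $1$-jets along the collar; this last step is what genuinely couples the gauge-dependent boundary construction to the gauge-free interior Cauchy theory.
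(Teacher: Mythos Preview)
Your proposal is correct and follows essentially the same strategy as the paper: build the development by gluing the boundary vacuum development from Theorem~\ref{corner_exist2} to the interior Cauchy development, and for uniqueness combine the boundary isometry coming from the wave maps with the interior isometry from Cauchy uniqueness, checking they match on the overlap via rigidity of isometries with prescribed $1$-jet on a hypersurface.

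The only organizational difference is in the uniqueness step: the paper first extends $\psi$ to a bulk diffeomorphism $\Psi_1$ and replaces $g_2$ by $\widetilde g_2=\Psi_1^*g_2$, thereby reducing to two developments of the \emph{same} data $(\mathbb I_1,\mathbb B_1)$ and invoking the equivariance \eqref{equif} of $\f_g$; the boundary and interior isometries then both restrict to the identity on $S$. You instead push both metrics to the target via $\f_{g_i}$ and use the equivariance \eqref{equi} of $E_\g$ to see directly that $(E_{\g_1}^{-1})^*\g_1=(E_{\g_2}^{-1})^*\g_2$, so the pushed-forward data coincide; your boundary and interior isometries then both restrict to $\psi$ on $S$. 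These are equivalent rearrangements of the same computation---the paper's version makes the overlap argument marginally cleaner (identity on $S$ rather than $\psi$), while yours avoids introducing the auxiliary extension $\Psi_1$. Your overlap argument (auto-isometry of $g_1$ with trivial $1$-jet on $S$ is the identity) is exactly what the paper uses, phrased there as ``both equal the identity on $S$, push forward $T_{g_1}$ to $T_{\widetilde g_2}$, and pull back $\widetilde g_2$ to $g_1$''.
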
 

\begin{proof} By Theorem \ref{corner_exist2} there is a partial initial-boundary data set $(P,{\mathbb I},\bB)$ of $(\Upsilon,{\mathbb I},\bB)$ admitting a boundary vacuum development $(\cU, g)$ defined in an ST-corner neighborhood $\cU$ and unique up to diffeomorphisms equal to the identity on $P\cap \cU$. On the other hand, by the solution to the Cauchy problem for the vacuum Einstein equations, the interior initial data $(S, {\mathbb I}) = (S, \g,\k)$ also admits a vacuum development $(\cU_{\rm int}, g_{\rm int})$, unique up to diffeomorphism in $\mathrm{Diff}_0(\cU_{\rm int})$. {For convenience, we choose $\cU_{\rm int}$ to be the maximal Cauchy development of the initial data and view $\cU_{\rm int} \subset M$.}

By construction ${\cU_{\rm int}}\cap\cU$ is an open neighborhood of $S\cap\cU_{\rm int}$ in $M$. Since $g$ and $g_{\rm int}$ both solve the Ricci-flat equation \eqref{main} in ${\cU_{\rm int}}\cap\cU$ and satisfy the same geometric initial condition with ${\mathbb I}$ on $S\cap\cU_{\rm int}$, there is an open neighborhood $\cV\subset {\cU_{\rm int}}\cap\cU$ covering $S\cap\cU_{\rm int}$ and a diffeomorphism $\psi:\cV\to\cV$ fixing $S\cap\cU_{\rm int}$ such that $g=\psi^*g_{\rm int}$ on $\cV$. By shrinking the open sets $\cU$ and $\cU_{\rm int}$, we can assume $\cU\cap\cU_{\rm int}=\cV$  and then extend $\psi$ to be a diffeomorphism $\cU_{\rm int}\to\cU_{\rm int}$ which fixes $S$. We can then glue $(\cU,g)$ with $\psi^*(\cU_{\rm int},g_{\rm int})$ naturally to obtain $(\cT,g)$ which forms a vacuum development as in Definition \ref{vac_dev}.

Now suppose $(\cT_i,g_i)$, $(i=1,2)$ are two vacuum developments for $(\Upsilon,{\bI}_i, {\bB}_i)$ with ${\bI}_i=(\g_i,\k_i),~\bB_i=([\s_i],H_i)$; and $(\bI_1,\bB_1),(\bI_2,\bB_2)$ are equivalent. After shrinking the domain $\cT_i$ in time, we can assume $[\s_1]=[\s_2],~H_1=H_2$ on $\cC_0\cap \phi_{g_1}(\cT_1)\cap\phi_{g_2}(\cT_2)$. First extend $\psi$ to a diffeomorphism $\Psi_1$ on $M$ and set $\w g_2 = \Psi_1^*g_2$ (well-defined on $\Psi_1^{-1}(\cT_2)$). Then according to \eqref{equif} the unique associated gauge for $\w g_2$ is given by $\f_{\w g_2}=\f_{g_2}\circ\Psi_1$ in some ST-corner neighborhood $\cU$ covering the boundary $\cC\cap\Psi_1^{-1}(\cT_2)$. It is then easy to verify that $(\w \cT_2=\Psi_1^{-1}(\cT_2),~\w g_2)$ is a vacuum development for $(\w\bI_2,\w\bB_2)$ with $\w\bI_2=(\psi^*\g_2,\psi^*\k_2)$ and $\w\bB_2=([\s_2],H_2)$. Based on the equalities above, we obtain $\w\bI_2=\bI_1$ on $S$ and $\w\bB_2=\bB_1$ on $\cC_0\cap \phi_{g_1}(\cT_1)\cap \phi_{\w g_2}(\w\cT_2)$.

Now , $(\cT_1,g_1)$ and $(\w\cT_2,\w g_2)$ are two vacuum developments of the same initial-boundary data. Then by Theorem \ref{corner_exist2} there is an ST-corner neighborhood $\cU$ so that $(\f_{\w g_2}^{-1}\circ \f_{g_1})^*\w g_2=g_1$ in $\cU$. By standard uniqueness results in the solution of the Cauchy problem on $(S, {\mathbb I})$, there is a neighborhood $\cU_{S}$ of the initial surface $S$ and a diffeomorphism $\Psi_2$ fixing $S$ such that $\Psi_2^*\w g_2=g_1$ on $\cU_S$. Observe that in the overlap $\cU\cap\cU_{S}$, the maps $(\f_{\w g_2}^{-1}\circ \f_{g_1})$ and $\Psi_2$ both equal the identity on $S\cap (U\cap\cU_{S})$ and push forward the unit timelike normal vector $T_{g_1}$ to $T_{\w g_2}$. In addition they both pull back the metric $\w g_2$ to $g_1$. It follows that $\f_{\w g_2}^{-1}\circ \f_{g_1}=\Psi_2$ in $\cU\cap\cU_{S}$. The map $\Psi_2$ may thus be naturally extended to a map $\Psi_2: \cT'_1=\cU\cup\cU_S \to M$ which is a diffeomorphism onto its image $\cT'=\Psi_2(\cT_1')$ and which fixes the initial surface $S$. Hence $g_1$ and $\w g_2$ are related by $\Psi_2:\cT'_1\to\cT'$, and it follows naturally that $g_1$ and $g_2$ are related by $\Psi_1\circ\Psi_2:\cT'_1\to\cT'_2$. Moreover, in the neighborhood $\cU$ we have $\Psi_1\circ\Psi_2=\Psi_1\circ\phi_{\w g_2}^{-1}\circ\phi_{g_1}=\phi_{g_1}^{-1}\circ\phi_{g_1}$.
\end{proof}

\begin{remark}\label{G_gF}
{\rm The proof of Theorem 5.7 also holds for the systems $(g, F)$ in \eqref{main}-\eqref{b1} and \eqref{main2}-\eqref{b2}. To see this, note that it is straightforward to extend the existence of a maximal Cauchy development $(M_S, g)$ of initial data $(S, \bI)$ to existence of a maximal Cauchy development $(M_S, g, F)$ where $F$ is a wave map as in \eqref{wave} satisfying initial conditions ${\bf I}$ as in \eqref{I1}. The proof of Theorem 5.7 for triples $(\cT, g, F)$ then proceeds in the same way.
}
\end{remark} 

\medskip

Next we turn to the proof of Theorem \ref{phase_space}; for simplicity, we work only in the $C^{\infty}$ setting. Using the notation of the Introduction, we begin with the first part of Theorem \ref{phase_space}.

\begin{proposition}\label{coner_cmpt} Associated to each smooth section $\Lambda$ of the fibration $\pi:\chi(\cC_0) \to \cJ({\Si_0})$, there is a bijection 
\be \label{mod4}
D_{\Lambda}: \cM \to (\cI \times_c \cB) \times \cJ({\Si_0}).
\ee
\end{proposition}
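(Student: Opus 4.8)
The plan is to construct $D_\Lambda$ explicitly, to check that it descends to the equivalence relations on both sides, and then to read off bijectivity from the existence and geometric uniqueness already established in Theorem \ref{exist_unique}. To define $D_\Lambda$ on a class $[(\cT,g)]\in\cM$: the metric $g$ induces the geometric Cauchy data $(g_S,K_{g|S})=(\g,\k)$ on $S$, hence a point of $\cI$; the corner geometry of $g$ along $\Si$ — the Taylor jet of $g$ at $\Si$, constrained by $\Ric_g=0$ and \eqref{constraint} — together with \eqref{cibc3}, \eqref{cibc4} and their higher-order analogues determines a jet $J\in\cJ^c_{\Si_0}(\cC_0)$, obtained by evaluating the boundary relation $(\f_g)_*(T^c_g+\nu_g)^T=\Theta_\cC$ and its iterated normal derivatives at $\Si_0$; and with $\Theta_\cC:=\Lambda(J)$ (so that $\pi(\Theta_\cC)=J$ by the section property) I solve \eqref{mainf3}-\eqref{bf3} for the associated wave map $\f_g$, a diffeomorphism near $\cC$, and read off $\mathbb{B}=\big([((\f_g^{-1})^*g)^\intercal],H_{(\f_g^{-1})^*g}\big)$ as in \eqref{bg0}, on the relevant portion of $\cC_0$. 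I then set $D_\Lambda([(\cT,g)])=\big([(\mathbb{I},\mathbb{B})],J\big)$. The key structural point is that, although $\mathbb{B}$ depends on the whole field $\Theta_\cC$ and not merely on its jet, the jet $J$ is intrinsic to $g$ (it is extracted from corner geometry alone), so the interior ambiguity of the gauge is resolved once and for all by the section $\Lambda$.

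Next I would verify that $D_\Lambda$ is well defined on equivalence classes. If $g_1=\Psi^*g_2$ with $\psi:=\Psi|_S\in\mathrm{Diff}'(S)$ and $\Psi|_\Si\in\mathrm{Diff}'(\Si)$, then $(\g_1,\k_1)=(\psi^*\g_2,\psi^*\k_2)$, so the $\mathbb{I}$-components agree in $\cI$. The corner jet is isometry-invariant, and since the whole construction is normalized through the uniformization slice via \eqref{equi}, the induced jets on $\cC_0$ agree, $J_1=J_2=:J$; hence both computations use the same gauge $\Lambda(J)$. By the equivariance \eqref{equif}, $\f_{g_1}=\f_{g_2}\circ\Psi$, so
$$(\f_{g_1}^{-1})^*g_1=(\f_{g_2}^{-1})^*(\Psi^{-1})^*\Psi^*g_2=(\f_{g_2}^{-1})^*g_2,$$
giving $\mathbb{B}_1=\mathbb{B}_2$. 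Thus $D_\Lambda$ respects the equivalence of Definition \ref{equiv_IB} on the target.

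Bijectivity then follows from Theorem \ref{exist_unique}. For injectivity, if $D_\Lambda([g_1])=D_\Lambda([g_2])$ then $g_1,g_2$ solve \eqref{maing3}-\eqref{bg3} with equivalent initial-boundary data and the same gauge $\Theta_\cC=\Lambda(J)$, and the uniqueness half of Theorem \ref{exist_unique} furnishes a diffeomorphism relating them near $S$, so $[g_1]=[g_2]$ in $\cM$. For surjectivity, given a representative $(\mathbb{I},\mathbb{B},J)$ satisfying the compatibility conditions I set $\Theta_\cC=\Lambda(J)$ and apply the existence half of Theorem \ref{exist_unique} to produce a vacuum development $(\cT,g)$ realizing $(\mathbb{I},\mathbb{B})$ in this gauge; it then remains only to verify $D_\Lambda([g])=\big([(\mathbb{I},\mathbb{B})],J\big)$, and the one nontrivial point is that the corner jet extracted from $g$ is exactly the prescribed $J$.

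The hard part will be this corner analysis. One must show that, for fixed $(\mathbb{I},\mathbb{B})$, the assignment sending the Taylor jet of a solution $g$ along $\Si$ (subject to $\Ric_g=0$ and the constraints) to the jet of $\Theta_\cC$ at $\Si_0$ is a bijection, with $\Lambda$ serving as a consistent right inverse at the level of jets. At lowest order this is the angle relation \eqref{cibc4}, by which $q(N_g,n_\g)$ fixes $\ell\in(0,2)$ and hence $\Theta_\cC|_{\Si_0}=\ell\,T_{g_R}$ via \eqref{corner1}; at higher orders one repeatedly uses the bulk equations \eqref{maing3} to trade normal derivatives for tangential ones and checks that the induced linear maps on successive jet strata are invertible and compatible with the corner conditions defining $\cJ^c_{\Si_0}(\cC_0)$. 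Once this order-by-order correspondence is in place, the consistency needed for surjectivity is immediate, and the remaining assertions are formal consequences of the already-proven well-posedness and geometric uniqueness.
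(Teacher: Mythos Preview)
Your proposal is correct and follows essentially the same architecture as the paper's proof: extract $J(g)$ from the corner compatibility conditions of the wave-map system \eqref{mainf3}--\eqref{bf3}, set $\Theta_\cC=\Lambda(J(g))$, read off $(\mathbb I,\mathbb B)$ via $\f_g$, and invoke Theorem~\ref{exist_unique} for well-definedness and bijectivity. Your explicit check of well-definedness via the equivariance \eqref{equif} is a bit more detailed than the paper's, which simply notes that the corner conditions \eqref{corner2}, \eqref{corner} are diffeomorphism-invariant.

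One small over-complication: in your final paragraph you cast the ``hard part'' as establishing a bijection between Taylor jets of $g$ at $\Si$ and jets of $\Theta_\cC$ at $\Si_0$. The paper does not prove (or need) any such bijection. It simply writes down the hierarchy \eqref{corner}, showing that each $\cL^k_{T^c_{g_R}}\Theta_\cC|_{\Si_0}$ is a polynomial in the corner derivatives of $g$; this \emph{defines} $J(g)$ directly. For surjectivity the consistency $J(g)=J$ is then immediate from the section property $\pi\circ\Lambda=\mathrm{Id}$: the solution $g$ produced with gauge $\Lambda(J)$ has an associated wave map satisfying $(\f_g)_*(T^c_g+\nu_g)^T=\Lambda(J)$ on $\cC$, so evaluating the compatibility relations at $\Si_0$ returns the jet of $\Lambda(J)$, which is $J$. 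No order-by-order invertibility argument is required.
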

\begin{proof}
Note that given a spacetime $(\cT,g)$, the defining system \eqref{mainf3}-\eqref{bf3} for the associated gauge $\phi_g$ is a well-defined IBVP which will admit a unique solution if and only if the initial-boundary data satisfy compatibility conditions along the corner. These conditions define the space $\cJ({\Si_0})$ which we first discuss in more detail. Given this, the map $D_{\Lambda}$ is then  essentially simply an evaluation or restriction map. 

To begin, it is obvious that, in the system \eqref{mainf3}-\eqref{bf3}, the $C^0$ compatibility condition is always satisfied since $E_{g_S}$ maps $\Si$ to $\Si_0$ on the initial surface and $r_0\circ\phi_g=0$ on $\cC$. 

For the $C^1$ compatibility condition, first observe that the boundary condition $r_0\circ\phi_g=0$ is consistent with the initial condition $(\phi_g)_*(N_g)=T_{g_R}$ along the corner, because $\phi_g$ mapping $\cC\cap\cT$ to $\cC_0$ implies that $(\phi_g)_*$ must push forward vectors tangent to $\cC\cap\cT$ to vectors tangent to $\cC_0$. The $C^1$ compatibility condition requires that \eqref{corner1} holds, where $\ell$ is given by (cf. \S 6.3)
\be\label{corner2} 
\ell=1-\frac{g(N_g,n_{g_S})}{\sqrt{1+g(N_g,n_{g_S})^2}}\ \ \mbox{ on }\Si_0.
\ee
Observe that the right side of equation above is a diffeomorphism invariant value, i.e. the value of $\ell$ on $\Si_0$ only depends on the equivalence class of $g$. 

The $C^2$ compatibility condition is given by the wave equation \eqref{mainf3} along the corner. If we choose a local chart $\{x^\a\}~(\a=0,1,2,3)$ near a corner point $p\in\Si_0$ with $x^0=t_0$ and $\nu_{g_R}=\partial_{x^1}$, and choose a local chart $\{y^\mu\}~(\mu=0,1,2,3)$ near the corner point $E_{g_\Si}^{-1}(p)\in\Si$, then from the wave equation 
$\Box_g \f_g + \Gamma(\f_g)g(\nabla \f_g, \nabla \f_g)= 0$
we can derive that 
$$T_g^c(T_g^c(\phi_g^\a))=P^\a(\partial g_{\mu\nu})~\mbox{ on }\Si_0~(\a=0,2,3)$$
where $P^\a(\partial g_{\mu\nu})$ is a linear function in $\partial g_{\mu\nu}$ whose coefficients are given by functions of $g_{\mu\nu},E_{g_S},g_R$.
On the other hand, let $T^c_{g_R}$ be the field of unit normal vectors to level sets $\Si_\tau$ in the ambient manifold $(\cC_0,(g_R)|_{\cC_0})$. Given $\Theta_\cC$ on $\cC_0$, we can calculate the Lie derivative $\cL_{T^c_{g_R}}\Theta_\cC$ and obtain (notice that by definition $\phi_g(T_g^c)=T^c_{g_R}$ on $\Si$):
$$(\cL_{T^c_{g_R}}\Theta_\cC)^\a=\ell T_g^c(T_g^c(\phi_g^\a))+Q^\a(\partial g_{\mu\nu})~\mbox{ on }\Si_0~(\a=0,2,3)$$
where $Q^\a(\partial g_{\mu\nu})$ is a linear function in $\partial g_{\mu\nu}$ with coefficients given by functions of $g_{\mu\nu},E_{g_S},g_R$.
Thus $C^2$ compatibility condition implies:
\be\label{corner4}
(\cL_{T^c_{g_R}}\Theta_\cC)^\a=\ell P^\a(\partial g_{\mu\nu})+Q^\a(\partial g_{\mu\nu})~\mbox{ on }\Si_0~(\a=0,2,3).
\ee
Although expressed in local charts, this is a geometric (i.e. diffeomorphism invariant) corner condition on $g$, because both the wave equation \eqref{mainf3} and the Lie derivative $\cL_{T^c_{g_R}}\Theta_\cC$ are geometric. 
By taking higher order Lie derivatives of $\Theta_\cC$ along $T_{g_R}^c$ and derivatives of the wave equation \eqref{mainf3}, we can obtain higher order derivatives of $\Theta_\cC$ determined by $g$ based on $C^k~(k\geq 2)$ compatibility conditions. For simplicity, we write them in a generalized form of equation \eqref{corner4}:
\be\label{corner}
(\cL^{k}_{T^c_{g_R}}\Theta_\cC)^\a={\bf P}^\a_k(\partial^{k} g,\partial^{k-1} g,...,\partial g)~(\a=0,2,3, k\geq 1)
\ \ \mbox{ on } \Si_0,
\ee
where $\cL^{k}_{T^c_{g_R}}$ is the $k$-fold Lie derivative with respect to $T^c_{g_R}$ and ${\bf P}^\a_k$ is a polynomial whose coefficients are determined by $g_{\mu\nu},E_{g_S},g_R$. This describes the space $\cJ({\Si_0})$. 

Next, let $\Lambda: \cJ({\Si_0}) \to \chi(\cC_0)$ be a section of the fibration $\pi$ so that $\Lambda$ assigns to each jet $J\in\cJ({\Si_0})$ a smooth vector field $\Theta_\cC$ on $\cC_0$ extending $J$. 
Then given a smooth spacetime $(\cT,g)$, we can first identify the $C^{\infty}$ jet $J=J(g)$ in $\cJ({\Si_0})$ whose information at $\Si_0$ is fully determined by the equations \eqref{corner2} and \eqref{corner}. 
Next the map $\Lambda$ is used to obtain a vector field $\Theta_\cC=\Lambda\big(J(g)\big)$ which is then used to construct the {associated gauge} $\phi_g$ for $g$ in the boundary gauge $\Theta_\cC$. The map $D_{\Lambda}$ in \eqref{mod4} is then simply defined by 
$$D_{\Lambda}:~\{(\cT,g)\} \to \big( \{ (g_S,K_{g|_S}),([(\phi_g^{-1})^*g^\intercal],H_{(\phi_g^{-1})^*g}) \},~J(g)\big)$$
where the boundary data  $([(\phi_g^{-1})^*g^\intercal],H_{(\phi_g^{-1})^*g})$ is read off from $(\cT,g)$ via the associated gauge $\phi_g$ constructed as above. Based on the geometric uniqueness result in Theorem \eqref{exist_unique}, $D_\Lambda$ is well-defined.

To define the inverse map $D^{-1}_\Lambda$, for a given element $\big(\{(\bI,\bB)\},J\big)\in  (\cI\times_c\cB)\times \cJ({\Si_0})$, we first use the map $\Lambda$ to obtain a vector field $\Theta_\cC=\Lambda(J)$. This gives then a vacuum development $(\cT,g)$ from $(\bI,\bB,\Theta_\cC)$ via the existence result in Theorem \ref{exist_unique}. Now let $D^{-1}_\Lambda (\{(\bI,\bB)\},J)$ be the equivalence class of $(\cT,g)$ in $\cM$. Again, according to the geometric uniqueness result in Theorem \ref{exist_unique}, $D^{-1}_\Lambda$ is well-defined.
\end{proof}

{Note the the corner angle along $\Si$ in the solution space $(\cT,g)\in D^{-1}_\Lambda (\{(\bI,\bB)\},J)$ is determined by the first component in the jet $J$. In more detail, based on equation \eqref{corner2}, we see that $g(N_g, n_{g_S})=\tfrac{\ell^2}{1-\ell^2}$. So as long as $\ell^2<\frac{1}{2}$, the boundary $\cC\cap\cT$ will be timelike.}

Given a representative element $(\bI,\bB,J)$, the existence of a maximal vacuum development for $\big(\bI,\bB,\Theta_\cC=\Lambda(J)\big)$ will be discussed in detail below. 

Finally, we note that analogs of Theorems \ref{exist} and \ref{phase_space}, with the same proofs, also hold with respect to ${\bf B}$ boundary data, where one replaces the boundary data $([\s], H)$ by $([\s], \eta)$, where $\eta$ is to prescribe the normal component 
$$g_R((\f_g)_*(T_g + \nu_g), \nu_{g_R})=\eta.$$
The proof is the same. 

\subsection{Maximal vacuum developments}
Theorem \ref{exist_unique} is an analog of the situation for vacuum developments of Cauchy data $(S, {\mathbb I})$ on an initial time surface, modulo the non-local nature of the boundary conditions. In particular, it is now natural to consider the existence and uniqueness of a maximal development. We concentrate below on maximal vacuum developments of initial-boundary data $({\mathbb I},\bB)$, {with a fixed choice of $\Theta_\cC$}; the proof in the case of the expanded system $(g,F)$ is essentially the same and slightly simpler. The proof will proceed along the same lines as in \cite{Choquet-Geroch:1969}, following the exposition in \cite{Hawking-Ellis:1973}, cf.~also especially \cite{Ringstrom:2009}, 
(as well as \cite{Sbierski:2016}, \cite{Wong:2013} for related but distinct approaches). 

To begin, as with the Cauchy problem, we pass to the abstract setting and will include both future and past developments. 
In the following discussion, we fix $M_0= \bR\times S$ with a time function $t_0$, and a Riemmanian metric $g_R$ on it.
The boundary data $\bB$ are now defined on $\cC_0 = \bR \times \Si$ (and not $[0, \infty)\times \Si$ as before). As previously, we use 
$S_{\tau}$ and $\Si_{\tau}$ to denote the level sets of $t_0$ on $M_0$ and $\cC_0$. The initial-boundary surface $\Upsilon$ is now given by $S_0 \cup \cC_0$, 
with corner $\Si_0=\{0\}\times \Si$. For simplicity, we work in the $C^{\infty}$ setting in the analysis below. From now on, we use $(M,g)$ to denote abstract vacuum developments defined in the following. 

\begin{definition} \label{avd}
An \textit{(abstract) vacuum development} for the initial-boundary data $(\Upsilon,\mathbb{I},\bB)$ is a manifold $M\cong (-1,1)\times S$ with boundary $M_\cC\cong(-1,1)\times\Si$, equipped with a Lorentz metric $g$ on $M$ such that:
\begin{enumerate}
{\item $g$ is Ricci-flat on $M$.}
\item $(M, g)$ is a globally hyperbolic spacetime with timelike boundary admitting {a Cauchy hypersurface-with-boundary $S$}. 
\item There is an embedding $\iota: \Upsilon' \to M$ of some subdomain $\Upsilon'\subset \Upsilon$ such that $\iota(S_0) = S$, $\iota(\cC_0\cap \Upsilon') =  M_\cC$.
\item $g$ induces $g_S=\g,~K_{g|S}=\k$ with the initial data $(\g,\k)$ on $S$ induced from $\iota(\bI)$; 
\item There is an open neighborhood $\cU$ of $ M_\cC\subset M$ so that the unique associated gauge $\f_g$ for $g$ on $\cU$ is a diffeomorphism onto its image in $M_0$;
and $\big([\big((\phi_g^{-1})^*g\big)^\intercal],H_{(\phi_g^{-1})^*g}\big)=\bB$ on $\cC_0\cap\phi_g(M_\cC)$.
\item $\phi_g^*(t_0)$ is a time function of $(M,g)$ near $M_\cC$ and the timelike boundary $M_\cC =\f_g^{-1}(\{p\in\cC_0:t_0(p)\in(\tau_1,\tau_2)\})$ for some $\tau_1<0,\tau_2>0$.  

\end{enumerate}
\end{definition}

Here we regard $\f_g$ as a map from a neighborhood of $M_\cC$ in the (abstract) manifold $M$ to a neighborhood of $\cC_0$ in the fixed manifold $(M_0,t_0,g_R)$, determined by $g$ via \eqref{mainf3}-\eqref{bf3}. In the following we use $\Upsilon\cap M$ to denote both the subset $\Upsilon'\subset \Upsilon$ and the image $\iota(\Upsilon')\subset M$ which can be identified via the embedding $\iota$. 

Globally hyperbolic manifolds with timelike boundary are defined in the same way as globally hyperbolic manifolds (without boundary) and have the same essential properties, cf.~\cite{Ake-Flores-Sanchez:2021} for a recent analysis. In particular, inextendible timelike curves intersect the Cauchy surface $S$ exactly once and $M$ is diffeomorphic to {$M\cong(-1,1)\times S$.}
Let ${\rm cl}(M)$ denote the closure of the manifold $M$. Note that the boundary $\p [{\rm cl}(M)]$ consists of the timelike boundary $M_\cC\cong (-1,1)\times\Si$ and the \textit{edge} $M_\cE\cong (\{1\}\times S)\cup (\{-1\}\times S)$. By definition $M$ includes its timelike boundary $ M_\cC \subset M$, but not the edge $M_\cE$. Further, one has 
\be \label{dev}  
M = \cD(M_\cC\cup S), 
\ee
where $\cD$ is the full (future and past) domain of dependence. 
Taking the union of both future and past vacuum developments in Theorem \ref{exist_unique} shows that any initial-boundary data set $(\Upsilon, {\mathbb I}, \bB)$ admits an (abstract) vacuum development. 

We now turn to the existence of maximal developments. First we give a precise definition of extension.
\begin{definition}
An \textit{extension} of the vacuum development $(M,g)$ of the initial-boundary data set $(\Upsilon,\mathbb{I},\bB)$ is a development $(M',g')$ of the same initial-boundary data set such that there exists an isometric embedding $\Psi:(M,g)\to (M',g')$ with $\Psi|_{S\cap M}={\rm Id}_{S\cap M'},~\Psi|_{ M_\cC}=\f_{g'}^{-1}\circ\f_g|_{ M_\cC}$.
\end{definition}

\begin{lemma}\label{uni_ext}
The isometric embedding from a vacuum development to its extension is unique.
\end{lemma}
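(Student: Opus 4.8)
The plan is to reduce the uniqueness of the isometric embedding to two ingredients: the equivariance \eqref{equif} of the associated wave map, which will pin the embedding down on an open neighborhood of the timelike boundary, and the classical fact that an isometry is determined by its $1$-jet at a point, which will then propagate this agreement into the interior. So let $\Psi_1,\Psi_2:(M,g)\to (M',g')$ be two isometric embeddings as in the definition of an extension, so that $\Psi_i|_{S}=\mathrm{Id}$ and $\Psi_i|_{M_\cC}=\f_{g'}^{-1}\circ\f_g$ for $i=1,2$. Note that $M\cong(-1,1)\times S$ is connected.

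First I would show that each $\Psi_i$, and hence both, agree on the whole neighborhood $\cU$ of $M_\cC$ on which $\f_g$ is defined. The key point is that $\f_{g'}\circ\Psi_i$ solves the defining IBVP \eqref{mainf3}-\eqref{bf3} for the wave map associated to $g$. Indeed, since $\Psi_i$ is an isometry with $\Psi_i^*g'=g$, the composition $\f_{g'}\circ\Psi_i$ is again a wave map into $(M_0,g_R)$; because $\Psi_i|_S=\mathrm{Id}$ one has $g'_{S}=g_S$ on $S$ and $\Psi_{i*}N_g=N_{g'}$ (using that $\Psi_i$ preserves the time-orientation, which follows from $\Psi_i|_S=\mathrm{Id}$ together with the corner normalization), so $\f_{g'}\circ\Psi_i$ satisfies the initial conditions \eqref{if3}; and because $\Psi_i$ carries $M_\cC$ to $M'_\cC$ and the relevant normals $T^c_g+\nu_g$ to $T^c_{g'}+\nu_{g'}$, it satisfies the boundary conditions \eqref{bf3} with the same fixed gauge $\Theta_\cC$. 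By the uniqueness of the associated wave map, $\f_{g'}\circ\Psi_i=\f_g$ on $\cU$, and since $\f_{g'}$ is a diffeomorphism onto its image this gives $\Psi_i=\f_{g'}^{-1}\circ\f_g$ on all of $\cU$, hence $\Psi_1=\Psi_2$ on $\cU$.

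Next I would propagate this equality into the interior by the standard open--closed argument. Consider
$$A=\{x\in M:\ \Psi_1(x)=\Psi_2(x)\ \text{and}\ d\Psi_1|_x=d\Psi_2|_x\}.$$
This set is closed by continuity and contains the open set $\cU\supset M_\cC$, so it is nonempty. For an interior point $x\in A$ one has $\Psi_i=\exp^{g'}_{\Psi_i(x)}\circ\, d\Psi_i|_x\circ(\exp^{g}_x)^{-1}$ on a geodesically convex neighborhood, because isometries commute with the exponential map; as $\Psi_1,\Psi_2$ share the same $1$-jet at $x$ they coincide on this neighborhood, which therefore lies in $A$. Every boundary point of $M$ lies in $M_\cC\subset\cU\subset\mathrm{int}(A)$, so $A$ is open. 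Since $M$ is connected, $A=M$ and thus $\Psi_1=\Psi_2$, as required.

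The hard part is entirely in Step 1, namely handling the timelike boundary: the naive geodesic argument fails near $M_\cC$ because the boundary is not totally geodesic and geodesics issuing from a boundary point need not remain in $M$. The wave-map equivariance is precisely the device that removes this difficulty, by determining $\Psi_i$ on a full open neighborhood of $M_\cC$ rather than merely along the hypersurface $M_\cC$; this is the same mechanism used in the gluing arguments of Theorems \ref{corner_exist2} and \ref{exist_unique}. The remaining point to check carefully is the verification that $\f_{g'}\circ\Psi_i$ meets the initial and boundary conditions of \eqref{mainf3}-\eqref{bf3}, which rests on $\Psi_i|_S=\mathrm{Id}$, on time-orientation preservation, and on $\Psi_i$ carrying $M_\cC$ to $M'_\cC$ with the correct normals.
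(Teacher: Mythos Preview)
Your argument is correct, but it is a different and more elaborate route than the one in the paper. The paper gives the classical Choquet-Bruhat--Geroch geodesic argument, adapted to include the timelike boundary: from any $p\in M$ one follows an inextendible timelike geodesic until it meets $\Upsilon\cap M=S\cup M_\cC$ at a point $q$; since $\Psi|_S=\mathrm{Id}$ and $\Psi|_{M_\cC}=\f_{g'}^{-1}\circ\f_g$ are both prescribed by the very definition of extension, $\Psi(q)$ and $d\Psi|_q$ (tangential part from the prescribed restriction, normal part from the isometry and the side of $\Upsilon$ on which $M$ lies) are determined, and an isometric embedding carries this geodesic and its proper time to the corresponding geodesic in $M'$, which fixes $\Psi(p)$. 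No wave-map step and no open--closed argument are needed.

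Your proof instead first promotes the equality $\Psi_1=\Psi_2$ on $M_\cC$ to a full open neighborhood $\cU$ via the equivariance \eqref{equif} of $\f_g$, and then runs the standard exponential-map connectedness argument in the interior. This is sound and has the pleasant feature of making the role of the associated wave map explicit; it also neatly sidesteps the boundary issue for the exponential map. However, it is heavier than necessary here: the geodesic-tracing argument already treats $S$ and $M_\cC$ on an equal footing once $\Psi$ is prescribed on both, so the wave-map machinery, while central to the gluing results earlier in the paper, is not actually needed for this lemma.
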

\begin{proof}
The proof is the standard one from \cite{Choquet-Geroch:1969}. Suppose $(M',g')$ is an extension of $(M,g)$ with embedding $\Psi:(M,g)\to (M',g')$. 
Take any point $p \in M$ and let $\s=\s(s)$ be an inextendible timelike geodesic in $M$ starting from $p$. By \eqref{dev}, $\s$ must hit the hypersurface $\Upsilon\cap M$ at a unique point $q$, for which the image $\Psi(q)$ is uniquely determined by $\Psi|_{\Upsilon\cap M}$. The length or proper time $\ell(\s)$ and angle $\a$ between $\tfrac{d}{ds}\s$ and the tangent space of $\Upsilon$ at $q$ uniquely determine $\s$. This data is preserved under an isometric embedding. Since the point $\Psi(p)$ is uniquely determined by this data and $\Psi(q)$, the embedding $\Psi$ is unique. 
\end{proof}

\begin{theorem}\label{max_dev}
Given an initial-boundary data set $(\Upsilon,{\mathbb I},\bB)$, up to isometry there exists a unique maximal vacuum development $(\w M,\w g)$. The development  $(\w M,\w g)$ is an extension of any other vacuum development of $(\Upsilon,{\mathbb I},\bB)$.
\end{theorem}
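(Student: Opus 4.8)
The plan is to run the Choquet-Bruhat--Geroch (CBG) argument, following \cite{Choquet-Geroch:1969} and the exposition in \cite{Hawking-Ellis:1973}, adapted to the present boundary setting where the (non-local) boundary data are read off through the associated wave map $\f_g$. Fix once and for all the boundary gauge $\Theta_\cC$. Let $\cD$ denote the collection of abstract vacuum developments of $(\Upsilon,\mathbb I,\bB)$ in the sense of Definition \ref{avd}, considered up to isometries that fix $S$, preserve the initial data, and respect the wave-map identification $\f_{g'}^{-1}\circ\f_g$ on the timelike boundary. Since every such development is modelled on the fixed manifold $(-1,1)\times S$, the collection $\cD$ is a genuine set. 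I would partially order $\cD$ by extension, writing $(M_1,g_1)\preceq(M_2,g_2)$ when the latter is an extension of the former; by Lemma \ref{uni_ext} the realizing isometric embedding is unique, so composing mutual embeddings yields the identity and $\preceq$ is a bona fide partial order. At least one element of $\cD$ exists by gluing a future and a past development provided by Theorem \ref{exist_unique}.

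The heart of the argument is an \emph{amalgamation} (common-extension) property: any two developments $(M_1,g_1)$, $(M_2,g_2)$ admit a common extension $(M_3,g_3)\in\cD$. To construct it, I would consider the family of isometries $\psi\colon O_1\to O_2$ between open subsets $O_i\subset M_i$ containing $S$ and a neighborhood of the boundary, with $\psi|_S=\mathrm{Id}$, $\psi^*g_2=g_1$, and $\f_{g_2}\circ\psi=\f_{g_1}$ near $M_\cC$. Theorem \ref{exist_unique} produces such a $\psi$ on a neighborhood of $S\cup M_\cC$, while the domain-of-dependence uniqueness underlying Theorems \ref{exist1}--\ref{exist2} and Lemma \ref{Hvac1} shows that any two such isometries coincide on the overlap of their domains; hence a \emph{maximal} $\psi$ exists, defined on the union $O_1^{\max}$ of all such domains. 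One then sets $M_3=M_1\sqcup M_2/\!\sim$, identifying $x\sim\psi(x)$, with the metric induced by $g_1,g_2$.

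The main obstacle is to verify that $M_3$ is again an admissible development. The classical CBG difficulty is \emph{Hausdorffness}: a pair of non-separated points would necessarily sit on the frontier of the identified region $O_1^{\max}$, and I would rule this out exactly as in \cite{Choquet-Geroch:1969}, using global hyperbolicity together with the characterization $M_i=\cD(M_\cC\cup S)$ in \eqref{dev} --- a non-separated pair would allow the maximal isometry $\psi$ to be extended, contradicting maximality. Global hyperbolicity of $M_3$ with Cauchy surface $S$ and timelike boundary, and Ricci-flatness of $g_3$, then pass to the glued object from its two pieces. The genuinely new point, beyond the classical case, is the \emph{non-local} boundary condition: one must check that the associated wave map $\f_{g_3}$ is globally well defined near the boundary of $M_3$ and agrees with each $\f_{g_i}$. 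This follows because $\f_{g_i}$ is determined by $g_i$ alone through the well-posed IBVP \eqref{mainf3}--\eqref{bf3} anchored at $S$ and transforms equivariantly via \eqref{equif}; since $\psi$ was chosen to intertwine the wave maps and $g_3$ restricts to $g_i$ on each piece, the solutions $\f_{g_i}$ glue to a single $\f_{g_3}$, so $\bB$ is read off consistently and $(M_3,g_3)$ satisfies Definition \ref{avd}, extending both factors.

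With amalgamation in hand, the conclusion is routine. Given a chain in $(\cD,\preceq)$, its direct limit along the unique embeddings is globally hyperbolic with Cauchy surface $S$, hence diffeomorphic to $(-1,1)\times S$ after renormalizing the time function $\f_g^*(t_0)$, and forms an upper bound in $\cD$; Zorn's lemma then yields a maximal element $(\w M,\w g)$. Amalgamation shows that $(\w M,\w g)$ extends \emph{every} development: for any $(M,g)$ a common extension $(M',g')$ of $(M,g)$ and $(\w M,\w g)$ must coincide with $\w M$ by maximality, so $M$ embeds in $\w M$. Finally, two maximal developments each extend the other, and by the uniqueness of embeddings in Lemma \ref{uni_ext} the composite embedding is the identity; hence the maximal vacuum development is unique up to isometry.
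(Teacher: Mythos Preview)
Your outline follows the Choquet-Bruhat--Geroch template correctly at the level of logical organization (Zorn's lemma, gluing along a maximal common region, Hausdorffness by contradiction), and the reorganization---proving amalgamation for arbitrary pairs first and then invoking Zorn---is equivalent to the paper's order. However, your Hausdorffness argument has a genuine gap precisely at the place where the boundary version departs from the classical one.

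You write that a non-separated pair ``would allow the maximal isometry $\psi$ to be extended, contradicting maximality,'' and defer to the classical argument. That argument works for a non-Hausdorff point $p$ in the \emph{interior}: one produces a spacelike hypersurface $S_p$ through $p$ with $S_p\setminus\{p\}$ inside the identified region, and then the local uniqueness of Cauchy developments extends the isometry across $p$. But a non-Hausdorff point can equally well lie on the closure of the timelike boundary $\hat M_\cC$---specifically on its edge $\cE(\hat M_\cC)=\hat M_\cE\cap\mathrm{cl}(\hat M_\cC)$---and there the Cauchy argument is unavailable. The paper handles this separately: using condition~(6) of Definition~\ref{avd} one has $\hat M_\cC=\f_{\w g}^{-1}\{t_0\in(\hat\tau_1,\hat\tau_2)\}$, and the claim is that $\hat\tau_2=\min(\w\tau_2,\tau'_2)$, i.e.\ the common region already reaches as far along the boundary as both ambient developments do. If not, every point of $\cE^+(\hat M_\cC)=\f_{\w g}^{-1}(\Si_{\hat\tau_2})$ is non-Hausdorff; a null-geodesic argument then shows that a full tubular neighborhood of the corner $\w\Si_{\hat\tau_2}$ in the spacelike slice $\w S_{\hat\tau_2}$ lies in $\hat M$, and one invokes the \emph{boundary} local uniqueness result, Theorem~\ref{corner_exist2}, to produce a common boundary vacuum development past $\hat\tau_2$, contradicting maximality of $\hat M$. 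Only after this boundary case is disposed of does the classical interior argument from \cite{Ringstrom:2013-2} finish the proof.

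In short, the step you flag as ``genuinely new'' is not the well-definedness of $\f_{g_3}$ (which is indeed automatic from equivariance), but the exclusion of non-Hausdorff points on the timelike boundary, which requires the IBVP uniqueness of Theorem~\ref{corner_exist2} rather than Cauchy uniqueness. Your sketch does not supply this ingredient.
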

\begin{proof} The proof follows closely that in \cite{Choquet-Geroch:1969}, cf.~also \cite{Hawking-Ellis:1973} and \cite{Ringstrom:2013-2}. 

Let $\cM(\Upsilon, {\mathbb I}, \bB)$ be the set of all vacuum developments of a given initial-boundary data set $(\Upsilon, {\mathbb I}, \bB)$. By Theorem \ref{exist_unique}, $\cM(\Upsilon, {\mathbb I}, \bB)$ is nonempty. This set is partially ordered by the extension relation; $M_1 \leq M_2$ if $M_2$ is an extension of $M_1$. 
If $\{M_{n}\}$ is a totally ordered subset, then the uniqueness from Lemma \ref{uni_ext} implies that the union $\cup M_{n}$ is also a vacuum development which is clearly an upper bound for $\{M_{n}\}$. It follows from Zorn's Lemma that $\cM(\Upsilon, {\mathbb I}, \bB)$ has a maximal element $(\w M, \w g)$. Any extension of $(\w M,\w g)$ thus equals $(\w M,\w g)$. 

The main issue is to prove uniqueness. Suppose $(M', g')$ is another vacuum development of $(\Upsilon,{\mathbb I},\bB)$; we need to prove $\w M$ is an extension of $M'$. 

By Theorem \ref{exist_unique}, $\w M$ and $M'$ must be extensions of some common vacuum development which satisfies all the conditions in Definition \ref{avd} and can be embedded into $\w M$ (and $M'$) as an open subset. Consider then the set $C(\w M, M')$ of all such common sub-developments of $\w M$ and $M'$. This set is again partially ordered by the extension relation and hence {\it it} has a maximal element $(\hat M, \hat g)$, with isometric embeddings into $(\w M, \w g)$ and $(M',g')$. Without loss of generality, we can assume $(\hat M,\hat g)$ is an open subset of $(\w M,\w g)$ and  $\psi: (\hat M, \hat g) \to (M', g')$ is an isometric embedding. 

One then forms the disjoint union $\w M \cup M'$ and divides by the equivalence relation 
$$M^{+} = \w M \cup  M' / \sim,$$
where $p\sim p'$ if $ p \in \hat M\subset \w M$ and $p'=\psi(p)$. Thus one is gluing the spaces $\w M$ and $M'$ together along their common isometrically embedded subspace $\hat M$. There is a well-defined vacuum spacetime metric on $M^+$ formed by $\w g$ and $g'$.

The main claim is that $M^{+}$ is Hausdorff. Given this, the spacetime $M^+$ is then a vacuum development of $(\Upsilon, {\mathbb I}, \bB)$ which is an extension of both $\w M$ and $M'$. Since the only extension of $\w M$ is $\w M$ itself, it follows that $\w M=M^+$ and hence $\w M$ is an extension of $M'$. This proves the uniqueness and the fact that any vacuum development has an extension to the maximal development $\w M$. 

The proof of the Hausdorff property is by contradiction. If $M^+$ is not Hausdorff, then non-Hausdorff points of $M^+$ must come from points on the \textit{edge} of $\hat M$, i.e. $\hat M_\cE=\p[{\rm cl}( \hat M)]\setminus \hat M_\cC$, {with $\hat M_\cC$ denoting the timelike boundary $\hat M_\cC\cong (-1,1)\times\Sigma$ of $\hat M$. }
A point $\w p$ is a non-Hausdorff point if $\w p\in \hat M_\cE\cap\w M$ and there is a corresponding point $p'\in\p [{\rm cl}\big(\psi(\hat M)\big)]\cap M'$ such that every neighborhood $U\subset \w M$ of $\w p$ has the property that the closure ${\rm cl}\big(\psi(U)\big)\subset M'$ contains $p'$. Following the same argument as in \cite{Hawking-Ellis:1973}, one sees that for a non-Hausdorff point $\w p\in\w M$ the associated point, denoted as $H(\w p)\in M'$, is unique and the set $\cH$ of all non-Hausdorff points is open in $\hat M_\cE$. Furthermore, if $\w p\in\cH$, then there exist neighborhoods $\w U$ of $\w p$ in $\w M$ and $U'$ of $H(\w p)$ in $M'$ such that $\psi$ maps $\w U\cap \hat M$ to $U'\cap\psi(\hat M)$ and it can be extended to a diffeomorphism $\Psi:\w U\to U'$.

Let $\w M_{\rm int}$ denote the interior of $\w M$. Based on \cite{Choquet-Geroch:1969}, if one can find a point $\w p\in\hat M_\cE\cap\w M_{\rm int}$ and a spacelike hypersurface $S_p$ passing through $\w p$ such that $S_p-\{\w p\}\subset\hat M$, then the maximal common sub-development $\hat M$ can be extended larger by a further common Cauchy development starting from $S_p\subset\w M$ and $\Psi(S_p)\subset M'$, and hence one arrives at a contradiction. This argument will be used frequently below. 

Note that based on the last condition in Definition \ref{vac_dev}, one has  
$\w M_\cC=\f_{\w g}^{-1}(\{p\in\cC_0:t_0(p)\in(\w \tau_1,\w \tau_2)\})$,
$\hat M_\cC=\f_{\w g}^{-1}(\{p\in\cC_0:t_0(p)\in(\hat\tau_1,\hat\tau_2)\})$, and
$ M'_\cC=\f_{g'}^{-1}(\{p\in\cC_0:t_0(p)\in(\tau'_1,\tau'_2)\})$, for some $\w\tau_1,\hat\tau_1,\tau'_1<0$ and $\w\tau_2,\hat\tau_2,\tau'_2>0$.
We first show that $\hat\tau_2=\min(\w\tau_2,\tau'_2)$; the same argument yields $\hat\tau_1=\max(\w\tau_1,\tau'_1)$. This is equivalent to the fact that any point on the edge of $\hat M_\cC$, i.e. $\cE(\hat M_\cC)=\hat M_\cE\cap {\rm cl}(\hat M_\cC)$, cannot be non-Hausdorff. 

If the above is not true, without loss of generality, we can assume $\hat\tau_2<\w\tau_2\leq \tau'_2$. Then obviously every point of $\cE^{+}(\hat M_{\cC})=\phi_{\w g}^{-1}(\{p\in\cC_0:t_0(p)=\hat\tau_2\})$ is non-Hausdorff. Since non-Hausdorff is an open condition, there is an open neighborhood $V_\cH$ of $\cE^{+}(\hat M_\cC)$ in $\hat M_\cE$ where all points are non-Hausdorff, i.e. $V_\cH\subset\cH$. Let $\w t$ denote the time function $\w t=\phi_{\w g}^*(t_0)$. We claim that any point $p\in \big(V_\cH\setminus\cE^+(\hat M_\cC)\big)$ must have $\w t(p)>\hat\tau_2$. If not, there is a point $\w p\in \big(V_\cH\setminus\cE^+(\hat M_\cC)\big)$ with $\w t(\w p)\leq\hat \tau_2$, then consider an inextendible past-directed null geodesic $\w \s$ starting at $\w p$.
It follows from the globally hyperbolic property (cf. \cite{Ringstrom:2013-2}) that there is $a_0>0$ such that $\w \s([0,a_0])\in \cH\cap\hat M_\cE$ and $\w\s$ leaves $\hat M_\cE$ at the end point $\w q=\w\s(a_0)$. Since $\w t(\w q)<\hat \tau_2$, one has  $\w q\in\hat M_\cE\cap \w M_{\rm int}$. Then based on the analysis in \cite{Ringstrom:2013-2}, there is a point $q\in\hat M_\cE\cap\w M_{\rm int}$ close to $\w q$ and a spacelike hypersurface $S_{q}$ containing $q$ such that $S_q-\{q\}\subset\hat M$. This gives the same contradiction as in the argument above.

Hence all points $p\in \big(V_\cH\setminus\cE^+(\hat M_\cC)\big)$ satisfy $\w t(p)>\hat\tau_2$. It follows that a tubular neighborhood $\w U$ of the corner $\w\Si_{\hat \tau_2}=\phi_{\w g}^{-1}(\{p\in \cC_0: t_0(p)=\hat\tau_2\})$ in the spacelike hypersurface $\w S_{\hat\tau_2}=\phi_{\w g}^{-1}(\{p\in M_0: t_0(p)=\hat\tau_2\})$ is contained in $\hat M$, i.e. $\w U\setminus\w\Si_{\hat\tau_2}\subset \hat M$. Using the correspondence between non-Hausdorff points $\w p\in\w M$ and $H(\w p)\in M'$, there is a tubular neighborhood $U'$ of the corner $\Si'_{\hat \tau_2}=\phi_{g'}^{-1}(\{p\in \cC_0: t_0(p)=\hat\tau_2\})$ in the spacelike hypersurface $S'_{\hat\tau_2}=\phi_{g'}^{-1}(\{p\in M_0: t_0(p)=\hat\tau_2\})$ such that $U'\setminus\Si'_{\hat\tau_2}\subset\psi(\hat M)$.
Thus, near the corner $\w\Si_{\hat\tau_2}\subset \w M$ and $\Si'_{\hat\tau_2}\subset M'$, $g$ and $g'$ have equivalent initial data on $\w U$ and $U'$ as well as equivalent boundary data in a neighborhood of $\w\Si_{\hat\tau_2}\subset \w M_\cC$ and $\Si'_{\hat\tau_2}\subset M'_\cC$. By the uniqueness in Theorem \ref{corner_exist2}, there is a common boundary vacuum development $M^*$ of $(\w M, g)$ and $(M',g')$ near the corner $\w\Si_{\hat\tau_2}$ and $\Si'_{\hat\tau_2}$. 
Joining $M^*$ with $\hat M$ gives an extension of $\hat M$ of which both $\w M$ and $M'$ are extensions. 
This contradicts the maximality of $\hat M \in C(\w M, M')$. 

The analysis above proves that all non-Hausdorff points must be in $\hat M_\cE\cap \w M_{\rm int}$. One can now take $\w p \in \cH$ and consider an inextendible past-directed null geodesic starting at $\w p$. Based on the globally hyperbolic property again, $\w p$ must leave $\hat M_\cE$ at some point $\w q$ and moreover, according to \cite{Ringstrom:2013-2}, $\w q \in \cH$ and $\w q\in\hat M_\cE\cap \w M_{\rm int}$. Further analysis in \cite{Ringstrom:2013-2} implies that there is a point $q\in\hat M_\cE\cap\w M$ close to $\w q$ and a spacelike hypersurface $S_q$ passing through $q$ with $S_q-\{q\}\subset\hat M$. This leads to a contradiction again as mentioned at the beginning, which implies that $M^+$ must be Hausdorff. 
\end{proof}

We conclude the paper with a few final remarks.

For initial surface $S \subset \Upsilon$, let $\w M_{S}$ be the unique maximal Cauchy development of $S$. Clearly $\w M_{S} \subset \w M$ for the maximal development $\w M$ constructed in the proof above. The existence of boundary vacuum developments $\cU \subset \w M$ as in Theorem \ref{corner_exist2} implies that in small neighborhoods $\cU$ of $\Si$, $\w M_S \cap \cU$ has a Cauchy horizon in $\cU$. In general, the ``boundary" of $\w M_S$ may be very complicated, consisting of regions where the solution $g$ has (curvature) singularities and is in general not well understood. Thus, the presence of boundary data near $\Si$ has the effect of regulating the metric near the boundary $\Si$. 

Theorem \ref{max_dev} differs from the situation of the maximal Cauchy development $\w M_S$ in that the maximal solution $(\w M, \w g)$ is maximal with respect to the choice of the preferred gauge $\f_g$. To discuss this}, let {$\tau_0 = \sup\{\tau: t_0\circ\phi_{\w g}(p)=\tau \mbox{ for some } p\in \w M_\cC\}$}; $\tau_0$ is the maximal time of existence of the solution $\w M$ at the boundary, measured in the time coordinate $t_0$. The solution $(\w M, \w g)$ may break down or degenerate at $\tau_0$ in two ways. One way is that the metric $\w g$ becomes degenerate so we cannot extend the solution further. 
On the other hand, it may happen that the solution $(\w M, \w g)$ breaks down only because the associated gauge $\f_{\w g}$ becomes degenerate (i.e.~is no longer a diffeomorphism) at $\tau_0$, {possibly causing a breakdown in the foliation $\phi_{\w g}^{-1}(S_\tau)$ for $\w g$ as $\tau \to \tau_0$. In this latter case it may be possible to extend $(\w M,\w g)$ to a larger domain by defining new initial data $E_{\w g_{S'}}$ on a new partial initial data set $S' \subset \w M$ with $\partial S'$ near $\w M_\cC$ and solving, near $S'$, for the vacuum metric $\w g'$ with a new preferred gauge $\f'$ determined by $\w g$ and $E_{\w g_{S'}}$ on $S'$, i.e.~by changing the associated map for $\w g$.  Of course this also requires appropriate changes on the boundary conditions \eqref{bg3} (which remains open). In addition, it may be possible that such an extension need only be done near certain regions of  $ \w M_\cC$. }

\section{Appendix} 
In this section we collect a number of results and formulas (mostly standard) used in the main text. 
\subsection{Boundary Conditions and Energy Estimates}
In this subsection, we summarize the energy estimates for the IBVP for a scalar wave equation on a Minkowski half-space with Sommerfeld, Dirichlet and also Neumann boundary data. These estimates are basically well-known and included for completeness.     

Consider the scalar wave equation 
\be \label{toy2} 
\Box_{g_0}u = \f.
\ee 
on the region ${\bf R} = [0,\infty)\times (\bR^3)^+$ of Minkowski spacetime with standard coordinates $(t, x^i)$. The stress-energy tensor $S$ of $u$ is given by  
$$S = du^{2} - {\tfrac{1}{2}}|du|^{2}g,$$
As is well-known, the symmetric bilinear form $S$ is conserved on-shell, i.e.~if $u$ solves the equation of motion \eqref{toy2}, then 
$$\d S = -\Box_{g_{0}} u du = -\f du,$$
(cf.~\cite{Hawking-Ellis:1973} for example). For any smooth vector field $Z$, one then has  
$$\d(S(Z)) = (\d S)(Z) +\<S, \d^{*}Z\> = - \f Z(u) + \<S, \d^{*}Z\>.$$
Let $U$ be any open domain in ${\bf R}$ with compact closure and piecewise smooth boundary $\p U$. Applying the divergence theorem to the left side then gives 
\be \label{en}
\int_{\p U}S(Z, N) = \int_{U}\<S, \d^{*}Z\> - \f Z(u),
\ee
where $N$ is the outward $g$-unit normal at the boundary. The equation \eqref{en} leads to the basic energy estimates. 

Let 
$$E_{S_t}(u) = {\tfrac{1}{2}}\int_{S_t}u_t^2 + |du|^2,$$
where $du$ is the full spatial derivative. Here and below, integration is with respect to the standard measures. As in the main text, let $S_t$ be the level set of $t$, $\cC_s = \{x^1 = 0\} \cap \{t \in [0,s]\}$, $\Si_t = \cC \cap S_t$ and $M_{s} = \{t \in [0,s]\}$. Also for this section, let $x = x^1$,  and $(x^2, x^3) = (y,z)$.   
  
Consider first $Z = \p_t$. Then $\d^{*}Z = 0$ and one obtains from \eqref{en} 
\be \label{enT}
\frac{d}{dt}E_{S_t}(u) + \int_{S_t}\f u_t = \int_{\Si_t}u_x u_t .
\ee 
For $\f = 0$, this immediately gives the relation 
$$E_{S_t}(u) =  E_{S_0}(u) + \int_{\cC_t}u_xu_t.$$
For general $\f$, since $|\f u_t| \leq \frac{1}{2}(u_t^2 + \f^2)$, one has 
$$E_{S_t}(u) \leq E_{S_0}(u) + \int_{0}^{t}E_{S_s}(u) ds + {\tfrac{1}{2}}\int_{M_t}\f^2 + \int_{\cC_t}u_xu_t.$$
The integral form of the standard Gronwall inequality then gives the bound 
\be \label{gron}
E_{S_t}(u) \leq E_{S_0}(u) + Ce^t[\int_{M_t}\f^2 +  \int_{\cC_t}u_xu_t].
\ee
For data $\f$ of compact support, the factor $e^t$ may be absorbed into the constant $C$. 

Next for $Z = \p_x$ again first with $\f = 0$, \eqref{en} gives  
\be \label{enX}
\frac{d}{dt}\int_{S_t}u_x u_t  =\int_{\Si_t}u_x^2 - {\tfrac{1}{2}}|du|^{2} = {\tfrac{1}{2}}\int_{\Si_t}u_x^2 + u_t^2 - |d_A u|^2.
\ee
Thus  
\be \label{tang}
{\tfrac{1}{2}}\int_{\cC_t}|d_A u|^2 = {\tfrac{1}{2}}\int_{\cC_t} u_x^2 + u_t^2 - \int_{S_t}u_xu_t \leq 
{\tfrac{1}{2}}\int_{\cC_t} u_x^2 + u_t^2 + E_{S_t}(u) + E_{S_0}(u) .
\ee
For the inhomogeneous equation, using \eqref{gron} one obtains in the same way that 
\be \label{tang2}
{\tfrac{1}{2}}\int_{\cC_t}|d_A u|^2 \leq 
{\tfrac{1}{2}}\int_{\cC_t} u_x^2 + u_t^2 + C(E_{S_t}(u) + E_{S_0}(u) + \int_{M_t}\f^2),
\ee
with again $C$ depending only on $t$. 
 
\medskip 

{\bf Sommerfeld Boundary data:} This is boundary data of the form 
\be \label{somm}
u_{t} + u_{x} = b,
\ee
where $b$ is a given function on the boundary cylinder $\cC$. 
Then $u_{x} = b - u_{t}$ so that 
$$\int_{\Si_t}u_{t}u_{x} = -\int_{\Si_t}u_{t}^{2} + \int_{\Si_t}bu_{t}.$$
Since $|bu_t| \leq \frac{1}{2}(u_t^2 + b^2)$, we obtain from \eqref{enT}
\be \label{e1}
\frac{d}{dt}\int_{S_t}u_{t}^{2} + |du|^{2} + \int_{\Si_t}u_{t}^{2} \leq \int_{\Si_t}b^{2},
\ee
giving the basic energy estimate 
$$E_{S_t}(u) + \int_{\cC_t}u_t^2 \leq E_{S_0}(u) + \int_{\cC_t}b^2.$$
To extend this to a strong or boundary stable estimate, note that $u_x^2 \leq 2(u_t^2 + b^2)$, so that 
$u_t^2 +u_x^2 \leq 3u_t^2 + 2b^2$. Substituting this in \eqref{e1} gives 
$$\frac{d}{dt}\int_{S_t}u_{t}^{2} + |du|^{2} + {\tfrac{1}{3}}\int_{\Si_t}u_{t}^{2} + u_x^2 \leq 2\int_{\Si_t}b^{2}.$$
Using the relation \eqref{tang}, one easily derives that 
$$\int_{S_t}u_{t}^{2} + |du|^{2} + {\tfrac{1}{4}}\int_{\cC_t}u_{t}^{2} + u_x^2 + |d_A u|^2 \leq E_{S_0}(t) + 3\int_{\cC_t}b^{2},$$
for solutions $u$ of \eqref{toy2} with $\f = 0$. When $\f \neq 0$, using \eqref{tang2}, the same arguments give 
$$\int_{S_t}u_{t}^{2} + |du|^{2} + {\tfrac{1}{4}}\int_{\cC_t}u_{t}^{2} + u_x^2 + |d_A u|^2 \leq E_{S_0}(u) + C[\int_{\cC_t}b^{2} + 
\int_{M_t}\f^2].$$
As is well-known, this estimate can be promoted to a full energy estimate, i.e.~including the $L^2$ norm of $u$, by noting that if $u$ satisfies \eqref{toy2}, then $v = e^{ct}u$ satisfies 
$$(\Box + c^2)v = \f v + 2cv_t.$$
The same arguments then give an energy estimate for $v$ including the $L^2$ norm, which then translates to a similar energy estimate for $u$. In sum and in the notation of \S 3, this gives the strong or boundary stable $H^1$ estimate
\be \label{e2}
\cE_{S_t}(u) + {\tfrac{1}{2}}\cE_{\cC_t}(u) \leq E_{S_0}(u) + C[\int_{\cC_t}b^2 + \int_{M_t}\f^2],
\ee
for solutions $u$ of \eqref{toy2} with Sommerfeld boundary condition. 

One obtains higher order $H^s$ energy estimates by simple differentiation. Thus, for $i = 0, 2,3$, so $\p_i$ is tangent to the boundary $\cC$, one has, for $u_i = \p_i u$, 
$$\Box_{g_{0}}u_i = \p_i \f,$$
and the boundary condition \eqref{somm} becomes 
$$(u_i)_t + (u_i)_x = \p_i b.$$
It follows that one has the $H^1$ energy estimate for each $u_i$, given $H^1$ control on $b$ and $\f$. For the normal derivative $u_x$, the bulk equation \eqref{toy2} gives $u_{xx} = \Box_{\cC} u - \f$. The term $\Box_{\cC} u$ is bounded in $L^2$ by the estimate above giving then an $L^2$ bound on $u_{xx}$, which gives then a full $H^2$ energy estimate. One continues in this way inductively for each $s$.

\medskip 

{\bf Dirichlet Boundary Data:} Here 
$$u = b$$ on 
$\cC$. In this context one has 
$$\int_{\Si_t}u_x u_t \leq \e \int_{\Si_t}u_x^2 + \e^{-1}\int_{\Si_t}u_{t}^2 = \e \int_{\Si_t}u_x^2 + \e^{-1}\int_{\Si_t}b_{t}^2,$$
so that to control $E_{S_t}(u)$, it suffices to control the Neumann derivative $u_x$. 
Also, as in \eqref{enX}, we have 
\be \label{tang3}
{\tfrac{1}{2}}\int_{\cC_t} u_x^2 + u_t^2 \leq E_{S_t}(u) + E_{S_0}(u) + {\tfrac{1}{2}}\int_{\cC_t}|d_A u|^2, 
\ee
so that 
\be \label{dn}
{\tfrac{1}{2}}\int_{\cC_t} u_x^2 \leq E_{S_t}(u) + E_{S_0}(u) + {\tfrac{1}{2}}\int_{\cC_t}|d_A b|^2.
\ee
The estimate \eqref{dn} shows that one can control Neumann boundary data of $u$ at $\cC$ in terms of Dirichlet control of $u$ on $\cC$ (and the energy). In other words, consider the Dirichlet-to-Neumann map $\cN(b) = u_x$, where $u$ is the unique solution to the IBVP \eqref{toy2} with Dirichlet boundary data $b$ and zero initial data. Then \eqref{dn} gives an $L^2(\cC)$ bound for $\cN$. This estimate is important for boundary stable energy estimates.  

The same arguments as above then give the energy estimate \eqref{e2} with Dirichlet boundary value $b$, with constants suitably adjusted. Similarly, in the same way as above, one obtains higher order $H^s$ energy estimates.  

\begin{remark}
{\rm An estimate of the form \eqref{dn} with Dirichlet and Neumann data interchanged, i.e.~an estimate of the form
$$\int_{\cC_t} |du|^2 \leq C[E_{S_t}(u) + E_{S_0}(u) + \int_{\cC_t} u_x^2]$$
does not hold, i.e.~Dirichlet data cannot be controlled by Neumann data at the same level of differentiability. There is a definite loss of regularity or diffentiability, cf.~\cite{Tataru:1998} for a detailed analysis. 

We note that in proving the well-posedness of the IBVP of quasi-linear systems such as those in \eqref{main3}-\eqref{b3}, it is important to have boundary stable energy estimates as in \eqref{e2}. 

}
\end{remark}

\subsection{Linearization Formulas}
In this subsection, for convenience we derive the formulas \eqref{trK}-\eqref{trtA} and \eqref{hamlin1}-\eqref{hamlin2}. Considering linearizations at the standard configuration $({\bf R}, \eta)$, the linearization of the normal vectors $T$ (normal to $S$) and $\nu$ (normal to $\cC$) are 
\begin{equation*}
\begin{split}
T'_h={\tfrac{1}{2}}h_{00}\partial_0-h_{0i}\partial_i, \ \ \nu'_h=h_{10}\partial_0-{\tfrac{1}{2}}h_{11}\partial_1-h_{1A}\partial_A.
\end{split}
\end{equation*}

For the second fundamental form $K=K_{g|S}$, one has $2K = \cL_T g$, so that $2K'_h = \cL_T h + \cL_{T'}g$. This gives 
$$2K'_h = \nabla_T h + dh_{00}\cdot dt_0 - 2dh_{0i}dx_0^i.$$
Taking the trace with respect to $\eta$ then gives \eqref{trK} as well as \eqref{trtK}. Replacing $T$ by $\nu$, similar computation gives \eqref{trA}-\eqref{trtA}. 

Next, the Hamiltonian constraint (Gauss equation) for a vacuum solution $Ric_g = 0$ on the timelike boundary $\cC$ is 
\be \label{Ham1}
\begin{split}
R_{\mathcal C}-(\tr_{\cC}K_{g|\cC})^2+|K_{g|\cC}|^2=0, 
\end{split}
\ee
where $K_{g|\cC}$ is the second fundamental form of $\cC\subset (U,g)$. For a linearization $h$ at the flat metric $\eta$ with $Ric'_h = 0$, it follows that 
\be \label{R=0}
(R_{\cC})'_h=0. 
\ee
It is standard that $(R_{\cC})'_h=-\Box_{\cC}(tr_{\cC}h)+\delta_{\cC}\delta_{\cC} h_{\cC}-g_{\cC}(Ric_{\cC}, h)$, which is computed as follows: 
\begin{equation*}
\begin{split}
(R_{\mathcal C})'_h&=-\Box_{\mathcal C}(-h_{00}+2\tau_h)+\partial_0\partial_0h_{00}-2\partial_0\partial_Ah_{0A}+\partial_A\partial_Bh_{AB}\\
&=\Delta_{\Si_t}h_{00}+\partial_0\partial_0(2\tau_h)-\Delta_{\Si_t}2\tau_h-2\partial_0\partial_Ah_{0A}+\partial_A\partial_Ah_{AA}+O_2\\
&=(\partial_0\partial_0-\partial_1\partial_1)h_{00}+\partial_0\partial_0(2\tau_h)-\Delta_{\Si_t}2\tau_h-2\partial_0X+\Delta_{\Si_t}\tau_h+O_2\\
&=(\partial_0\partial_0-\partial_1\partial_1)h_{00}+(\partial_0\partial_0+\partial_1\partial_1)\tau_h-2\partial_0X+O_2
\end{split}
\end{equation*}
Recall that $\tau_h=\tfrac{1}{2}(h_{22}+h_{33})$. In the above, we have used the facts that $h_{23}=O$, $h_{22} = \tau_h + O$, $h_{33} = \tau_h + O$, $\Box h_{00} = 0$, $\Box \tau_h = 0$, so that for instance 
$\Delta_{\Si_t}\tau_h=\partial_0\partial_0 \tau_h-\partial_1\partial_1 \tau_h$. Thus from \eqref{R=0} we obtain 
\be \label{Hamlin1}
(\partial_0\partial_0-\partial_1\partial_1)h_{00}+(\partial_0\partial_0+\partial_1\partial_1)\tau_h-2\partial_0X=O_2
\ee

Similarly, the Hamiltonian constraint or Gauss equation on the hypersurfaces $S_t = \{t=\text{constant}\}$ gives:
\be \label{Ham2}
R_{S_t}+(\tr_{S_t}K_{g|S_t})^2-|K_{g|S_t}|^2=0
\ee
The same analysis as above then gives 
\be \label{Hamlin2}
-\partial_1\partial_1\tau_h-\partial_0\partial_0\tau_h-\Delta_{\Si_t}h_{11}-2\partial_1\partial_Ah_{0A}=O_2.
\ee

\subsection{Geometry at the corner.}
In this last section, we derive the corner equations in the discussion of compatibility conditions. Let $g$ be a spacetime metric on $M$ with boundary hypersurfaces $S\cup\cC$ and corner $\Si$. Let $p\in\Si$ be an arbitrary corner point. Recall that, at $p$, $T_g^0$ denote the unit timelike normal vector to the initial surface $S\subset (M,g)$; $n_{g_S}$ denote the inward spacelike unit normal vector to $\Si\subset (S,g_S)$; $\nu_g$ is the spacelike outward unit normal to $\cC\subset (M,g)$; and $N_g$ (or $T_g^c$) is the timelike unit normal to $\Si\subset (\cC,g_\cC)$.

Notice that $T_g^0,\nu_g\in{\rm span}\{N_g,n_{g_S}\}$ at $p\in\Si$. Thus 
$$T_g^0=a_1 N_g+b_1 (-n_{g_S}),~\nu_g=a_2N_g+b_2(-n_{g_S})$$ 
for some constants $a_i,b_i~(i=1,2)$ at $p$. Applying the geometric identities $g(T_g^0,n_{g_S})=0,~g(T_g^0,T_g^0)=-1$, we obtain $a_1g(N_g,n_{g_S})-b_1=0$ and $-a_1^2+b_1^2-2a_1b_1g(N_g,n_{g_S})=-1$, which further imply that
$$a_1=\tfrac{1}{\sqrt{1+g(N_g,n_{g_S})^2}},~b_1=\tfrac{g(N_g,n_{g_S})}{\sqrt{1+g(N_g,n_{g_S})^2}}.$$
Applying the same calculation, we can also obtain 
$$a_2=-\tfrac{g(N_g,n_{g_S})}{\sqrt{1+g(N_g,n_{g_S})^2}},~b_2=\tfrac{1}{\sqrt{1+g(N_g,n_{g_S})^2}}.$$
Thus $T_g^0+\nu_g=\l_1N_g+\l_2(-n_{g_S})$ with $\l_1=\tfrac{1-g(N_g,n_{g_S})}{\sqrt{1+g(N_g,n_{g_S})^2}},~\l_2=\tfrac{1+g(N_g,n_{g_S})}{\sqrt{1+g(N_g,n_{g_S})^2}}$. Such linear relation is preserved by push forward via $F_*$, i.e. 
$$F_*(T_g+\nu_g)=\l_1 F_*(N_g)+\l_2F_*(-n_{g_S})~\mbox{ at }p\in\Si,$$ which further results in the compatibility equation \eqref{cib3} and the second equation in \eqref{cib4}.

In addition, since $T_g^c=N_g$ at $p$, we also have $T_g^c+\nu_g=\ell N_g +b_2(-n_{g_S})$ with $\ell=1-\tfrac{g(N_g,n_{g_S})}{\sqrt{1+g(N_g,n_{g_S})^2}}$. Applying the push forward map $F_*$ and taking projection, we obtain 
$$F_*(T_g^c+\nu_g)^T=\ell F_*(N_g)~\mbox{ at }p\in\Si.$$ 
Here we use the assumption that $F_*(n_{g_S})\in{\rm span}\{\nu_{g_R}\}$. The equation above further yields the compatibility equation \eqref{cibc3} and the second equation in \eqref{cibc4}.

\bibliographystyle{amsplain}
\bibliography{Bib2021}

\providecommand{\bysame}{\leavevmode\hbox to3em{\hrulefill}\thinspace}
\providecommand{\MR}{\relax\ifhmode\unskip\space\fi MR }
\providecommand{\MRhref}[2]{%
  \href{http://www.ams.org/mathscinet-getitem?mr=#1}{#2}
}
\providecommand{\href}[2]{#2}
\begin{thebibliography}{10}

\bibitem{Ake-Flores-Sanchez:2021}
Luis Ak\'{e}, Jos\'{e}~L. Flores, and Miguel S\'{a}nchez, \emph{Structure of
  globally hyperbolic spacetimes-with-timelike-boundary}, Rev. Mat. Iberoam.
  \textbf{37} (2021), no.~1, 45--94. \MR{4201406}

\bibitem{An-Anderson:2021}
Zhongshan An and Michael~T. Anderson, \emph{The initial boundary value problem
  and quasi-local {H}amiltonians in general relativity}, Classical Quantum
  Gravity \textbf{38} (2021), no.~15, Paper No. 154001, 28. \MR{4282988}

\bibitem{Anderson:2008}
Michael~T. Anderson, \emph{On boundary value problems for {E}instein metrics},
  Geom. Topol. \textbf{12} (2008), no.~4, 2009--2045. \MR{2431014}

\bibitem{Benzoni-Serre:2007}
Sylvie Benzoni and Denis Serre, \emph{Multi-dimensional hyperbolic partial
  differential equations}, Oxford Mathematical Monographs, Clarendon Press,
  Oxford, 2007.

\bibitem{Choquet-Bruhat:2009}
Yvonne Choquet-Bruhat, \emph{General relativity and the {E}instein equations},
  Oxford Mathematical Monographs, Oxford University Press, Oxford, 2009.
  \MR{2473363}

\bibitem{Choquet-Geroch:1969}
Yvonne Choquet-Bruhat and Robert Geroch, \emph{Global aspects of the {C}auchy
  problem in general relativity}, Comm. Math. Phys. \textbf{14} (1969),
  329--335. \MR{250640}

\bibitem{Enciso-Kamran:2019}
Alberto Enciso and Niky Kamran, \emph{Lorentzian {E}instein metrics with
  prescribed conformal infinity}, J. Differential Geom. \textbf{112} (2019),
  no.~3, 505--554. \MR{3981296}

\bibitem{Choquet-Bruhat:1952}
Y.~Four{\`e}s-Bruhat, \emph{Th\'{e}or{\`e}me d'existence pour certains
  syst{\`e}mes d'\'{e}quations aux d\'{e}riv\'{e}es partielles non
  lin\'{e}aires}, Acta Math. \textbf{88} (1952), 141--225. \MR{53338}

\bibitem{Fournodavlos-Smulevici:2021}
Grigorios Fournodavlos and Jacques Smulevici, \emph{The initial boundary value
  problem for the {E}instein equations with totally geodesic timelike
  boundary}, Comm. Math. Phys. \textbf{385} (2021), no.~3, 1615--1653.
  \MR{4283998}

\bibitem{Fournodavlos-Smulevici:2023}
\bysame, \emph{The initial boundary value problem in general relativity: the
  umbilic case}, Int. Math. Res. Not. (2023), no.~5, 3790--3807.

\bibitem{Friedrich:1995}
Helmut Friedrich, \emph{Einstein equations and conformal structure: existence
  of anti-de {S}itter-type space-times}, J. Geom. Phys. \textbf{17} (1995),
  no.~2, 125--184. \MR{1356135}

\bibitem{Friedrich:2009}
\bysame, \emph{Initial boundary value problems for {E}instein's field equations
  and geometric uniqueness}, Gen. Relativity Gravitation \textbf{41} (2009),
  no.~9, 1947--1966. \MR{2534648}

\bibitem{Friedrich:2021}
\bysame, \emph{Time-like hypersurfaces of prescribed mean extrinsic curvature},
  Classical Quantum Gravity \textbf{38} (2021), no.~15.

\bibitem{Friedrich-Nagy:1999}
Helmut Friedrich and Gabriel Nagy, \emph{The initial boundary value problem for
  {E}instein's vacuum field equation}, Comm. Math. Phys. \textbf{201} (1999),
  no.~3, 619--655. \MR{1685892}

\bibitem{Friedrich-Rendall:2000}
Helmut Friedrich and Alan Rendall, \emph{The {C}auchy problem for the
  {E}instein equations}, Einstein's field equations and their physical
  implications, Lecture Notes in Phys., vol. 540, Springer, Berlin, 2000,
  pp.~127--223. \MR{1765130}

\bibitem{Geba-Grillakis:2017}
Dan-Andrei Geba and Manoussos~G. Grillakis, \emph{An introduction to the theory
  of wave maps and related geometric problems}, World Scientific Publishing Co.
  Pte. Ltd., Hackensack, NJ, 2017. \MR{3585834}

\bibitem{Hawking-Ellis:1973}
S.~W. Hawking and G.~F.~R. Ellis, \emph{The large scale structure of
  space-time}, Cambridge Monographs on Mathematical Physics, No. 1, Cambridge
  University Press, London-New York, 1973. \MR{0424186}

\bibitem{KRSW:2007}
H.-O. Kreiss, O.~Reula, O.~Sarbach, and J.~Winicour, \emph{Well-posed
  initial-boundary value problem for the harmonic {E}instein equations using
  energy estimates}, Classical Quantum Gravity \textbf{24} (2007), no.~23,
  5973--5984. \MR{2371919}

\bibitem{KRSW:2009}
\bysame, \emph{Boundary conditions for coupled quasilinear wave equations with
  application to isolated systems}, Comm. Math. Phys. \textbf{289} (2009),
  no.~3, 1099--1129. \MR{2511662}

\bibitem{Kreiss-Winicour:2014}
H.-O. Kreiss and J.~Winicour, \emph{Geometric boundary data for the
  gravitational field}, Classical Quantum Gravity \textbf{31} (2014), no.~6,
  065004, 19. \MR{3176060}

\bibitem{Mokrane:1987}
Ahmed Mokrane, \emph{Problemes mixtes hyperboliques non lineaires}, PhD Thesis,
  Universite de Rennes I (1987).

\bibitem{Parlongue:2011}
David Parlongue, \emph{Geometric uniqueness for non-vacuum einstein equations
  and applications}, arXiv:1109.0644 (2011).

\bibitem{Planchon-Rodnianski}
F.~Planchon and Igor Rodnianski, \emph{Uniqueness in general relativity},
  unpublished.

\bibitem{Rauch-Massey:1974}
Jeffrey~B. Rauch and Frank~J. Massey, III, \emph{Differentiability of solutions
  to hyperbolic initial-boundary value problems}, Trans. Amer. Math. Soc.
  \textbf{189} (1974), 303--318. \MR{340832}

\bibitem{Ringstrom:2009}
Hans Ringstr\"{o}m, \emph{The {C}auchy problem in general relativity}, ESI
  Lectures in Mathematics and Physics, European Mathematical Society (EMS),
  Z\"{u}rich, 2009. \MR{2527641}

\bibitem{Ringstrom:2013-2}
\bysame, \emph{On the topology and future stability of the universe}, Oxford
  Mathematical Monographs, Oxford University Press, Oxford, 2013. \MR{3186493}

\bibitem{Sarbach-Tiglio:2012}
Olivier Sarbach and Manuel Tiglio, \emph{Continuum and discrete
  initial-boundary value problems and einstein's field equations}, Living Rev.
  Relativ. \textbf{15} (2012), no.~9.

\bibitem{Sbierski:2016}
Jan Sbierski, \emph{On the existence of a maximal {C}auchy development for the
  {E}instein equations: a dezornification}, Ann. Henri Poincar\'{e} \textbf{17}
  (2016), no.~2, 301--329. \MR{3447847}

\bibitem{Tataru:1998}
Daniel Tataru, \emph{On the regularity of boundary traces for the wave
  equation}, Ann. Scuola Norm. Sup. Pisa Cl. Sci. (4) \textbf{26} (1998),
  no.~1, 185--206. \MR{1633000}

\bibitem{Wong:2013}
Willie Wai-Yeung Wong, \emph{A comment on the construction of the maximal
  globally hyperbolic {C}auchy development}, J. Math. Phys. \textbf{54} (2013),
  no.~11, 113511, 8. \MR{3154377}

\end{thebibliography}

\end{document}